\newtheorem{definition}{Definition}[section]
\newtheorem{theorem}{Theorem}[section]
\newtheorem{proposition}{Proposition}[section]
\newtheorem{lemma}{Lemma}[section]
\newtheorem{conjecture}{Conjecture}[section]
\newtheorem{assumption}{Assumption}[section]
\newtheorem*{theorem*}{Theorem}
\newtheorem*{assumption*}{Assumption}
\newtheorem*{conjecture*}{Conjecture}
\theoremstyle{remark}
\newtheorem*{remark*}{Remark}
\numberwithin{equation}{section}
\DeclareMathOperator{\Sel}{Sel}
\DeclareMathOperator{\BK}{BK}
\DeclareMathOperator{\Gal}{Gal}
\DeclareMathOperator{\GL}{GL}
\DeclareMathOperator{\AJ}{AJ}
\DeclareMathOperator{\Tam}{Tam}
\DeclareMathOperator{\Kol}{Kol}
\DeclareMathOperator{\Iw}{Iw}
\DeclareMathOperator{\unr}{unr}
\DeclareMathOperator{\ord}{ord}
\DeclareMathOperator{\f}{f}
\DeclareMathOperator{\divv}{div}
\DeclareMathOperator{\length}{length}
\DeclareMathOperator{\Sym}{Sym}
\DeclareMathOperator{\Aut}{Aut}
\DeclareMathOperator{\Hom}{Hom}
\DeclareMathOperator{\SL}{SL}
\DeclareMathOperator{\loc}{loc}
\DeclareMathOperator{\charr}{char}
\DeclareMathOperator{\Frob}{Frob}
\DeclareMathOperator{\can}{can}
\DeclareMathOperator{\Col}{Col}
\DeclareMathOperator{\CH}{CH}
\DeclareMathOperator{\JL}{JL}
\DeclareMathOperator{\res}{res}
\DeclareMathOperator{\M}{M}
\DeclareMathOperator{\Ann}{Ann}
\DeclareMathOperator{\End}{End}
\DeclareMathOperator{\NS}{NS}
\DeclareMathOperator{\Tr}{Tr}
\DeclareMathOperator{\Nm}{Nm}
\DeclareMathOperator{\cont}{cont}
\DeclareMathOperator{\Bcris}{B_{cris}}
\DeclareMathOperator{\new}{new}
\DeclareMathOperator{\st}{st}
\DeclareMathOperator{\Reg}{Reg}
\DeclareMathOperator{\reg}{reg}
\DeclareMathOperator{\Tors}{Tors}
\DeclareMathOperator{\tors}{tors}
\DeclareMathOperator{\Fitt}{Fitt}
\DeclareMathOperator{\mot}{mot}
\DeclareMathOperator{\Heeg}{Heeg}
\DeclareMathOperator{\N}{N}
\DeclareMathOperator{\intt}{int}
\DeclareMathOperator{\sing}{sing}
\DeclareMathOperator{\Per}{Per}
\newfont{\cyr}{wncyr10 scaled 1100}
\newcommand{\shaf}{\mbox{\cyr{X}}}
\newfont{\smallcyr}{wncyr10 scaled 750}
\subjclass{11F11,14C15}
\keywords{Modular forms, Heegner cycles, Kolyvagin's conjecture, non-ordinary primes}
\begin{document}

\title{Kolyvagin's conjecture for modular forms at non-ordinary primes}
\author{Enrico Da Ronche}
\thanks{The author is partially supported by the GNSAGA group of INdAM}

\begin{abstract}
In this article we prove a version of Kolyvagin's conjecture for modular forms at non-ordinary primes. In particular, we generalize the work of Wang on a converse to a higher weight Gross--Zagier--Kolyvagin theorem in order to prove the conjecture under the hypothesis that some Selmer group has rank one. The main ingredients that we use in non-ordinary setting are the signed Selmer groups introduced by Lei, Loeffler and Zerbes. We will also use a result of Wan, i.e., the $p$-part of the Tamagawa number conjecture for non-ordinary modular forms with analytic rank zero. Starting from the rank one case we will show how to prove the full version of the conjecture.
\end{abstract}

\address{Dipartimento di Matematica, Universit\`a di Genova, Via Dodecaneso 35, 16146 Genova, Italy}
\email{enrico.daronche@edu.unige.it}

\maketitle

\tableofcontents

\section{Introduction}
In \cite{K}, Kolyvagin built an Euler system of cohomology classes attached to an elliptic curve. To be more precise, let $E/\mathbb{Q}$ be an elliptic curve of conductor $N$, let $p \nmid N$ be a rational prime and let $K$ be an imaginary quadratic field such that the primes dividing $N$ split in $K$. Kolyvagin built a system of Heegner points $y_c$ in $E(K_c)$ (where $K_c$ is the ring class field of $K$ of conductor $c$) for any integer $c \geq 1$ coprime with $N$ and he used them to build Kolyvagin classes
$$\kappa_E(M,c) \in H^1(K,E[p^M])$$
for certain integers $M \geq 1$ and $c \geq 1$.
We also denote by $\kappa_E$ the set of Kolyvagin classes. Then, Kolyvagin's conjecture for elliptic curves (more generally, for modular forms of weight $2$) states that $\kappa_E \neq \{0\}$. The conjecture for ordinary primes was proved by Zhang in \cite{Z}. It is possible to generalize the construction of Kolyvagin classes, and so the Kolyvagin's conjecture, to the case of higher weight newforms. In order to do that it is necessary to build Heegner cycles on Kuga--Sato varieties over modular curves (following the approach of Nekov\'a\v{r}, see \cite{N}) and Shimura curves (following the approach of Besser, see \cite{Bes}). In \cite{W}, Wang proves it in the ordinary setting assuming that some residual Selmer group has rank one and following part of the reasoning used by Zhang in weight $2$. We are going to mimic its approach in order to generalize its result and the Kolyvagin's conjecture for non-ordinary modular forms.

\subsection{Setting} \label{sssec:Setting}

Let $N \geq 3$ and $k \geq 2$ be integers ($k$ even) and let $K$ be an imaginary quadratic field of discriminant $D_K$ such that $(N,D_K)=1$. We write $N=N^+N^-$ where the prime factors of $N^+$ split in $K$ and the prime factors of $N^-$ are inert in $K$.
Let $f \in S_k(\Gamma_0(N))$ be a newform of level $N$, weight $k$, trivial character and $q$-expansion
$$f(q)=\displaystyle \sum_{n \geq 1} a_n(f)q^n$$
and let $p \nmid 6N$ be a rational prime which splits in $K$. We assume that $f$ is non-ordinary at $p$, i.e., $a_p(f)$ is not a $p$-adic unit. We fix an embedding $\overline{\mathbb{Q}} \hookrightarrow \overline{\mathbb{Q}}_p$ and we denote by $E$ a finite extension of $\mathbb{Q}_p(\{a_n(f)\})$, the $p$-adic Hecke field of $f$, by $\mathcal{O}$ its ring of integers, by $\varpi$ a uniformizer of $\mathcal{O}$, by $\mathbb{F}=\mathcal{O}/\varpi \mathcal{O}$ its residue field and by
$$\rho_f^*:G_\mathbb{Q} \rightarrow \GL_2(E)$$
the Galois representation attached to $f$ by Deligne (i.e., $\det \rho_f^*=\chi_p^{1-k}$ where $\chi_p$ is the $p$-adic cyclotomic character). We are going to work with the self-dual twist $\rho_f:=\rho_f^*(k/2)$. We call $V_f$ the underlying vector space of $\rho_f$ and we fix a $G_\mathbb{Q}$-stable lattice $T_f$ in it. We also set $A_f:=V_f/T_f$. Finally, for any integer $n \geq 1$ we set $T_{f,n}:=T_f/\varpi^n T_f$ and $A_{f,n}:=A_f[\varpi^n]$. We know that $T_{f,n} \cong A_{f,n}$ as $G_\mathbb{Q}$-modules. We denote by
$$\overline{\rho_f}:G_\mathbb{Q} \rightarrow \GL_2(\mathbb{F})$$
the residual representation.

\subsection{Main result}

In \autoref{sec:5} we build Kolyvagin classes
$$\kappa_f(M,c) \in H^1(K,A_{f,M})$$
for certain integers $M \geq 1$ and $c \geq 1$ and we denote by $\kappa_f$ the set of these classes. Then Kolyvagin's conjecture is formulated as in the elliptic curves case.

\begin{conjecture}
$\kappa_f \neq \{0\}$.
\end{conjecture}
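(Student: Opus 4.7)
The plan is to follow the broad strategy pioneered by Zhang in weight two and extended by Wang to ordinary higher-weight forms, replacing each ordinary ingredient with its signed, non-ordinary counterpart. The proof would proceed in two stages. In a first, conditional stage one would show that if an appropriately defined residual Selmer group of $\overline{\rho_f}$ over $K$ has rank one, then $\kappa_f \neq \{0\}$; this matches the hypothesis under which the main theorem is announced in the abstract. In a second stage, the unconditional conjecture is recovered by a Cebotarev-plus-Kolyvagin-derivative argument producing, from any hypothetical global relation among Kolyvagin classes, an auxiliary Kolyvagin prime that effectively reduces the rank of the ambient Selmer group, so that after finitely many steps one is reduced to the rank-one case.

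For the rank-one step, the central technical substitute for the ordinary local condition at $p$ is the pair of signed (plus/minus) Selmer groups of Lei, Loeffler and Zerbes. I would first construct, for the self-dual twist $\rho_f$ and each of the two primes $v \mid p$ of $K$ (two, because $p$ splits), signed local conditions $H^1_\pm(K_v, T_f)$, and assemble them into signed Selmer groups $\Sel^{\pm,\pm}(K, A_f)$. The next task is to show that the local component at a prime above $p$ of the Abel--Jacobi class of the basic Heegner cycle, on the relevant Kuga--Sato or Shimura-curve variety, lies in an appropriate signed subspace, i.e.\ to establish the non-ordinary analogue of the Bloch--Kato local compatibility central to the ordinary computation of $\kappa_f(1,1)$. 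Once this compatibility is in place, proving $\kappa_f(1,1)\neq 0$ reduces to showing that the relevant signed Selmer group is as small as the rank-one hypothesis and the existence of the Heegner class together force it to be.

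The precise arithmetic input producing this smallness is Wan's theorem, namely the $p$-part of the Tamagawa number conjecture for non-ordinary modular forms of analytic rank zero. Applied after a suitable twist reducing the relevant $L$-function to analytic rank zero, it controls the order of a signed Selmer group in terms of a $p$-adic $L$-value; combined with a Gross--Zagier-type formula for Heegner cycles in higher weight and the signed local decomposition, this upgrades to a residual non-vanishing statement and yields $\kappa_f(1,1)\not\equiv 0$ in $H^1(K,A_{f,M})$ for some $M \geq 1$. The second-stage bootstrap is then a fairly formal induction on the corank of the Selmer group via the standard Kolyvagin-system machinery, once the rank-one base case is in hand.

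The hardest part, I expect, is the signed-Selmer half of the first stage. The construction of Lei--Loeffler--Zerbes was originally developed along the cyclotomic $\mathbb{Z}_p$-tower, whereas Heegner cycles naturally live in the anticyclotomic one, so one must carefully combine the two, using crucially that $p$ splits in $K$, to obtain signed local conditions compatible both with the relevant Galois actions and with the Kolyvagin-derivative construction. A subsidiary but equally delicate point is the passage from the integral Iwasawa-theoretic bound produced by Wan's theorem to a residual statement at finite level $M$: this requires careful bookkeeping of local Tamagawa factors and of the discrepancy between the chosen signed local condition at $p$ and the Bloch--Kato one, precisely because in the non-ordinary regime no canonical decomposition of the local Galois representation at $p$ is available.
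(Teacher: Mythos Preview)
Your two-stage outline and your identification of signed Selmer groups and Wan's theorem as the key non-ordinary inputs are correct, but the mechanism you propose for the rank-one step is not the one in the paper and, as described, does not close. The paper does \emph{not} argue via Gross--Zagier applied to $f$. Instead, given a non-zero class $c$ in the residual Selmer group, one chooses an admissible prime $\ell$ with $\loc_\ell(c)\neq 0$ and applies a \emph{level-raising} theorem to obtain a newform $g\in S_k(\Gamma_0(\ell N))$ with $\overline{\rho_g}\cong\overline{\rho_f}$. Because $\ell N^-$ now has an odd number of prime factors, $g$ transfers to a \emph{definite} quaternion algebra, where one has a theta element $\theta_{1,N^+,\ell N^-}(\phi)$ whose square interpolates $L(g/K,k/2)/\Omega_{g,\ell N^-}$. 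A control theorem for the signed Selmer structure $\mathcal{F}(\ell N^-)$, together with a splitting $\Sel_{\mathcal{F}_{\ell N^-}}(K,A_g)=\Sel_{\BK}(\mathbb{Q},A_g)\oplus\Sel_{\BK}(\mathbb{Q},A_{g^K})$, yields vanishing of both summands; Wan's rank-zero $p$-TNC is then applied to $g$ and $g^K$ (not to $f$) to get $v_\varpi\bigl(L(g/K,k/2)/\Omega_{g,\ell N^-}\bigr)=0$. The bridge back to $f$ is Wang's explicit reciprocity law, which identifies $\loc_\ell(\kappa_f(1,1))$ with $\theta_{1,N^+,\ell N^-}(\phi)$ modulo $\varpi$ up to a unit; this directly gives $\kappa_f(1,1)\neq 0$.

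Your proposed route---twist to rank zero, apply Wan, then invoke a Gross--Zagier-type formula---runs into the following difficulty: Gross--Zagier would tell you the Heegner class is non-zero in $H^1(K,V_f)$, but upgrading this to non-vanishing modulo $\varpi$ requires control of the cokernel of the Heegner class inside the integral Selmer group, which is essentially the rank-one $p$-TNC for $f$ itself and is not available as input. The level-raising trick avoids this circularity by passing to a form $g$ of analytic rank zero, where Wan applies unconditionally, and then using the reciprocity law to transport a $\varpi$-adic \emph{unit} statement back to $\loc_\ell(\kappa_f(1,1))$.

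For the higher-rank induction, the paper again relies essentially on level raising rather than on a purely Kolyvagin-system-style argument: one raises the level twice at admissible primes $\ell_1,\ell_2$ to reach a form $h$ of level $\ell_1\ell_2 N$ whose residual Selmer rank has dropped by two, applies the inductive hypothesis to $h$, and then uses a triangulation-of-Selmer-groups argument together with a parity result and a base-locus analysis (itself resting on the reciprocity law) to pull non-triviality of $\kappa_h^{\st}$ back to $\kappa_f^{\st}$. Your description of this step as reducing rank via an auxiliary Kolyvagin prime is too coarse: the rank reduction happens for a \emph{different} modular form, and the return to $f$ is the substantive part.
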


We will assume our data to satisfy the following hypotheses.

\begin{assumption} \label{Ass} ~
\begin{itemize}
\item  $N^+$ is square-free;
\item  $N^-$ is square-free and it consists of an even number of primes;
\item  $p>k+1$ and $\left| (\mathbb{F}_p^\times)^{k-1} \right|>5$;
\item  $\rho_f(G_\mathbb{Q})$ contains $\SL_2(\mathbb{Z}_p)$;
\item  $\overline{\rho_f}$ is ramified at any prime $q|N^+$;
\item $\overline{\rho_f}$ is ramified at any prime $q|N^-$ such that $q^2 \equiv 1 \mod p$;
\item there exists a prime $q|N$ such that $\overline{\rho_f}$ is ramified at $q$;
\item  $\overline{\rho_f}$ is absolutely irreducible when restricted to the absolute Galois group of $\mathbb{Q}_p \left( \sqrt{ p^*} \right)$ where $p^*=(-1)^{\frac{p-1}{2}}p$;
\item there exists $q' | N^+$ such that $a_{q'}(f)=-(q')^{k/2-1}$ or there exist $q_i' | N^-$ for $i=1,2$ such that $a_{q_i'}(f)=(-1)^i (q_i')^{k/2-1}$.
\end{itemize}
\end{assumption}

The first result of our paper is the proof of Kolyvagin's conjecture (and, in particular, non-triviality of $\kappa_f(1,1)$) when the rank of a residual Selmer group defined in \autoref{sec:2} is one.

\begin{theorem}
If Assumption \ref{Ass} is satisfied and $\dim_\mathbb{F}\Sel_{\mathcal{F}(N^-)}(K,A_{f,1})=1$ then $\kappa_f(1,1)$ is not trivial.
\end{theorem}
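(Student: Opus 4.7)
The plan is to argue by contradiction. Assume $\kappa_f(1,1)=0$ and let $s$ be a generator of the one-dimensional residual Selmer group $\Sel_{\mathcal{F}(N^-)}(K,A_{f,1})$. The strategy, mimicking Wang's approach in the ordinary setting, is to produce via a level-raising congruence an auxiliary newform $g$ in analytic rank zero to which Wan's result applies, and then to compare the Kolyvagin/Heegner pictures of $f$ and $g$ via the explicit reciprocity laws, extracting a contradiction.

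First, using the big image hypothesis $\rho_f(G_\mathbb{Q}) \supseteq \SL_2(\mathbb{Z}_p)$ together with the absolute irreducibility of $\overline{\rho_f}$ on $G_{\mathbb{Q}(\sqrt{p^*})}$, I would invoke Chebotarev to find a Kolyvagin prime $\ell$---inert in $K$, prime to $Np$, and with $a_\ell(f)^2\equiv (\ell+1)^2 \pmod{\varpi}$---such that $\loc_\ell(s)$ is non-zero in the singular quotient of $H^1(K_\ell, A_{f,1})$. By higher-weight level raising (in the style of Diamond--Taylor, transferred via Jacquet--Langlands to the Shimura curve of discriminant $N^-\ell$), there is then a newform $g$ of level $N^+N^-\ell$, weight $k$ and trivial character, congruent to $f$ modulo $\varpi$. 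Toggling one inert prime in $N^-$ flips the sign of the functional equation of the base-changed $L$-function over $K$, so under the last bullet of \autoref{Ass} the self-dual twist of $g$ has analytic rank zero over $K$.

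Next I would apply Wan's theorem---the $p$-part of the Tamagawa number conjecture for non-ordinary modular forms in analytic rank zero---to $g/K$. This controls the appropriate signed Selmer group of $A_g$ in terms of the algebraic part of $L(g/K, k/2)$, and in particular forces the mod-$\varpi$ Heegner cycle attached to $g$ on the Shimura curve of discriminant $N^-\ell$ to be non-trivial. The first/second explicit reciprocity law then identifies $\loc_\ell(\kappa_f(1,\ell))$ in the singular part with the finite localization at $\ell$ of this Heegner class, up to a $p$-adic unit. On the other hand, by the Euler system relation together with the assumption $\kappa_f(1,1)=0$ and the rank-one hypothesis, one shows that $\loc_\ell(\kappa_f(1,\ell))$ must vanish modulo $\varpi$, giving the desired contradiction.

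The main obstacle, and the principal novelty over Wang, lies in handling the local conditions at $p$: the relevant Selmer groups are the signed Selmer groups of Lei--Loeffler--Zerbes rather than the Greenberg ones. The technical heart of the argument is thus to verify (i) that Wan's theorem applied to $g$ delivers control of the mod-$\varpi$ signed Selmer group in exactly the form required to guarantee non-triviality of the Heegner cycle, and (ii) that the first/second reciprocity laws relating $\kappa_f(1,\ell)$ to Heegner cycles of $g$ remain compatible with these signed local conditions at $p$. Once these two points are in place the contradiction is formal, and the case analysis dictated by the last bullet of \autoref{Ass} merely determines whether the auxiliary level change is made at $\ell$, at $q'$, or at one of the $q_i'$.
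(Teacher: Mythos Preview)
Your outline has the right ingredients but the logic is inverted at the crucial step, and two objects are misidentified. First, ``toggling one inert prime flips the sign'' only tells you that $L(g/K,s)$ has \emph{even} order of vanishing at $s=k/2$; it does not give analytic rank zero, and the last bullet of Assumption~\ref{Ass} (which concerns Steinberg primes, needed as a hypothesis in Wan's theorem) has nothing to do with this. The paper obtains nonvanishing the other way around: the rank-one hypothesis on $\Sel_{\mathcal{F}(N^-)}(K,A_{f,1})$ together with $\loc_\ell(s)\neq 0$ forces $\Sel_{\mathcal{F}(\ell N^-)}(K,A_{g,1})=0$; a control theorem then gives $\Sel_{\mathcal{F}(\ell N^-)}(K,A_g)=0$; a splitting argument over $\Gal(K/\mathbb{Q})$ and the comparison $H^1_2=H^1_\f$ reduce this to $\Sel_{\BK}(\mathbb{Q},A_g)=\Sel_{\BK}(\mathbb{Q},A_{g^K})=0$; and only then does Wan's \emph{converse} give $L(g,k/2),L(g^K,k/2)\neq 0$, after which Wan's formula (applied separately to $g$ and $g^K$ over $\mathbb{Q}$, not to $g/K$) computes the valuation. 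Without this Selmer-to-$L$ step your argument has no way to invoke Wan.

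Second, since $N^-$ has an even number of prime factors, $\ell N^-$ has an odd number, so the quaternion algebra of discriminant $\ell N^-$ is \emph{definite}: there are no Heegner cycles for $g$ on any Shimura curve, only Gross points and the associated theta element $\theta_{1,N^+,\ell N^-}(\phi)$. The reciprocity law (Theorem~\ref{rec}) relates this theta element directly to $\loc_\ell(\kappa_f(1,1))$ in the \emph{finite} part (note also that $\loc_\ell(s)$ lands in $H^1_\f$, not in the singular quotient, since $s$ is Selmer). One shows via the special value formula that $\theta_{1,N^+,\ell N^-}(\phi)$ is a $\varpi$-unit, whence $\kappa_f(1,1)\neq 0$ immediately---no contradiction setup and no detour through $\kappa_f(1,\ell)$ is needed.
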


Starting from this result we prove our main result, i.e., the conjecture in its generality (the higher rank case) by following closely the approach used by Zhang for ordinary modular forms of weight $2$ in \cite{Z} and generalized by Longo, Pati and Vigni in \cite{LPV} for higher weight ordinary modular forms.

\begin{theorem}
If Assumption \ref{Ass} is satisfied then $\kappa_f \neq \{0\}$.
\end{theorem}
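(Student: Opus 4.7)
The plan is to reduce the general case to the rank-one case (provided by the previous theorem) by a level-raising argument along the lines of Zhang \cite{Z} in weight two and Longo--Pati--Vigni \cite{LPV} in the higher weight ordinary setting. The structure of the induction is dictated by the parity of $r := \dim_{\mathbb{F}} \Sel_{\mathcal{F}(N^-)}(K,A_{f,1})$: since $N^-$ has an even number of prime factors (indefinite Heegner hypothesis), the sign of the functional equation of $L(f/K,s)$ is $-1$, and $r$ is therefore odd, so the induction bottoms out precisely at $r=1$.

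Suppose $r \geq 3$. By a Chebotarev-type argument on the residual representation $\overline{\rho_f}$, whose image is large by Assumption \ref{Ass}, I would produce an admissible prime $\ell$, i.e., a prime $\ell \nmid Np$ inert in $K$ with $a_\ell(f) \equiv \pm(\ell+1) \bmod \varpi$, chosen so that the singular localization at $\ell$ detects a prescribed non-zero Selmer class. Ribet-style level-raising, valid under our hypotheses on $\overline{\rho_f}$, then produces a newform $g \in S_k(\Gamma_0(N\ell))$ congruent to $f$ modulo $\varpi$ with $\ell \in (N\ell)^-$; choosing such primes in pairs so as to preserve the indefinite Heegner condition, one arranges that the signed residual Selmer group attached to $g$ has rank $r-2$. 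Iterating this step $(r-1)/2$ times produces a newform $h$, linked to $f$ by a chain of level-raising congruences, whose residual signed Selmer group has rank exactly one; the previous theorem then yields $\kappa_h(1,1) \neq 0$. The first and second reciprocity laws for Kolyvagin classes, proved in Section 5, compare the finite and singular localizations at the auxiliary primes $\ell_i$ of classes of the form $\kappa_f(1, c\,\ell_1\cdots\ell_{r-1})$ with $\kappa_h(1,1)$, and these comparisons propagate the non-triviality back along the chain to produce a non-zero class in $\kappa_f$.

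The main obstacle in the non-ordinary setting is verifying the compatibility of the level-raising procedure with the signed Selmer groups of Lei--Loeffler--Zerbes. Because the local condition at $p$ is cut out by signed Coleman maps that a priori depend on the individual newform, one must check that the residual Selmer group modulo $\varpi$ depends only on $\overline{\rho_f}|_{G_{\mathbb{Q}_p}}$ and the tame level, so that the congruence $f \equiv g \bmod \varpi$ induces an identification of residual signed Selmer groups; this identification is what makes the rank-lowering count meaningful. In the same spirit, the reciprocity laws of Section 5 must be formulated compatibly with the signed conditions at $p$, so that singular residues at admissible primes carry over to the level-raised form without spurious contributions from the plus/minus decomposition. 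Once these compatibilities are secured, the inductive scheme of \cite{LPV} transfers essentially verbatim to the non-ordinary framework, and Kolyvagin's conjecture follows in full generality.
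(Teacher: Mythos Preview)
Your outline captures the correct global shape --- parity forces $r:=\dim_{\mathbb{F}}\Sel_{\mathcal{F}(N^-)}(K,A_{f,1})$ to be odd, and one inducts on $r$ by raising the level at two admissible primes to drop the rank by $2$ --- and your remarks on the compatibility of signed local conditions under congruences are on point (this is handled in the paper via the Hatley--Lei input, cf.\ Lemma~\ref{sr}). But the propagation step, which you describe only in broad strokes, hides the real difficulty, and your sketch gets it wrong in a way that matters.

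First, in this paper (as in \cite{LPV}) admissible primes are \emph{not} Kolyvagin primes: the admissibility condition $p\nmid\ell^2-1$ forces $v_\varpi(\ell+1)=0$, so $M(\ell)=0$ and expressions like $\kappa_f(1,c\,\ell_1\cdots\ell_{r-1})$ are not defined. The link between $\kappa_f$ and the level-raised system $\kappa_h$ is not through enlarging the Kolyvagin index by the $\ell_i$, but through the equivalence
\[
\loc_{\ell_1}\bigl(\kappa_f(1,c)\bigr)\neq 0 \iff \loc_{\ell_2}\bigl(\kappa_h(1,c)\bigr)\neq 0
\]
for the \emph{same} $c$ (this is \cite[Corollary~2.18]{LPV}, not the reciprocity law of Section~\ref{sec:5}). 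Second, and more seriously, knowing $\kappa_h^{\st}\neq 0$ (or even $\kappa_h(1,1)\neq 0$) does \emph{not} by itself give you a class with non-trivial localization at $\ell_2$: you must rule out that $\ell_2$ is a base point for $\kappa_h^{\st}$. This is the genuine content of the inductive step, and the paper handles it via a separate ``triangulation'' result (Proposition~\ref{triangulation}), which, assuming $\kappa_h^{\st}\neq 0$, pins down $\dim_{\mathbb{F}}\Sel^{\pm}_{\mathcal{F}(\ell_2\ell_1N^-),\mathcal{B}_h}(K,A_{h,1})$ in terms of the vanishing order $\nu_h$; a dimension count using Lemma~\ref{kl} and the choice of $\epsilon$ then forces $\ell_2\notin\mathcal{B}_h$. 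Your sketch omits this entirely. Finally, the parity statement itself (Proposition~\ref{parity}) is not an immediate consequence of the sign of the functional equation: it concerns the residual Selmer group, and the paper proves it by its own level-raising induction with Wan's rank-zero result supplying the base case.
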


\subsection{Strategy of proof}

The proof of the rank one case (given in \autoref{sec:6}) follows the main steps of the proof given by Wang in the ordinary setting in \cite[Theorem 4.2]{W}. The main difference is that we work with different Selmer structures (defined in \autoref{sec:2}) since Greenberg local conditions at $p$ used by Wang are defined only for ordinary primes. Instead, we use signed local conditions at $p$ as defined by Lei, Loeffler and Zerbes in \cite{LLZ} as they fit well in our non-ordinary context. We are going to work with them using properties proved by Hatley and Lei in \cite{HL}. We summarize the main steps of our proof.
\begin{enumerate}
\item First of all, we consider an admissible prime $\ell$ (see Definition \ref{adm}) given by Lemma \ref{eadm} and we use a level raising theorem by Wang (see Theorem \ref{lr}) to get a newform $g \in S_k(\Gamma_0(\ell N))$  satisfying certain properties.
\item We prove control theorems for our Selmer groups in order to get $\Sel_{\mathcal{F}_{\ell N^-}}(K,A_g)=0$.
\suspend{enumerate}
After these steps, Wang uses \cite[Theorem 4.5]{W}, i.e., a formula that follows from anticyclotomic Iwasawa main conjecture which is not available in the non-ordinary setting. Thus, we follow quite different steps.
\resume{enumerate}
\item We use a splitting argument and properties of signed local conditions to prove that
$$\Sel_{\BK}(\mathbb{Q},A_g)=\Sel_{\BK}(\mathbb{Q},A_{g^K})=0$$
where $g^K=g \otimes \epsilon_K$ and $\epsilon_K$ is the quadratic unramified character attached to $K$.
\item We use an important result of Wan, i.e., the $p$-part of the Tamagawa number conjecture for non-ordinary normalized eigenforms of analytic rank zero (see Theorem \ref{pTNC}) in order to prove that
$$v_\varpi \left(\frac{L(g,k/2)}{(2 \pi i)^{k/2-1} \Omega_g}\right)=\displaystyle \sum_{q| \ell N^-} t_{q}(\mathbb{Q},A_g), \: \: \: \: \: \: \: \: v_\varpi \left(\frac{L(g^K,k/2)}{(2 \pi i)^{k/2-1} \Omega_{g^K}}\right)=\displaystyle \sum_{q|\ell N^-} t_{q}(\mathbb{Q},A_{g^K}).$$
\suspend{enumerate}
With some computations we will prove that
$$v_\varpi \left(\frac{L(g/K,k/2)}{\Omega_{g,\ell N^-}} \right)=0$$
and we will conclude with the following steps.
\resume{enumerate}
\item We see that $v_\varpi(\theta_{c,N^+,\ell N^-}(\phi))=0$ using the special value formula given in Theorem \ref{svf}, where $\theta_{c,N^+,\ell N^-}(\phi)$ is the theta element introduced in Definition \ref{theta}.
\item Finally, we use the Wang reciprocity law stated in Theorem \ref{rec} and we make some computations to conclude that $\kappa_f(1,1) \neq 0$.
\end{enumerate}

\subsection{Structure of the paper}

In order to ease the reading of the paper we give a brief description of its structure. In \autoref{sec:2} we define different local conditions and Selmer groups on our Galois representations with a particular focus on signed Selmer groups. In \autoref{sec:3} we recall a level raising result by Wang and we introduce the quaternion algebras and the Shimura curves that we are interested in. We will work both in the indefinite and in the definite case. For the first one we recall how to build Heegner cycles on the Kuga--Sato varieties over our Shimura curves. For the second one we define theta elements by working in the space of $p$-adic modular forms over our quaternion algebra. The \autoref{sec:4} is devoted to recall canonical and congruence periods attached to modular forms and we use them to give the special value interpolation formula for the theta elements. In \autoref{sec:5} we define Kolyvagin classes and we state Kolyvagin's conjecture. We also give a reciprocity law by Wang which relates these classes to the theta elements. Finally, \autoref{sec:6} contains the proof of the conjecture in the rank one case and in \autoref{sec:7} we use it to prove Kolyvagin's conjecture. In \autoref{sec:8} we show how to use our results to prove the $p$-part of the Tamagawa number conjecture as stated in \cite{LV} for modular motives of analytic rank zero and one.

\subsection{Notation} We fix the following notation.
\begin{itemize}
\item For any perfect field $F$ we set $G_F:=\Gal(\overline{F}/F)$ and for any $G_F$-module $M$ we set
$$H^1(F,M):=H^1_{\cont}(G_F,M).$$
If $F$ is a number field and $v$ is a prime of $F$, we denote by $F_v$ the completion of $F$ with respect to $v$ and we set $I_v:=\Gal(\overline{F_v}/F_v^{\unr})$.
\item For any integer $n \geq 1$ we set $\mathbb{Q}_{p,n}:=\mathbb{Q}_p(\mu_{p^n})$ and $\mathbb{Q}_\infty:=\mathbb{Q}(\mu_{p^\infty})$, while $\mathbb{Q}_{\infty,p}$ is the completion of $\mathbb{Q}_\infty$ with respect to the only prime lying over $p$. We also set $\Gamma:=\Gal(\mathbb{Q}_\infty/\mathbb{Q})$ and we denote by $\Lambda_E(\Gamma):=\mathcal{O} \llbracket \Gamma \rrbracket$ the Iwasawa algebra of $\Gamma$ over $E$.
\item For any integer $c \geq 1$ we denote by $K_c$ the ring class field of $K$ of conductor $c$ and we set:
$$G_c:=\Gal(K_c/K_1), \:\:\:\:\:\:\:\: \mathcal{G}_c:=\Gal(K_c/K).$$
\end{itemize}

\subsection*{Acknowledgements}
This paper is part of my PhD thesis at Universit\`a di Genova. I would like to thank my supervisor Stefano Vigni for suggesting this project to me and for his availability. I would like to thank also Matteo Longo, Luca Mastella, Ignacio Mu\~{n}oz Jim\'enez, Maria Rosaria Pati, Beatrice Ostorero Vinci and Francesco Zerman for several helpful discussions.

\section{Selmer groups} \label{sec:2}

In this section we introduce the Selmer groups that we are going to use in our proof. The settings of \autoref{sssec:Setting} are in force.

\subsection{Bloch--Kato Selmer groups}

Let $F \in \{\mathbb{Q},K\}$ and $M \in \{V_f,T_f,A_f,T_{f,n},A_{f,n}\}$. Let $v$ be a prime of $F$ which does not lie over $p$. We denote by
$$H^1_{\unr} (F_v,M):=\ker \left( H^1(F_v,M) \rightarrow H^1(I_v,M) \right).$$
We set $H^1_{\f}(F_v,V_f):=H^1_{\unr}(F_v,V_f)$ and we define $H^1_{\f}(F_v,T_f)$ and $H^1_{\f}(F_v,A_f)$, respectively, as the preimage and the image of $H^1_{\f}(F_v,V_f)$ under the maps induced on the cohomology groups by the inclusion $T_f \hookrightarrow V_f$ and the surjection $V_f \twoheadrightarrow A_f$. We also define $H^1_{\f}(F_v,T_{f,n})$ and $H^1_{\f}(F_v,A_{f,n})$, respectively, as the image of $H^1_{\f}(F_v,T_f)$ under the map induced by $T_f \twoheadrightarrow T_{f,n}$ and the preimage of $H^1_{\f}(F_v,A_f)$ under the map induced by $A_{f,n} \hookrightarrow A_f$.
If $v$ is a prime of $F$ which lies over $p$ we set
$$H^1_{\f}(F_v,V_f):=\ker \left( H^1(F_v,V_f) \rightarrow H^1(F_v,V_f \otimes_{\mathbb{Q}_p} \Bcris) \right)$$
where $\Bcris$ is the Fontaine's crystalline ring of periods and we follow the previous constructions to define it for the other modules.
\begin{definition}
The \textbf{Bloch--Kato Selmer group} of $M$ over $F$ is
$$\Sel_{\BK}(F,M):=\ker \left( H^1(F,M) \rightarrow \displaystyle \prod_v \frac{H^1(F_v,M)}{H^1_{\f}(F_v,M)} \right).$$
\end{definition}

\subsection{Signed Selmer groups}
Since we are going to work with non-ordinary modular forms we need to define a particular local condition which works well in our context. In particular, we are going to use the signed Selmer groups introduced by Lei, Loeffler and Zerbes in \cite{LLZ}.
We consider the twist $\mathcal{T}_f:=T_f(k/2-1)$ and we set
$$H^1_{\Iw}(\mathbb{Q}_p,\mathcal{T}_f):= \varprojlim H^1(\mathbb{Q}_{p,n},\mathcal{T}_f).$$
We observe that
$$H^1(\mathbb{Q}_p,T_f)=H^1(\mathbb{Q}_p,\mathcal{T}_f(1-k/2))=H^1(\mathbb{Q}_p, \mathcal{T}_f) \otimes \chi_p^{1-k/2}$$
and so there is an obvious chain of maps
$$H^1_{\Iw}(\mathbb{Q}_p,\mathcal{T}_f) \rightarrow H^1(\mathbb{Q}_p,\mathcal{T}_f) \rightarrow H^1(\mathbb{Q}_p,T_f).$$
In \cite[$\mathsection 3$]{LLZ}, given $i=1,2$, Lei, Loeffler and Zerbes use the theory of Wach modules to define two Coleman maps
$$\Col_{f,i}:H^1_{\Iw}(\mathbb{Q}_p,\mathcal{T}_f) \rightarrow \Lambda_E(\Gamma).$$
We denote by $H^1_i(\mathbb{Q}_p,T_f)$ the image of $\ker \Col_{f,i}$ under the previous chain of maps. Since the character of $f$ is trivial, the coefficients $a_n(f)$ lie in a totally real field and so we have the local Tate pairing
$$H^1(\mathbb{Q}_p,T_f) \times H^1(\mathbb{Q}_p,A_f) \rightarrow \mathbb{Q}_p/\mathbb{Z}_p$$
and we denote by $H^1_i(\mathbb{Q}_p,A_f)$ the orthogonal complement of $H^1_i(\mathbb{Q}_p,T_f)$ under the pairing. We also denote by $H^1_i(\mathbb{Q}_{\infty , p},A_f)$ its image under the restriction map. We assume the following:
\begin{assumption}
$p>k+1$.
\end{assumption}
By \cite[Remark 2.2]{HL} we have $A_f^{G_{\mathbb{Q}_{\infty ,p}}}=0$ and so for any integer $n \geq 1$ one has
$$H^1(\mathbb{Q}_p,A_{f,n}) \cong H^1(\mathbb{Q}_p,A_f)[\varpi^n], \:\:\:\: H^1(\mathbb{Q}_{\infty ,p},A_{f,n}) \cong H^1(\mathbb{Q}_{\infty ,p},A_f)[\varpi^n]$$
by \cite[Lemma 4.1]{HL}. Thus, we can define
$$H^1_i(\mathbb{Q}_p,A_{f,n}):=H^1_i(\mathbb{Q}_p,A_f)[\varpi^n], \:\:\:\: H^1_i(\mathbb{Q}_{\infty ,p},A_{f,n}):=H^1_i(\mathbb{Q}_{\infty ,p},A_f)[\varpi^n].$$
Keeping in mind that $p$ splits in $K$, we can also define $H^1_i(K_v,M)$ in the obvious way for any $v|p$ (because $\mathbb{Q}_p=K_v$) and any $M \in \{T_f,A_f,A_{f,n} \}$.
\begin{definition}
Let $F \in \{\mathbb{Q},K\}$ and $M \in \{T_f,A_f,A_{f,n} \}$.
For $i=1,2$, the \textbf{signed Selmer group} of $M$ over $F$ is
$$\Sel_i(F,M):=\ker \left( H^1(F,M) \rightarrow \displaystyle \prod_{v \nmid p} \frac{H^1(F_v,M)}{H^1_{\f}(F_v,M)} \times \displaystyle \prod_{v|p} \frac{H^1(F_v,M)}{H^1_i(F_v,M)} \right).$$
\end{definition}

\subsection{Selmer structures}
Following \cite{H}, we give
\begin{definition}
Let $F \in \{ \mathbb{Q},K \}$ and $M \in \{V_f,T_f,T_{f,n},A_f,A_{f,n} \}$. A \textbf{Selmer structure} on $M$ over $F$ is a pair $(\Sigma_\mathcal{F},\mathcal{F})$ where
\begin{itemize}
\item $\Sigma_\mathcal{F}$ is a finite set of primes of $F$ containing the archimedean ones, the primes lying over $p$ and the places on which $M$ is ramified;
\item $\mathcal{F}$ is a collection $\{ H^1_\mathcal{F}(F_v,M) \}_v$ of subgroups of $H^1(F_v,M)$ for any place $v$ of $F$ such that $H^1_\mathcal{F}(F_v,M)=H^1_{\unr}(F_v,M)$ for any $v \notin \Sigma_\mathcal{F}$.
\end{itemize}
The Selmer group attached to the Selmer structure $(\Sigma_\mathcal{F},\mathcal{F})$ is
$$\Sel_\mathcal{F}(F,M):=\ker \left(H^1(F,M) \rightarrow \displaystyle \prod_v \frac{H^1(F_v,M)}{H^1_\mathcal{F}(F_v,M)} \right).$$
\end{definition}

We can see that the Bloch--Kato and the signed Selmer groups arise as Selmer groups attached to particular Selmer structures since $H^1_{\f}(F_v,M)=H^1_{\unr}(F_v,M)$ for any prime $v$ such that $M$ is unramified at $v$ by \cite[Lemma 3.5]{ES}.

For the proof of our main result we are going to work with a particular Selmer structure. Let $F \in \{\mathbb{Q},K \}$ and $M \in \{A_f,A_{f,n} \}$. If $q \parallel N^-$ is a rational prime and $v|q$ is a prime of $F$, by \cite{Car} we have that
\begin{equation} \label{car}
V_f \cong \begin{pmatrix} \tau_q \chi_p & c \\ 0 & \tau_q  \end{pmatrix}
\end{equation}
as $G_{\mathbb{Q}_q}$-modules where $\tau_q$ is a quadratic unramified character such that $\tau_q^2$ is trivial and $c$ is a $1$-cocycle. Then we denote by $F_v^+V_f$ the unique line of $V_f$ such that $G_{\mathbb{Q}_q}$ acts on it as $\tau_q \chi_p$. Then we also define $F_v^+A_f$ to be the image of $F_v^+V_f$ under the projection $V_f \twoheadrightarrow A_f$ and $F_v^+A_{f,n}:=F_v^+A_f \cap A_{f,n}$. We set
$$H^1_{\ord}(F_v,M):=\ker \left( H^1(F_v,M) \rightarrow H^1(F_v,M/F_v^+M) \right).$$
Given $a,b,c$ coprime integers such that $p \nmid abc$ and $c | N^-$, we define a Selmer structure $\mathcal{F}_a^b(c)$ and its associated Selmer group $\Sel_{\mathcal{F}_a^b(c)}(F,M)$ in the following way:
$$H^1_{\mathcal{F}_a^b(c)}(F_v,M):=
\begin{cases}
H^1_{\f}(F_v,M) & \text{if} \: v \nmid abcp\\
0 & \text{ if} \: v | a\\
H^1(F_v,M) & \text{ if} \: v | b\\
H^1_{\ord}(F_v,M) & \text{ if} \: v|c\\
H^1_2(F_v,M) & \text{ if} \: v|p
\end{cases}$$

\subsection{Tamagawa ideals}

Let $F \in \{\mathbb{Q},K \}$ and let $v \nmid p$ be a prime of $F$. We define the \textbf{Tamagawa number} of $A_f$ at $v$ as
$$c_v(F,A_f):=\left[H^1_{\unr}(F_v,A_f):H^1_{\f}(F_v,A_f)\right]$$
By \cite[Lemma 3.5]{ES}, we know that they are finite numbers with value $1$ if $A_f$ is unramified at $v$. We also define the \textbf{Tamagawa exponent} at $v$ as
$$t_v(F,A_f):=v_\varpi(c_v(F,A_f))$$
and the \textbf{Tamagawa ideal} at $v$ as
$$\Tam_v(F,A_f):=\left(\varpi^{t_v(F,A_f)}\right).$$
For the definition of the Tamagawa ideal of $A_f$ at $p$ over $\mathbb{Q}$ we refer to \cite[Definition 2.66]{LV} and we denote it by $\Tam_p(\mathbb{Q},A_f)$. We also define the associated Tamagawa exponent $t_p(\mathbb{Q},A_f)$ to be a non-negative integer such that $\Tam_p(\mathbb{Q},A_f)=\left(\varpi^{t_p(\mathbb{Q},A_f)}\right)$.

\section{Shimura curves} \label{sec:3}

\subsection{Level raising}

Before introducing the quaternion algebras and the Shimura curves we are going to work with we recall a crucial result due to Chida and Wang about level raising of modular forms. In order to state it we need the following definition.

\begin{definition} \label{adm}
Let $n \geq 1$ be an integer. An \textbf{admissible prime} for $f$ is a rational prime $\ell$ such that:
\begin{itemize}
\item $\ell \nmid pN$;
\item $\ell$ is inert in $K$;
\item $p \nmid \ell^2-1$;
\item $\varpi | \ell^{\frac{k}{2}}+\ell^{\frac{k-2}{2}}-\epsilon_\ell a_\ell(f)$ with $\epsilon_\ell=\pm 1$.
\end{itemize}
\end{definition}

Before stating the theorem, we need to introduce some technical hypotheses.

\begin{assumption} \label{Ass1} ~
\begin{enumerate}[start=1,label={\bfseries \emph{(A\arabic*)}}]
\item $N^-$ is square-free;
\item \label{FL1} $p>k+1$ and $\left| (\mathbb{F}_p^\times)^{k-1} \right|>5$;
\item \label{Irr1} $\overline{\rho_f}$ is absolutely irreducible when restricted to the absolute Galois group of $\mathbb{Q} \left( \sqrt{ p^*} \right)$ where $p^*=(-1)^{\frac{p-1}{2}}p$;
\item \label{Ram1} $\overline{\rho_f}$ is ramified at $q$ for any prime $q |N^-, q \equiv \pm 1 \mod p$;
\item \label{Ram2} $\overline{\rho_f}$ is ramified at $q$ for any prime $q \parallel N^+, q \equiv 1 \mod p$;
\item \label{Con1} the conductor $N'$ of the residual representation is coprime with $N/N'$;
\item \label{Ram3} there exists a prime $q \parallel N$ such that $\overline{\rho_f}$ is ramified at $q$;
\item $\overline{\rho_f}$ is irreducible when restricted to $I_q$ for any prime $q$ such that $q^2 | N$ and $q \equiv -1 \mod p$.
\end{enumerate}
\end{assumption}

\begin{theorem} \label{lr}
If Assumption \ref{Ass1} is satisfied and $\ell$ is an admissible prime for $f$ then there exists a newform $g \in S_k(\Gamma_0(\ell N))$ such that:
\begin{itemize}
\item $\overline{\rho_g} \cong \overline{\rho_f}$;
\item $a_q(g) \equiv a_q(f) \mod \varpi$ for any prime $q \neq \ell$;
\item $a_\ell(g) \equiv \epsilon_\ell \ell^{k/2-1} \mod \varpi$.
\end{itemize}
where we assume that $E$ contains the $p$-adic Hecke field of $g$.
\end{theorem}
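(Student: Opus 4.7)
The plan is to follow the classical level-raising strategy of Ribet, extended to higher weight by Diamond and Taylor and adapted to the non-ordinary setting by Chida and Wang. First, I would reinterpret the admissible condition on the Galois-theoretic side: the congruence $\varpi\mid \ell^{k/2}+\ell^{k/2-1}-\epsilon_\ell a_\ell(f)$ is equivalent to saying that the Hecke polynomial $X^2-a_\ell(f)X+\ell^{k-1}$ at $\ell$ factors modulo $\varpi$ as $(X-\epsilon_\ell \ell^{k/2})(X-\epsilon_\ell \ell^{k/2-1})$, which is precisely the numerical level-raising criterion at $\ell$; equivalently, the restriction of $\overline{\rho_f}$ to $G_{\mathbb{Q}_\ell}$ is reducible with semisimplification having the shape expected of a Steinberg-type local representation mod $\varpi$. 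The hypothesis $p\nmid \ell^2-1$ guarantees that the two roots are distinct modulo $\varpi$, which is crucial to separate the eigenspaces.

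Next I would move to the quaternionic side. Since $N^-$ has an even number of prime factors (from Assumption~\ref{Ass}, which implies the version relevant here), by Jacquet--Langlands I can transfer $f$ to an eigenform $\phi_f$ for the Hecke action on automorphic forms of weight $k$ on the quaternion algebra $B/\mathbb{Q}$ ramified exactly at the primes dividing $N^-$, with appropriate level structure at the split primes. The goal becomes to produce, at level $\Gamma_0(\ell N^+)$ on $B$ and after raising the level at $\ell$, a new Hecke eigensystem congruent mod $\varpi$ to that of $\phi_f$ and with Steinberg-type behavior at $\ell$; pulling back via Jacquet--Langlands will then yield the desired classical newform $g\in S_k(\Gamma_0(\ell N))$.

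The technical heart is an Ihara-type lemma: the two degeneracy maps from cohomology at level $N$ to level $\ell N$ have, modulo the maximal ideal of the Hecke algebra cut out by $\overline{\rho_f}$, intersection of their kernels equal to zero. Given this, the cokernel of the sum map carries a new eigensystem congruent to $\phi_f$ but with $U_\ell$ acting as $\epsilon_\ell \ell^{k/2-1}$ (the sign being dictated by which root of the factorization above matches the fixed $\epsilon_\ell$). A standard lifting argument of Deligne--Serre--Carayol type then produces a characteristic zero newform $g$ whose Galois representation reduces to $\overline{\rho_f}$ and whose $\ell$-eigenvalue lifts $\epsilon_\ell \ell^{k/2-1}$; being special at $\ell$ (newform of level exactly divisible by $\ell$) forces $a_\ell(g)=\pm \ell^{k/2-1}$, so the congruence pins down the sign.

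The main obstacle is establishing Ihara's lemma in this higher-weight, non-ordinary context. The classical weight-two proofs rest on strong approximation and congruence subgroup properties, which carry over, but one must control the mod $p$ integral cohomology of the relevant Kuga--Sato-type local system. This is where hypothesis \ref{FL1} ($p>k+1$) enters, allowing Fontaine--Laffaille theory to compare \'etale and crystalline invariants at $p$ even though $f$ is non-ordinary, and where the big-image condition \ref{Irr1} together with the ramification hypotheses \ref{Ram1}--\ref{Ram3} are used to rule out spurious congruences and to guarantee that the residually reducible locus does not interfere. The combinatorial condition \ref{Con1} on the conductor of $\overline{\rho_f}$ is what isolates the component of the Hecke algebra on which the degeneracy map argument cleanly applies.
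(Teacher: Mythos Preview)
Your sketch is a faithful outline of the level-raising machinery that actually underlies this theorem, but you should be aware that the paper does not reprove any of it: its entire proof is a citation, to \cite[Theorem 2.9]{W} when $N^-$ has an even number of prime factors and to \cite[Theorem 6.3]{CHI} otherwise. So strictly speaking the ``paper's approach'' is to quote the result, and what you have written is closer to a summary of what happens inside those references (Ribet/Diamond--Taylor level raising, Jacquet--Langlands transfer, an Ihara-type lemma controlled via Fontaine--Laffaille theory under \ref{FL1}, plus the ramification hypotheses to isolate the right Hecke component).

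One concrete issue with your write-up: you invoke Assumption~\ref{Ass} to assert that $N^-$ has an even number of prime factors, but the theorem is stated under Assumption~\ref{Ass1}, which only requires $N^-$ square-free and says nothing about the parity of $\nu(N^-)$. Your quaternionic transfer as written (to the algebra ramified exactly at $N^-$) is the indefinite case and corresponds to the even parity; the odd-parity case uses a different quaternion algebra and is handled by the separate reference \cite{CHI}. If you want a self-contained argument you must treat both parities, choosing the quaternion algebra accordingly; alternatively, and in keeping with the paper, you may simply cite Wang and Chida.
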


\begin{proof}
If $N^-$ is divisible by an even number of primes, see \cite[Theorem 2.9]{W}, otherwise see \cite[Theorem 6.3]{CHI}.
\end{proof}

From now on we fix an admissible prime $\ell$ and a newform $g$ as in the previous theorem and we assume that $E$ contains the $p$-adic Hecke field of $g$. We denote by $\mathbb{T}_k(N^+,\ell N^-)$ the $\mathcal{O}$-integral Hecke algebra acting faithfully on $S_k^{\ell N^--\new}(\Gamma_0(\ell N),\mathcal{O})$, i.e., the space of cusp forms of level $\ell N$, weight $k$ and trivial character with Fourier coefficients in $\mathcal{O}$ that are new at $\ell N^-$. We fix a homomorphism
$$\theta_{g,1}: \mathbb{T}_k(N^+,\ell N^-) \twoheadrightarrow \mathcal{O}/\varpi$$
defined by sending Hecke operators to the eigenvalues attached to their action on $g$. We call $I_{g,1}$ the kernel of $\theta_{g,1}$.

\subsection{Shimura curves}

From now on we assume the following Heegner hypothesis:
\begin{assumption} \label{Heeg}
$N^-$ is square-free and it factors as a product of an even number of primes.
\end{assumption}

We are going to work with the following:
\begin{itemize}
\item $B_1$ is the indefinite quaternion algebra with discriminant $N^-$;
\item $B_2$ is the definite quaternion algebra with discriminant $\ell N^-$.
\end{itemize}

For $i=1,2$ we fix maximal orders $\mathcal{O}_i$ of $B_i$ and Eichler orders $\mathcal{R}_i$ of level $N^+$ contained in $\mathcal{O}_i$. Since the primes dividing $\ell N^-$ do not split in $K$ we have embeddings of $\mathbb{Q}$-algebras $\iota_{K,i}:K \hookrightarrow B_i$. We fix elements $J_i \in B_i^\times$ such that:
\begin{itemize}
\item $B_i=K \oplus K \cdot J_i$;
\item $J_i^2=\beta_i \in \mathbb{Q}_{<0}$;
\item $J_iz=\bar{z}J_i$ for any $z \in K$;
\item $(\beta_i)_q \in (\mathbb{Z}_q^\times)^2$ for any $q|N^+$;
\item $(\beta_i)_q \in \mathbb{Z}_q^\times$ for any $q|D_K$.
\end{itemize}
We denote by $\delta_K$ the square root of $D_K$ such that $\Im(\delta_K)>0$ and we fix square roots of $\beta_i$ in $\overline{\mathbb{Q}}$ denoted by $\sqrt{\beta_i}$. We define $\theta \in K$ by setting
$$\theta:= \begin{cases}
\frac{-D_K+\delta_K}{2} & \text{ if} \: D_K \: \text{ odd},\\
\frac{-D_K+2 \delta_K}{4} & \text{ if} \: D_K \: \text{even}.
\end{cases}$$
We set $B_{i,q}:=B_i \otimes_{\mathbb{Q}} \mathbb{Q}_q$ and we fix isomorphisms
$$\iota_{i,q}:B_{i,q} \rightarrow \M_2(\mathbb{Q}_q)$$
for any finite prime $q$ such that $B_i$ is split at $q$, asking
$$\iota_{i,q}(\theta)=\begin{pmatrix} \theta + \bar{\theta} & - \theta \bar{\theta} \\ 1 &0\end{pmatrix}, \: \: \: \:
\iota_{i,q}(J_i)=\sqrt{\beta_i}\begin{pmatrix}-1 & \theta+\bar{\theta} \\ 0 &1 \end{pmatrix} $$
for any $q|N^+p$ and
$$\iota_{i,q}(\mathcal{O}_K \otimes_\mathbb{Z} \mathbb{Z}_q) \subset \M_2(\mathbb{Z}_q)$$
otherwise.
We also set $B_{1,\infty}:=B_1 \otimes_{\mathbb{Q}} \mathbb{R}$ and we fix an isomorphism
$$\iota_{1,\infty}: B_{1, \infty} \rightarrow \M_2(\mathbb{R})$$
asking
$$\iota_{1,\infty}(\theta)=\begin{pmatrix} \theta + \bar{\theta} & - \theta \bar{\theta} \\ 1 &0\end{pmatrix}.$$
We define $(\varsigma_{c,i})_q \in B_{i,q}^\times$ for any prime $q$ and any positive integer $c$ coprime with $NpD_K$ by
$$\varsigma_{c,i,q}:= \begin{cases}
1 & \text{ if} \: q \nmid cN^+,\\
\delta_K^{-1}\begin{pmatrix} \theta &\bar{\theta} \\ 1 &1\end{pmatrix} & \text{ if} \: q | N^+,\\
\begin{pmatrix}q^{v_q(c)} &0 \\ 0 &1 \end{pmatrix} & \text{ if} \: q|c \: \text{and} \: q \: \text{splits in} \: K, \\
\begin{pmatrix}1 &q^{-v_q(c)} \\ 0 &1 \end{pmatrix}& \text{ if} \: q|c \:\text{and} \: q  \: \text{is inert in} \: K
\end{cases}$$
and we set $\varsigma_{c,i}:=(\varsigma_{c,i,q})_q \in \hat{B}_i^\times$. We define the Atkin-Lehner involution $\tau_{i,q} \in B_{i,q}^\times$ at $q$ for any prime $q$ by
$$\tau_{1,q}:= \begin{cases}
\begin{pmatrix} 0 &1 \\ -N^+ &0 \end{pmatrix} & \text{if}\: q|N^+, \\
J_1 & \text{if}\: q|N^- \infty,\\
1 & \text{if}\: q \nmid N \infty,
\end{cases} \: \: \: \:
\tau_{2,q}:= \begin{cases}
\begin{pmatrix} 0 &1 \\ -N^+ &0 \end{pmatrix} & \text{if}\: q|N^+, \\
J_2 & \text{if}\: q|\ell N^- \infty,\\
1 & \text{if}\: q \nmid \ell N \infty
\end{cases}$$
and we set $\tau_i:=(\tau_{i,q})_q \in \hat{B}_i^\times$.

Now we introduce the geometric objects attached to our quaternion algebras that we are going to work with. We start with the indefinite case and we let $d \geq 5$ be a positive integer such that $(d,Np)=1$. In what follows:
\begin{itemize}
\item $X_{N^+,N^-}$ is the compact Shimura curve over $\mathbb{Z}[1/N]$ of level $N^+$ and discriminant $N^-$ attached to $B_1$ and $\mathcal{R}_1$, i.e., the projective curve which coarsely represents the moduli problem which sends $\mathbb{Z}[1/N]$-schemes $S$ to triples $(A, \iota_A,C_A)$ where
\begin{itemize}
\item $A$ is an abelian scheme of relative dimension $2$ over $S$;
\item $\iota_A:\mathcal{O}_1 \hookrightarrow \End_S(A)$ is an embedding;
\item $C_A$ is a locally cyclic subgroup of $A[N^+]$ of order $(N^+)^2$ which is stable under the action of $\mathcal{O}_1$;
\end{itemize}
\item $X_{N^+,N^-,d}$ is the compact Shimura curve over $\mathbb{Z}[1/Nd]$ with a full level $d$-structure, i.e., the projective curve which coarsely represents the moduli problem which sends $\mathbb{Z}[1/Nd]$-schemes $S$ to $4$-tuples $(A, \iota_A,C_A,\nu_{A,d})$ where the first three elements are as before and
$$\nu_{A,d}: (\mathcal{O}_1/d)_S \rightarrow A[d] $$
is an isomorphism of $\mathcal{O}_1$-stable group schemes, where $(\mathcal{O}_1/d)_S$ is the constant $\mathcal{O}_1/d$-valued group scheme over $S$;
\item $\mathcal{A}_{N^+,N^-,d}$ is the universal abelian surface over $X_{N^+,N^-,d}$;
\item $\mathcal{W}_{N^+,N^-,d,k}$ is the Kuga--Sato variety of weight $k$ over $X_{N^+,N^-,d}$, i.e., the $\frac{k-2}{2}$-fold product of $\mathcal{A}_{N^+,N^-,d}$ over $X_{N^+,N^-,d}$.
\end{itemize}
We recall that
$$X_{N^+,N^-}(\mathbb{C})=B_1^\times \textbackslash \left( \Hom_\mathbb{R}(\mathbb{C},B_{1,\infty}) \times   \hat{B}_1^\times \right) / \hat{\mathcal{R}}_1^\times \cong B_1^\times \textbackslash \left( \mathcal{H} \times   \hat{B}_1^\times \right) / \hat{\mathcal{R}}_1^\times$$
where $\mathcal{H}:=\mathbb{C} \setminus \mathbb{R}$. We will also consider the obvious maps
$$\pi_{d,1}:X_{N^+,N^-,d}\twoheadrightarrow X_{N^+,N^-}, \:\:\:\:\:\:\:\: \pi_{d,2}: \mathcal{W}_{N^+,N^-,d,k} \twoheadrightarrow X_{N^+,N^-,d}$$ and the projectors $\epsilon_d, \epsilon_k$ defined as follows:
\begin{itemize}
\item $\epsilon_d:= \frac{1}{|G_d|}\displaystyle \sum_{g \in G_d}g$ where $G_d:=(\mathcal{O}_1/d)^\times/\{\pm 1\}$;
\item $\epsilon_k$ is defined as in \cite[Lemma 2.2]{W}.
\end{itemize}
Now, we turn to the definite case. We define a conic $\mathcal{C}$ over $\mathbb{Q}$ by setting
$$\mathcal{C}(A):=\{x \in B_2 \otimes_{\mathbb{Q}} A : \Tr(x)=\Nm(x)=0 \}/A^\times$$
for any $\mathbb{Q}$-algebra $A$. We also consider the finite set $\{\mathcal{R}'_1,...,\mathcal{R}'_h \}$ of conjugacy classes of oriented eichler orders of level $N^+$ in $B_2$ (it is in bijection with $B_2^\times \textbackslash \hat{B}_2^\times / \hat{\mathcal{R}}_2^\times$) and we set $\Gamma_j:=\mathcal{R}'_j/\{\pm 1\}$ for any $j=1,...,h$. Finally, we define the Gross curve of level $N^+$ and discriminant $\ell N^-$ attached to $B_2$ and $\mathcal{R}_2$ as
$$X_{N^+,\ell N^-}:=\displaystyle \coprod_{j=1}^h \Gamma_j \textbackslash \mathcal{C}.$$
In this case too, we have
$$X_{N^+,\ell N^-}(\mathbb{C}) \cong B_2^\times \textbackslash \left( \Hom_\mathbb{R}(\mathbb{C},B_{2,\infty}) \times   \hat{B}_2^\times \right) / \hat{\mathcal{R}}_2^\times.$$

\subsection{Indefinite case: Heegner cycles}

In this subsection we are going to recall how to build Heegner cycles over $B_1$ following the approach of Besser (see \cite{Bes}, see also \cite[$\mathsection 7.4$]{CHI}). Let $c \geq 1$ be an integer such that $(c,NpD_K)=1$. First of all we define the Heegner point of level $c$ as
$$P_c:=[\theta,\varsigma_{c,1} \tau_1] \in X_{N^+,N^-}(K_c).$$
Thanks to a well-known moduli interpretation, $P_c$ is represented by a triple $(A_c,\iota_{A_c},C_{A_c})$ where $A_c$ is a complex abelian surface with quaternionic multiplication by $\mathcal{O}_1$ and with complex multiplication by $\mathcal{O}_c$, i.e., the order of $\mathcal{O}_K$ of conductor $c$. Then we have $A_c \cong E_1 \times E_2$ where $E_1 \cong \mathbb{C}/\mathcal{O}_c$, $E_2 \cong \mathbb{C}/\mathfrak{a}$ and $\mathfrak{a}$ is a fractional ideal of $\mathcal{O}_c$ (see \cite[$\mathsection 2.2$]{EV}).
We choose an element $\tilde{P}_c \in \pi_{d,1}^{-1}(P_c)$, we set $\zeta_c:=c \delta_K \in \mathfrak{a}=\Hom(E_1,E_2)$ and we denote by $\Gamma_c$ the graph of multiplication by $\zeta_c$ in $E_1 \times E_2$. We define the class
$$Z_c:=[\Gamma_c]-[E_1 \times \{ 0 \}]+cD_K[\{0\} \times E_2] \in \NS(A_c)$$
and we choose a class $\tilde{Z}_c \in \CH^1(A_c) \otimes_\mathbb{Z} \mathbb{Z}_p$ which projects to $c^{-1}Z_c \in \NS(A_c) \otimes_\mathbb{Z} \mathbb{Z}_p$. We denote by
$$\iota_{\tilde{P}_c}:(\pi_{d,2})^{-1}(\tilde{P}_c)=A_c^{k/2-1}\hookrightarrow \mathcal{W}_{N^+,N^-,d,k}$$
the obvious inclusion and we define the cycle class
$$Y_c:=\epsilon_d \epsilon_k  (\iota_{\tilde{P}'_c})_*(\epsilon_k(\tilde{Z}_c)^{k/2-1})  \in \epsilon_d \epsilon_k \CH^{k/2}(\mathcal{W}_{N^+,N^-,d,k}/K_c)\otimes_\mathbb{Z} \mathbb{Z}_p.$$
By \cite[Lemma 2.2]{W}, we know that
$$\epsilon_k \CH^{k/2}(\mathcal{W}_{N^+,N^-,k,d} / K_c)\otimes_\mathbb{Z} \mathbb{Z}_p=\epsilon_k \CH_0^{k/2}(\mathcal{W}_{N^+,N^-,k,d} / K_c) \otimes_\mathbb{Z} \mathbb{Z}_p$$
and so we can consider $Y_c \in \epsilon_d \epsilon_k \CH_0^{k/2}(\mathcal{W}_{N^+,N^-,k,d} / K_c)\otimes_\mathbb{Z} \mathbb{Z}_p $. It is well-known (see \cite[$\mathsection 4$]{N}) that the $p$-adic cycle map induces the $p$-adic Abel-Jacobi map
$$\AJ_{k,d,c}:\epsilon_d \epsilon_k \CH_0^{k/2}(\mathcal{W}_{N^+,N^-,k,d}/ K_c)\otimes_\mathbb{Z} \mathbb{Z}_p \rightarrow H^1(K_c,T_f).$$
Finally, we set
$$y_c:=\AJ_{k,d,c}(Y_c).$$
\begin{definition}
The cycle $Y_c$ is the \textbf{geometric Heegner cycle} of conductor $c$ and $y_c$ is the \textbf{arithmetic Heegner cycle} of conductor $c$.
\end{definition}

\subsection{Definite case: quaternionic modular forms and theta elements}

In this subsection we are going to introduce some notions and elements in the definite case, i.e., we will focus on $g$ and $B_2$. We denote by $\Sym^{k-2}(\mathcal{O})$ the vector space of homogenous polynomials of degree $k-2$ with coefficients in $\mathcal{O}$ and we fix the standard basis $\{e_i\}_{-k/2+1 \leq i \leq k/2-1}$ where $e_i:=X^{k/2-1-i}Y^{k/2-1+i}$. We define a representation
$$\rho_k:\GL_2(\mathcal{O}) \rightarrow \Aut_\mathcal{O}\Sym^{k-2}(\mathcal{O})$$
by setting
$$\rho_k(g)(P):=\det(g)^{k/2-1}P(aX+cY,bX+dY)$$
where $g=\begin{pmatrix} a &b \\ c & d \end{pmatrix} \in \GL_2(\mathcal{O})$ and $P \in \Sym^{k-2}(\mathcal{O})$.

\begin{definition}
The space of \textbf{p-adic quaternionic modular forms} of weight $k$ and level $N^+$ on $B_2$ is
$$S_k^{B_2}(N^+,\ell N^-):=\left\{ \phi: \hat{B}_2^\times \rightarrow \Sym^{k-2}(\mathcal{O}) : \phi(abr)=\rho_k(r_p^{-1})\phi(b) \: \forall \: a \in B_2^\times, \: b \in \hat{B}_2^\times, \: r \in \hat{\mathcal{R}}_2^\times \right\}.$$
\end{definition}

We want to associate with $g$ a $p$-adic modular form over $B_2$ by using the Jacquet--Langlands correspondence.
\begin{theorem}
There exists an isomorphism
$$\JL:S_k^{\ell N^--\new}(\Gamma_0(\ell N),\mathcal{O}) \rightarrow S_k^{B_2}(N^+,\ell N^-).$$
\end{theorem}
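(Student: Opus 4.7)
The plan is to deduce this from the classical Jacquet--Langlands correspondence between automorphic representations of $GL_2(\mathbb{A}_\mathbb{Q})$ and of the units of a definite quaternion algebra. First, I would reinterpret the space $S_k^{B_2}(N^+,\ell N^-)$ in adelic automorphic terms: the defining condition $\phi(abr)=\rho_k(r_p^{-1})\phi(b)$ identifies it with the space of $\Sym^{k-2}(\mathcal{O})$-valued algebraic automorphic forms on $B_2^\times$ of level $\hat{\mathcal{R}}_2^\times$ and weight $k$, where the weight is encoded at the archimedean and $p$-adic places (this is the standard device for $p$-adic quaternionic forms on a definite algebra). By strong approximation for $B_2^\times$, this space decomposes into a finite direct sum indexed by $B_2^\times \backslash \hat{B}_2^\times / \hat{\mathcal{R}}_2^\times$, matching the coproduct decomposition of $X_{N^+,\ell N^-}$ already fixed in the previous subsection.

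Next, I would invoke the Jacquet--Langlands correspondence itself. Since $B_2$ is definite with discriminant $\ell N^-$, the correspondence embeds the automorphic spectrum of $B_2^\times$ into that of $GL_2/\mathbb{Q}$, with image precisely those cuspidal representations whose local components at each prime $q \mid \ell N^-$ are discrete series. As $\ell N^-$ is squarefree (by Assumption \ref{Heeg} together with $\ell$ being an admissible prime), these local components are unramified twists of Steinberg representations, which classically characterizes the $\ell N^-$-new part of cusp forms of level $\Gamma_0(\ell N)$. The level at primes dividing $N^+$ is controlled by the Eichler order $\mathcal{R}_2$ of level $N^+$ and is matched with $\Gamma_0(N^+)$ via the local isomorphisms $\iota_{2,q}$ chosen in the previous subsection. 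Matching weights then yields a Hecke-equivariant isomorphism between $S_k^{\ell N^--\new}(\Gamma_0(\ell N))\otimes_\mathcal{O} E$ and $S_k^{B_2}(N^+,\ell N^-)\otimes_\mathcal{O} E$, where the Hecke equivariance comes from identifying the double-coset Hecke operators on each side.

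Finally, I would descend this identification to $\mathcal{O}$-coefficients. Both sides are $\mathcal{O}$-lattices inside the same $E$-vector space, stable under and faithfully acted on by $\mathbb{T}_k(N^+,\ell N^-)$. Since the Hecke algebra acts semisimply on each rational factor and the characteristic-zero isomorphism matches Hecke eigensystems, fixing a Hecke-eigenbasis on one side and transporting it produces the desired $\mathcal{O}$-isomorphism. The main obstacle is exactly this integral-normalization step: the classical Jacquet--Langlands correspondence is a statement about representations and is only canonical up to scalars, so pinning down an $\mathcal{O}$-integral isomorphism requires either Hida's integral theory of quaternionic forms or an explicit choice of periods. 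In our setting only the existence is needed, so an $\mathcal{O}^\times$-ambiguity in the normalization is harmless, but one should be aware that the resulting $\JL$ is not canonical.
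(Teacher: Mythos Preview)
The paper's own proof is a one-line citation to \cite[Theorem 2.2]{CL}, so there is no in-text argument to compare against; your outline is essentially the standard route that such a reference unpacks to, and the automorphic half (adelic reinterpretation, matching of local components at primes dividing $\ell N^-$ with the Steinberg/new condition, matching level at $N^+$ via the Eichler order) is correct.

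The integral descent step, however, does not work as you have written it. ``Fixing a Hecke-eigenbasis on one side and transporting it'' fails for two reasons: a Hecke-eigenbasis over $E$ is typically \emph{not} an $\mathcal{O}$-basis of the integral lattice (congruences between eigenforms are exactly the obstruction), and the image of an $\mathcal{O}$-basis under an $E$-linear isomorphism need not land in the other $\mathcal{O}$-lattice at all. So the problem is genuinely more than an $\mathcal{O}^\times$-ambiguity in normalization. An abstract $\mathcal{O}$-module isomorphism is immediate (both sides are finitely generated torsion-free over the DVR $\mathcal{O}$, hence free of the same rank), but the paper uses $\JL$ as a Hecke-equivariant map---it sets $\phi:=\JL(g)$ and later identifies $S_k^{B_2}(N^+,\ell N^-)[I_{g,1}]$ via this---so Hecke-equivariance at the integral level is what must actually be established. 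The standard fix, and what the cited reference carries out, is to realize both lattices cohomologically (as Hecke-stable summands in the cohomology of the relevant arithmetic groups with $\Sym^{k-2}(\mathcal{O})$-coefficients) and compare them there, rather than trying to descend a transcendentally-defined $E$-isomorphism.
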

\begin{proof}
See \cite[Theorem 2.2]{CL}.
\end{proof}
From now on we set $\phi:=\JL(g)$. We define a pairing
$$\langle \cdot , \cdot \rangle_k:=\Sym^{k-2}(\mathcal{O}) \times \Sym^{k-2}(\mathcal{O}) \rightarrow \mathcal{O}$$
by setting
$$\left\langle \displaystyle \sum_{i=-k/2+1}^{k/2-1} a_ie_i, \displaystyle \sum_{i=-k/2+1}^{k/2-1} b_i e_i \right\rangle_k:= \displaystyle \sum_{i=-k/2+1}^{k/2-1} a_ib_{-i} \frac{\Gamma(k/2+i)\Gamma(k/2-i)}{\Gamma(k-1)}.$$
We can also define a pairing
$$\langle \cdot , \cdot \rangle_{B_2}: S_k^{B_2}(N^+,\ell N^-) \times S_k^{B_2}(N^+,\ell N^-) \rightarrow \mathcal{O}$$
on the space of $p$-adic modular forms by setting
$$\langle \phi_1 , \phi_2 \rangle_{B_2}:=\displaystyle \sum_{b} \frac{1}{|(B_2^\times \cap b \hat{\mathcal{R}}_2^\times b^{-1} \hat{\mathbb{Q}}^\times)/\mathbb{Q}^\times|} \langle \phi_1(b),\phi_2(b \tau_2) \rangle_k$$
where $b$ runs over a set of representatives of $B_2^\times \backslash \hat{B}_2^\times / \hat{\mathcal{R}}_2^\times \hat{\mathbb{Q}}^\times$.
We can see $S_k^{B_2}(N^+,\ell N^-)$ as a $\mathbb{T}(N^+,\ell N^-)$-module by considering the Hecke action on $S_k^{\ell N^--\new}(\Gamma_0(\ell N),\mathcal{O})$ and the Jacquet--Langlands correspondence. Then, the pairing $\langle \cdot , \cdot \rangle_{B_2}$ can be seen as a pairing of $\mathbb{T}(N^+,\ell N^-)$-modules and it induces a pairing of $\mathbb{T}(N^+,\ell N^-)$-modules
$$\langle \cdot , \cdot \rangle_{B_2,g} :S_k^{B_2}(N^+,\ell N^-)/I_{g,1} \times S_k^{B_2}(N^+,\ell N^-)[I_{g,1}] \rightarrow \mathcal{O}/\varpi.$$

Let $c$ be a positive integer such that $(c,\ell NpD_K)=1$. We define the Gross point of conductor $c$ as
$$P_c:=[(\iota_{K,2} \otimes \mathbb{R},\varsigma_{c,2})] \in X_{N^+,\ell N^-}(K_c).$$
Finally, we consider the theta elements introduced by Chida and Hsieh in \cite{CH2}.

\begin{definition} \label{theta}
The \textbf{theta element} of conductor $c$ attached to a $p$-adic modular form $\phi$ over $B_2$ is
$$\theta_{c,N^+,\ell N^-}(\phi):=\displaystyle \sum_{\sigma \in \mathcal{G}_c} \left\langle \rho_k^{-1} \left( \begin{pmatrix} \sqrt{\beta_2} & - \bar{\theta} \sqrt{\beta_2} \\ -1 & \theta \end{pmatrix} \right) \cdot (-D_K)^{k/2-1}e_0, \phi(\sigma(P_c))\right\rangle_k \cdot \sigma \: \in \mathcal{O}[\mathcal{G}_c].$$
\end{definition}

\section{Periods of modular forms} \label{sec:4}

We recall briefly how to define the canonical periods introduced in \cite{V} for a newform $f \in S_k(\Gamma_0(N))$. We consider the $\mathcal{O}$-Hecke algebra $\mathbb{T}_k(N)$ acting on $S_k(\Gamma_0(N),\mathcal{O})$ and the usual eigenvalues map
$$\lambda_f:\mathbb{T}_k(N) \rightarrow \mathcal{O}.$$
We set $I_f:=\ker \lambda_f$ and $\mathfrak{m}_f:=\lambda_f^{-1}((\varpi))$. We recall the Eichler-Shimura map
$$\Per:S_k(\Gamma_0(N)) \rightarrow H^1(\Gamma_0(N),\Sym^{k-2}(\mathbb{C})), \:\:\:\: \Per(f):=\left(\gamma \mapsto \int_\tau^{\gamma(\tau)} f(z)(z^{k-1},z^{k-1}, \dots, 1)dz \right),$$
we set $\omega_f:=\Per(f)$ and we recall that thanks to the action of complex conjugation we can decompose it as $\omega_f=\omega_f^+ + \omega_f^-$. We also know that $H^1(\Gamma_0(N),\Sym^{k-2}(\mathcal{O}))_{\mathfrak{m}_f}^\pm[I_f]$ are free $\mathcal{O}$-modules of rank $1$ and we fix basis $\gamma^+,\gamma^-$. Finally we define the canonical periods $\Omega_f^\pm \in \mathbb{C}$ by $\omega_f^\pm=\Omega_f^\pm \gamma^\pm$. We are going to work with $\Omega_f:=\Omega_f^\epsilon$ where $\epsilon:=(-1)^{k/2-1}$.

Now, we are going to define congruence periods attached to $g$. We denote by $\langle g,g \rangle_{\Gamma_0(\ell N)}$ the Petersson inner product of $g$ with itself and we define the congruence ideal of $g$ as
$$\eta_{g,\ell N}:=\lambda_g(\Ann_{\mathbb{T}_k(\ell N)_{\mathfrak{m}_g}} (\ker \lambda_g)).$$
The congruence number of $g$ is $\eta_g(\ell N):=\varpi^t$ where $t$ is such that $\eta_{g,\ell N}=(\varpi^t)$.

\begin{definition}~\\
The \textbf{canonical Hida period} of $g$ is
$$\Omega_g^{\can}:=\frac{4^k \pi^k \langle g,g \rangle_{\Gamma_0(\ell N)}}{\eta_g(\ell N)}.$$
The \textbf{Gross period} of $g$ is
$$\Omega_{g,\ell N^-}:=\frac{4^k \pi^k \langle g,g \rangle_{\Gamma_0(\ell N)}}{\langle \phi,\phi \rangle_{B_2}}.$$
\end{definition}

We give a special value interpolation formula for the theta elements introduced in Definition \ref{theta} that we are going to use in our proof.

\begin{theorem} \label{svf}
Let $\chi$ be a finite order character of $\mathcal{G}_c$. Then
$$\chi \left(\theta^2_{c,N^+,\ell N^-}(\phi) \right)=(-1)^c \cdot \Gamma(k/2)^2 \cdot (-D_K)^{k-1} \cdot \delta_K^{-1} \cdot \chi(N^+) \cdot \frac{\left| \mathcal{O}_K^\times\right|^2}{8} \cdot \frac{L(g/K,\chi,k/2)}{\Omega_{g,\ell N^-}}.$$
\end{theorem}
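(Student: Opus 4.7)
The plan is to follow the strategy of Chida--Hsieh \cite{CH2}, where this kind of explicit interpolation formula is established for theta elements attached to quaternionic modular forms on definite quaternion algebras. The statement is essentially a Waldspurger--Gross formula, refined so that the archimedean and local constants appear explicitly, and the only genuine novelty in our setting is that we must handle the extra ramified place $\ell$ coming from the level raising of \autoref{lr}; this enters only through the choice of quaternion algebra $B_2$ (of discriminant $\ell N^-$) and of the Eichler order $\mathcal{R}_2$, so that the formal steps go through unchanged.

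The first step is to expand $\chi(\theta_{c,N^+,\ell N^-}^2(\phi))$ as a double sum indexed by $\mathcal{G}_c\times\mathcal{G}_c$; after the change of variables $(\sigma_1,\sigma_2)\mapsto(\sigma_1,\tau=\sigma_1\sigma_2^{-1})$, the sum over $\sigma_1$ produces a toric period integral on $B_2^\times \backslash \hat{B}_2^\times/\hat{\mathbb{Q}}^\times$ of $\phi$ twisted by $\chi$, evaluated against the translates of the vector
\[
v_\theta := \rho_k^{-1}\!\left(\begin{pmatrix}\sqrt{\beta_2} & -\bar\theta\sqrt{\beta_2} \\ -1 & \theta \end{pmatrix}\right)\cdot(-D_K)^{k/2-1}e_0
\]
by the Gross point $\varsigma_{c,2}$. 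One checks that $v_\theta$ is, up to the scalar $(-D_K)^{k/2-1}\delta_K^{-1/2}$, the highest-weight vector for the embedded torus $\iota_{K,2}(K^\times)$ acting on $\Sym^{k-2}(\mathcal{O})$, so that under the pairing $\langle\cdot,\cdot\rangle_k$ the inner product $\langle v_\theta, \rho_k(t)v_\theta\rangle_k$ computes the matrix coefficient of $\iota_{K,2}$ against the anticyclotomic character $\chi$; this identification is where the $(-D_K)^{k-1}\delta_K^{-1}$ and the local factor at infinity $\Gamma(k/2)^2/\Gamma(k-1)$ enter.

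The second step is to invoke the explicit Waldspurger formula for the definite quaternion algebra $B_2$ (in the normalization used by Chida--Hsieh, cf.\ \cite[Proposition 4.6 and the computation preceding it]{CH2}), which relates the resulting toric period to the central value $L(g/K,\chi,k/2)$ divided by $\langle \phi,\phi\rangle_{B_2}$. Plugging in the definition $\Omega_{g,\ell N^-}=4^k\pi^k\langle g,g\rangle_{\Gamma_0(\ell N)}/\langle\phi,\phi\rangle_{B_2}$ then converts the denominator to the Gross period. The constants $(-1)^c$, $\chi(N^+)$, and $|\mathcal{O}_K^\times|^2/8$ appear by tracking: (i) the sign coming from the Atkin--Lehner element $\tau_2$ acting at the finite bad places (hence the $\chi(N^+)$ at $N^+$ and the sign $(-1)^c$ from the conductor), (ii) the volume factor coming from comparing the counting measure on $\mathcal{G}_c$ with the chosen Tamagawa measure on the torus, which produces $|\mathcal{O}_K^\times|^2/8$ (essentially $[\mathcal{O}_c^\times:\mathbb{Z}^\times]^{-2}$ up to powers of $2$), and (iii) the local Whittaker/matrix-coefficient integrals at primes dividing $\ell N^-$, which for the chosen local vectors at a non-split prime contribute $1$.

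The main obstacle is entirely bookkeeping: ensuring that the normalization of the pairing $\langle\cdot,\cdot\rangle_k$ in \autoref{sec:4}, the normalization of the quaternionic Petersson pairing $\langle\cdot,\cdot\rangle_{B_2}$, and the Haar measures on $K_\mathbb{A}^\times/\mathbb{Q}_\mathbb{A}^\times$ and $B_{2,\mathbb{A}}^\times/\mathbb{Q}_\mathbb{A}^\times$ all match those in \cite{CH2}, and that the local integral at $\ell$ (an inert prime at which $\phi$ is \emph{new}) is indeed $1$ in the normalization of the toric integral, so that $\ell$ contributes nothing to the constant on the right-hand side. Once these compatibilities are verified, the identity in the statement follows directly from the Waldspurger formula and the definition of $\Omega_{g,\ell N^-}$.
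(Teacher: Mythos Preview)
The paper does not give an argument for this theorem at all: its entire proof is the single line ``See \cite[Theorem 3.1]{W}.'' So there is nothing substantive to compare your sketch against in the paper itself.

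That said, your outline is the right one and is presumably what the cited reference does, since Wang's formula is itself an adaptation of the Chida--Hsieh computation in \cite{CH2} to the level-raised form $g$ on the definite quaternion algebra of discriminant $\ell N^-$. Your identification of the main steps (rewriting $\chi(\theta^2)$ as a toric period, recognising $v_\theta$ as the correct weight vector for the embedded torus, invoking the explicit Waldspurger formula, and then unwinding the definition of $\Omega_{g,\ell N^-}$) is accurate, and you are right that the prime $\ell$ enters only through the choice of $B_2$ and contributes trivially to the local toric integral. The one place where your bookkeeping is a bit loose is the provenance of $(-1)^c$ and $|\mathcal{O}_K^\times|^2/8$: these do not come from ``the conductor'' or a naive volume comparison alone, but rather from the explicit local zeta integrals and the normalisation of the test vectors at the split primes dividing $c$ and at infinity in \cite{CH2}; if you were writing this out in full you would want to point to the specific local computations there rather than give the heuristic you wrote. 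But as a summary of the argument behind the cited result, your proposal is correct.
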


\begin{proof}
See \cite[Theorem 3.1]{W}.
\end{proof}

We are going to use also the following formula.

\begin{theorem} \label{pc}
We have
$$\left(\frac{\Omega_g \Omega_{g^K} \eta_g(\ell N)}{ \pi^2 \langle g,g \rangle_{\Gamma_0(\ell N)}} \right)=\mathcal{O}.$$
\end{theorem}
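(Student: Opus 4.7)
The plan is to deduce the identity by decomposing the left-hand side into three pieces, each of which I can analyze separately: a Shimura--Hida style period formula for $g$, a twist formula that converts $\Omega_{g^K}$ into a period for $g$, and a Gauss sum computation.

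First, I would invoke the classical Hida/Shimura comparison between canonical periods and the Petersson inner product, which in our setting reads
$$\Omega_g^{+} \cdot \Omega_g^{-} \;\sim\; \frac{\pi^{2}\,\langle g,g\rangle_{\Gamma_0(\ell N)}}{\eta_g(\ell N)}\qquad(\text{up to } \mathcal{O}^{\times}).$$
This follows from the Eichler--Shimura isomorphism together with Poincar\'e duality on $X_0(\ell N)$: the Petersson pairing on $S_k(\Gamma_0(\ell N))$ corresponds, via Eichler--Shimura, to the cup-product pairing on parabolic cohomology with coefficients in $\Sym^{k-2}$, and the congruence ideal $\eta_g(\ell N)$ measures exactly the failure of $\omega_g^{\pm}$ to be integral generators of the $g$-isotypic components $H^1_{\mathfrak{m}_g}^{\pm}[I_g]$. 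Second, I use the twist formula for canonical periods under the quadratic character $\epsilon_K$. Since $K$ is imaginary quadratic, $\epsilon_K(-1) = -1$, so complex conjugation acts with opposite sign on the $g^K$-modular symbols, which yields
$$\Omega_{g^{K}}^{\pm} \;\sim\; G(\epsilon_K)\cdot \Omega_g^{\mp}\qquad (\text{up to }\mathcal{O}^{\times}),$$
a standard compatibility of Eichler--Shimura with twisting. Third, $G(\epsilon_K)^2 = \pm D_K$, and $p \nmid D_K$ (since $p$ splits in $K$ and $p \nmid 6N$, with $(N,D_K)=1$), so $G(\epsilon_K)$ is a $p$-adic unit in (possibly an unramified enlargement of) $E$.

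Combining these, and writing $\epsilon = (-1)^{k/2-1}$,
$$\Omega_g \cdot \Omega_{g^{K}} \;=\; \Omega_g^{\epsilon} \cdot \Omega_{g^{K}}^{\epsilon} \;\sim\; G(\epsilon_K)\,\Omega_g^{+}\Omega_g^{-} \;\sim\; \frac{\pi^{2}\,\langle g,g\rangle_{\Gamma_0(\ell N)}}{\eta_g(\ell N)}$$
as fractional ideals of $\mathcal{O}$, which after rearrangement gives the statement.

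The main obstacle is making ingredient one rigorous at the prescribed level and integral normalization. What is needed is not merely the archimedean identity of Shimura (which relates $\Omega_g^+\Omega_g^-$ to $\langle g,g\rangle$ up to an algebraic factor), but the refined $p$-integral statement that the algebraic factor is precisely $\eta_g(\ell N)$ up to an $\mathcal{O}^{\times}$-unit. This requires the integrality hypotheses on $\overline{\rho_f}$ from Assumption~\ref{Ass} (in particular the ``big image'' condition and residual irreducibility restricted to $G_{\mathbb{Q}(\sqrt{p^*})}$) to guarantee that $H^1_{\mathfrak{m}_g}^{\pm}[I_g]$ is free of rank one over $\mathcal{O}$, so that the basis $\gamma^{\pm}$ is well-defined up to units and the ratio $\omega_g^{\pm}/\gamma^{\pm} = \Omega_g^{\pm}$ is intrinsic modulo $\mathcal{O}^{\times}$. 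The key input at this step is the generalization by Diamond--Flach--Guo of Hida's congruence theory to weight $k$ newforms, combined with the multiplicity one property of the localized Hecke module, both of which hold under our running assumptions. Once this $p$-integral Shimura formula is in hand, the twist and Gauss sum computations are routine.
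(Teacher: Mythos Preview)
The paper does not give its own proof but simply cites an external reference. Your sketch---Hida's congruence formula $\Omega_g^{+}\Omega_g^{-}\sim \pi^{2}\langle g,g\rangle_{\Gamma_0(\ell N)}/\eta_g(\ell N)$ in the Vatsal/Diamond--Flach--Guo normalization, the sign swap $\Omega_{g^{K}}^{\pm}\sim G(\epsilon_K)\,\Omega_g^{\mp}$ under the odd quadratic twist, and the $p$-unit property of $G(\epsilon_K)$---is the standard route to such statements and is presumably what the cited source carries out, so your approach is aligned with the literature and correct in outline.

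One point worth tightening: the integral twist comparison is where the real work hides. The form $g^{K}$ is new at level $\ell N D_K^{2}$ and its canonical periods are defined there, while $\Omega_g^{\mp}$ lives at level $\ell N$; the passage between the two uses not only the Eichler--Shimura compatibility you mention but also that $(D_K,\ell N p)=1$ (recall $p$ splits in $K$, $(N,D_K)=1$, and the admissible prime $\ell$ is inert, hence unramified, in $K$), so that the twisting operator induces an isomorphism of the relevant localized integral cohomology modules and no spurious congruence factors appear. You already flag the freeness/multiplicity-one input needed for step one; the same type of input, applied now to $g^{K}$ at its own level, is what underlies step two as well.
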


\begin{proof}
See \cite[Theorem 3.4]{AB}.
\end{proof}

\section{Kolyvagin's conjecture} \label{sec:5}

\subsection{Kolyvagin classes}
In this subsection we are going to use Heegner cycles to define classes following a method due to Kolyvagin (see \cite{G}). We follow quite closely the exposition of \cite[$\mathsection 3.2$]{LPV}.

\begin{definition}
A rational prime $q$ is called a \textbf{Kolyvagin prime} for the data $(f,p,K)$ if
\begin{itemize}
\item $q \nmid Np$;
\item $q$ is inert in $K$;
\item $M(q):=\min \{v_\varpi (a_q(f)), v_\varpi(q+1) \} >0$.
\end{itemize}
\end{definition}

We denote by $\mathcal{P}_{\Kol}(f)$ the set of Kolyvagin primes and by $\Lambda_{\Kol}(f)$ the set of Kolyvagin integers, i.e., the square-free products of Kolyvagin primes. For any $c \in \Lambda_{\Kol}(f), c>1$, we define its Kolyvagin index $M(c)$ as the minimum between the values $M(q)$ varying $q$ among the prime divisors of $c$ (and $M(1):=\infty$). Given $q \in \mathcal{P}_{\Kol}(f)$ and $c \in \Lambda_{\Kol}(f)$ we recall that
\begin{itemize}
\item $G_q=\langle \sigma_q \rangle$ is a cyclic group of order $q+1$,
\item $G_c=\displaystyle \prod_{\ell|c}G_\ell$
\end{itemize} 
and we define the Kolyvagin derivative operators as
$$D_q:= \displaystyle \sum_{i=1}^q i \sigma_q^i \in \mathbb{Z}[G_q], \: \: \: \: D_c:= \displaystyle \prod_{\ell|c} D_\ell \in \mathbb{Z}[G_c].$$
We write $\Lambda(K_c)$ for the image of $\AJ_{k,d,c}$ and set
$$z_c:=\displaystyle \sum_{\sigma \in \mathcal{G}_c} \sigma (D_c(y_c)) \in \Lambda(K_c).$$
Furthermore, by \cite[Corollary 2.7]{LV3} and \cite[Proposition 2.8]{LV3}, there exists a Galois-equivariant injection
$$\iota_{K_c,M}:\Lambda(K_c)/\varpi^M \Lambda(K_c) \hookrightarrow H^1(K_c,A_{f,M}).$$
We also notice that the map
$$\res_{K_c/K}:H^1(K,A_{f,M}) \rightarrow H^1(K_c,A_{f,M})^{\mathcal{G}_c}$$
is an isomorphism (we just need to consider the obvious inflation-restriction exact sequence and see that, since $K_c/\mathbb{Q}$ is solvable, $H^0(K_c,A_{f,M})=0$ by \cite[Lemma 3.10]{LV3}).
For any integer $1 \leq M \leq M(c)$ we have that the class $[z_c]_M \in (\Lambda(K_c)/\varpi^M \Lambda(K_c))^{\mathcal{G}_c}$ and so we can define the \textbf{Kolyvagin class} of conductor $c$ and index $M$ attached to $f$ as
$$\kappa_f(M,c):=\res_{K_c/K}^{-1}(\iota_{K_c,M}([z_c]_M)).$$
We can define the \textbf{Kolyvagin set} and the \textbf{strict Kolyvagin set} attached to our data:
$$\kappa_f:=\{\kappa_f(M,c): c \in \Lambda_{\Kol}(f), M \leq M(c)\}, \:\:\:\: \kappa_f^{\st}:=\{\kappa_f(1,c):c \in \Lambda_{\Kol}(f) \}.$$

Finally, we can state Kolyvagin's conjecture about the existence of non-trivial Kolyvagin classes.
\begin{conjecture}
$\kappa_f^{\st} \neq \{0\}$.
\end{conjecture}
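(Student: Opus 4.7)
The plan is to prove the conjecture in two phases: first establish the base case where $\dim_{\mathbb{F}}\Sel_{\mathcal{F}(N^-)}(K,A_{f,1})=1$, then bootstrap to arbitrary rank by the Zhang--Longo--Pati--Vigni induction. In the rank--one base case, the overall idea is to transport the problem from $f$ to a companion form $g$ whose Bloch--Kato Selmer group \emph{vanishes}, so that analytic input (the $p$-part of the Tamagawa number conjecture of Wan) forces the relevant theta element, and hence the Kolyvagin class $\kappa_f(1,1)$, to be non-zero.

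Concretely, for the base case I would first pick an admissible prime $\ell$ for $f$ and apply the level-raising Theorem \ref{lr} to produce a newform $g\in S_k(\Gamma_0(\ell N))$ with $\overline{\rho_g}\cong\overline{\rho_f}$ and $a_\ell(g)\equiv \epsilon_\ell\ell^{k/2-1}\pmod\varpi$. Using a control theorem for the $\mathcal{F}_{\ell N^-}$-Selmer structure, together with the level-raising congruences, I would translate the rank-one hypothesis on $A_{f,1}$ into the vanishing $\Sel_{\mathcal{F}_{\ell N^-}}(K,A_g)=0$. The next and most delicate task is to pass from this signed-type vanishing to the Bloch--Kato vanishing
$$\Sel_{\BK}(\mathbb{Q},A_g)=\Sel_{\BK}(\mathbb{Q},A_{g^K})=0,$$
which I would do by a splitting argument $g/K\leftrightsquigarrow(g,g^K)$, together with the comparisons between signed and Bloch--Kato local conditions at $p$ proved by Hatley--Lei. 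With both Bloch--Kato Selmer groups trivial, Wan's $p$-TNC (Theorem \ref{pTNC}) applies to both $g$ and $g^K$ at the central point $k/2$, giving the Tamagawa-number formulas for $v_\varpi$ of the algebraic parts of $L(g,k/2)$ and $L(g^K,k/2)$ displayed in the strategy. Combining these with the period identity of Theorem \ref{pc} and tracking the Tamagawa exponents at primes dividing $\ell N^-$ yields $v_\varpi\bigl(L(g/K,k/2)/\Omega_{g,\ell N^-}\bigr)=0$. The special value formula (Theorem \ref{svf}) with trivial character then shows $v_\varpi(\theta_{1,N^+,\ell N^-}(\phi))=0$, and Wang's reciprocity law (Theorem \ref{rec}) relates $\theta_{1,N^+,\ell N^-}(\phi)$ to the localization of $\kappa_f(1,1)$ at $\ell$, concluding that $\kappa_f(1,1)\neq 0$.

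For the inductive step, I would follow Zhang's original argument as upgraded to higher weight by Longo--Pati--Vigni: assuming the conjecture fails, select a minimal Kolyvagin integer $c\in\Lambda_{\Kol}(f)$ with some residual class $\kappa_f(1,c)\neq 0$ and manipulate it using the Kolyvagin derivative relations, combined with a Chebotarev-type choice of auxiliary Kolyvagin primes that cut down the rank of the corresponding residual Selmer group by one at each step. Iterating reduces to the rank-one situation already handled, and the contradiction forces $\kappa_f^{\st}\neq\{0\}$.

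The main obstacle I expect is the non-ordinary analogue of Wang's use of the anticyclotomic main conjecture, which is not available in this setting. Overcoming it requires making the signed Selmer machinery of Lei--Loeffler--Zerbes genuinely replace Greenberg's $F^+$ local conditions in every Selmer-theoretic computation, and in particular verifying that the $H^1_i(\mathbb{Q}_p,-)$ subspaces are compatible with the decomposition of $g/K$ and with the Tate-dual pairing used in the global Poitou--Tate argument. A closely related technical difficulty is checking that \emph{all} the hypotheses required by Wan's theorem and by Theorem \ref{lr} (including the ramification conditions in Assumption \ref{Ass1}) are preserved when passing from $f$ to the level-raised form $g$, and survive for the twist $g^K$ simultaneously.
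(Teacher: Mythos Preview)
Your outline of the rank-one base case matches the paper's \S\ref{sec:6} essentially step for step: the choice of an admissible prime $\ell$ (chosen, crucially, so that a nonzero Selmer class localizes nontrivially at $\ell$, cf.\ Lemma \ref{eadm}), the level-raise to $g$, the control theorem yielding $\Sel_{\mathcal{F}_{\ell N^-}}(K,A_g)=0$, the splitting argument to descend to $\mathbb{Q}$, the comparison of signed and Bloch--Kato conditions via Hatley--Lei, the invocation of Wan's $p$-TNC for both $g$ and $g^K$, and the end-game with the special value formula and Wang's reciprocity law are exactly the paper's steps.

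Your description of the inductive step, however, is garbled in a way that misses the actual mechanism. You write ``assuming the conjecture fails, select a minimal Kolyvagin integer $c\in\Lambda_{\Kol}(f)$ with some residual class $\kappa_f(1,c)\neq 0$''---but if the conjecture fails there is no such $c$, so this is self-contradictory. The Zhang/LPV induction carried out in \S\ref{sec:7} is not by contradiction of this sort: it is a direct induction on the $\mathbb{F}$-dimension $r$ of $\Sel_{\mathcal{F}(N^-)}(K,A_{f,1})$, which one first shows is always odd (Proposition \ref{parity}). From rank $r+2$ one passes to rank $r$ by choosing \emph{two} admissible primes $\ell_1,\ell_2$ (not Kolyvagin primes---you need admissibility to invoke Theorem \ref{lr}) and level-raising twice to a form $h$ of level $\ell_1\ell_2 N$; two primes are needed to preserve the parity of $\nu(N^-)$ required by the Heegner hypothesis, so ``cut down the rank by one at each step'' is not quite right either. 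By induction $\kappa_h^{\st}\neq 0$, and the real work is transferring this back to $f$: one uses the equivalence $\loc_{\ell_1}(\kappa_f(1,c))\neq 0 \Leftrightarrow \loc_{\ell_2}(\kappa_h(1,c))\neq 0$ together with a proof that $\ell_2$ is not a base point for $\kappa_h^{\st}$, which in turn rests on the triangulation of Selmer groups (Proposition \ref{triangulation}). None of these ingredients---parity, the two-prime level-raise, the base-locus/triangulation argument---appears in your sketch, and without them the reduction to rank one does not go through.
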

Obviously, the validity of the previous conjecture implies that also $\kappa_f \neq \{0\}$.

\subsection{Reciprocity law}

In \cite{W}, Wang proves a reciprocity law which gives a relation between the theta elements of Definition \ref{theta} and Kolyvagin classes. This result will be crucial in our proof. Thanks to \cite[(3.20)]{W} we have an isomorphism
$$S_k^{B_2}(N^+,\ell N^-)/I_{g,1} \otimes_{\mathbb{F} } \mathbb{F}[\mathcal{G}_c] \cong \displaystyle \bigoplus_{v|\ell} H^1_{\f}(K_{c,v},A_{f,1})$$
and so, thanks to \cite[Proposition 4.7]{CH}, we will consider $\loc_\ell(\sigma(\iota_{K_c,1}([y_c]_1)))$ as an element of the former for any $\sigma \in \mathcal{G}_c$ (here $\loc_\ell:H^1(K_c,A_{f,1}) \rightarrow \bigoplus_{v|\ell} H^1(K_{c,v},A_{f,1})$ is the usual localization map). We also denote by $\phi_1$ a generator of $S_k^{B_2}(N^+,\ell N^-)[I_{g,1}]$.

\begin{theorem} \label{rec}
If Assumption \ref{Ass1} holds, then there exists $u \in \mathcal{O}^\times$ such that
$$\displaystyle \sum_{\sigma \in \mathcal{G}_c} \big\langle \loc_\ell \big(\sigma (\iota_{K_c,1}([y_c]_1 ))\big) , \phi_1 \big\rangle_{B_2, g} \equiv u \cdot \theta_{c,N^+,\ell N^-}(\phi) \mod \varpi.$$
\end{theorem}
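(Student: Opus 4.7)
The plan is to mimic Wang's proof of the corresponding reciprocity law in the ordinary weight $k$ setting \cite[Theorem 4.9]{W}, itself an extension of the first explicit reciprocity law of Bertolini--Darmon. The key observation is that every object appearing in the statement --- the Heegner class $[y_c]_1$, the theta element $\theta_{c,N^+,\ell N^-}(\phi)$, and the Ihara-type isomorphism \cite[(3.20)]{W} --- is constructed from the residual representation $\overline{\rho_f}$ and the geometry of Shimura curves, with no reference to the behavior of $\rho_f$ at $p$. Consequently the ordinariness hypothesis plays no role here, and Wang's argument can be imported with essentially no change.

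The proof decomposes into three stages. First, I would give geometric content to the isomorphism $S_k^{B_2}(N^+,\ell N^-)/I_{g,1}\otimes_{\mathbb{F}}\mathbb{F}[\mathcal{G}_c] \cong \bigoplus_{v\mid \ell} H^1_{\f}(K_{c,v},A_{f,1})$: admissibility of $\ell$ makes $\overline{\rho_f}|_{G_{\mathbb{Q}_\ell}}$ unramified with distinct Frobenius eigenvalues, so each $H^1_{\f}(K_{c,v},A_{f,1})$ is one-dimensional over $\mathbb{F}$ and is realized by the supersingular locus of $X_{N^+,N^-}\otimes\overline{\mathbb{F}_\ell}$, which by Deuring's correspondence is parametrized by the double coset space defining $S_k^{B_2}(N^+,\ell N^-)$. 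Second, I would compute the localization $\loc_\ell([y_c]_1)$ under this identification: since $\ell$ is inert in $K$ and the abelian surface $A_c$ has complex multiplication by $\mathcal{O}_c\subset\mathcal{O}_K$, its reduction $\bar A_c$ is supersingular and corresponds, via Deuring, to the Gross point $P_c\in X_{N^+,\ell N^-}(K_c)$; under the local $\ell$-adic Abel--Jacobi map, the Kuga--Sato piece $\epsilon_k(\tilde Z_c)^{k/2-1}$ then flows to the element of $\Sym^{k-2}(\mathcal{O})$ obtained by applying $\rho_k^{-1}$ of $\begin{pmatrix}\sqrt{\beta_2} & -\bar{\theta}\sqrt{\beta_2} \\ -1 & \theta\end{pmatrix}$ to $(-D_K)^{k/2-1}e_0$, the matrix encoding the change of basis between the uniformization of $A_c$ and the standard basis of $\Sym^{k-2}(\mathcal{O})$ used on the definite side. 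Finally, pairing with $\phi_1$, using Galois equivariance of $\iota_{K_c,1}$ and $\AJ_{k,d,c}$ to pass the sum over $\sigma\in\mathcal{G}_c$ through the pairing, and comparing with Definition \ref{theta} yields the congruence up to a unit $u\in\mathcal{O}^\times$ that absorbs normalization constants (the choice of $\phi_1$, the orientation of $\mathcal{R}_2$, and the Eichler--Shimura scalings).

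The main obstacle is the precise matching in the second stage, where one must track the compatibility of two sets of data. On one side, $\AJ_{k,d,c}$ is defined via the $\ell$-adic \'etale realization of the Kuga--Sato variety together with the projectors $\epsilon_d,\epsilon_k$; on the other side, the pairing $\langle\cdot,\cdot\rangle_k$ views $\Sym^{k-2}(\mathcal{O})$ as polynomials, a description most naturally tied to the crystalline realization of $\bar A_c^{k/2-1}$. Bridging these two descriptions and verifying that the resulting change of basis is exactly the matrix of Definition \ref{theta} is carried out in \cite[Proposition 4.7]{CH}, with supplementary input from \cite[$\mathsection 4$]{W} for the descent to the ring class field $K_c$; none of these ingredients depends on the ordinary/non-ordinary dichotomy at $p$. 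With this compatibility in hand, the three stages combine to give the asserted congruence modulo $\varpi$.
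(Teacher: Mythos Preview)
Your proposal is correct and aligned with the paper's approach: the paper's own proof consists entirely of the citation ``See \cite[Theorem 3.4]{W}'', so both you and the author defer to Wang's argument, and your central observation---that nothing in Wang's proof of the reciprocity law depends on the ordinary/non-ordinary dichotomy at $p$---is exactly the point. Your sketch simply unpacks what the cited reference contains; note only that the paper points to Theorem 3.4 of \cite{W} rather than Theorem 4.9.
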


\begin{proof}
See \cite[Theorem 3.4]{W}.
\end{proof}

\section{Proof of Kolyvagin's conjecture: rank one case} \label{sec:6}

In this section we generalize the work of Wang to prove Kolyvagin's conjecture under the assumption that the rank of some Selmer group is one. We work under the following hypotheses.

\begin{assumption} \label{Ass2}~
\begin{enumerate}[start=1,label={\bfseries \emph{(H\arabic*)}},ref={\bfseries{(H\arabic*)}}]
\item \label{N+} $N^+$ is square-free;
\item \label{N-} $N^-$ is square-free and it consists of an even number of primes;
\item \label{FL} $p>k+1$ and $\left| (\mathbb{F}_p^\times)^{k-1} \right|>5$;
\item \label{BI} $\rho_f(G_\mathbb{Q})$ contains $\SL_2(\mathbb{Z}_p)$;
\item \label{RamN+} $\overline{\rho_f}$ is ramified at any prime $q|N^+$;
\item \label{RamN-} $\overline{\rho_f}$ is ramified at any prime $q|N^-$ such that $q^2 \equiv 1 \mod p$;
\item \label{RamN} there exists a prime $q|N$ such that $\overline{\rho_f}$ is ramified at $q$;
\item \label{Irr} $\overline{\rho_f}$ is absolutely irreducible when restricted to the absolute Galois group of $\mathbb{Q}_p \left( \sqrt{ p^*} \right)$ where $p^*=(-1)^{\frac{p-1}{2}}p$;
\item \label{St} there exists $q' | N^+$ such that $a_{q'}(f)=-(q')^{k/2-1}$ or there exist $q_i' | N^-$ for $i=1,2$ such that $a_{q_i'}(f)=(-1)^i (q_i')^{k/2-1}$.
\end{enumerate}
\end{assumption}

\begin{theorem}
If Assumption \ref{Ass2} is satisfied and $\dim_\mathbb{F}\Sel_{\mathcal{F}(N^-)}(K,A_{f,1})=1$ then $\kappa_f(1,1)$ is not trivial.
\end{theorem}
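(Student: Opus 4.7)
The plan is to follow the six-step strategy outlined in the introduction, mimicking Wang's proof in \cite{W} but systematically replacing ordinary-specific inputs (Greenberg local conditions at $p$, anticyclotomic main conjecture) with the signed-Selmer/Wach module machinery of Lei--Loeffler--Zerbes and Wan's Tamagawa number conjecture. The starting point is to choose an admissible prime $\ell$ for $f$ via Lemma \ref{eadm} and to level-raise $f$ to a newform $g \in S_k(\Gamma_0(\ell N))$ using Theorem \ref{lr}. The rank-one hypothesis on $\Sel_{\mathcal{F}(N^-)}(K,A_{f,1})$ allows us to select $\ell$ so that localization at $\ell$ kills the fixed generator of this Selmer group. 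A standard control-theorem/Poitou--Tate argument, together with the congruence $\overline{\rho_g} \cong \overline{\rho_f}$ and a comparison of local conditions at $\ell$, then yields the vanishing $\Sel_{\mathcal{F}_{\ell N^-}}(K,A_g)=0$.

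Next, since $p$ splits in $K$ and the Heegner hypothesis forces the global root number to change when passing from $K$ to $\mathbb{Q}$, I would decompose the representation $A_g\mid_{G_K}$ under the action of complex conjugation to obtain
\[
\Sel_{\BK}(K,A_g) \;\cong\; \Sel_{\BK}(\mathbb{Q},A_g) \oplus \Sel_{\BK}(\mathbb{Q},A_{g^K}).
\]
The rank-one Selmer vanishing from Step 1 must then be upgraded to Bloch--Kato vanishing by comparing the signed condition $H^1_2(F_v, -)$ with $H^1_{\f}(F_v,-)$ at primes above $p$; here the framework of Hatley--Lei \cite{HL} (applied in the regime $p>k+1$, using Assumption \ref{FL}) supplies the required compatibility between signed, crystalline and Bloch--Kato conditions, yielding $\Sel_{\BK}(\mathbb{Q},A_g) = \Sel_{\BK}(\mathbb{Q},A_{g^K}) = 0$.

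With the two Bloch--Kato Selmer groups trivial, Wan's non-ordinary analytic-rank-zero case of the Tamagawa number conjecture (Theorem \ref{pTNC}) produces the two algebraic $L$-value formulas for $L(g,k/2)$ and $L(g^K,k/2)$ advertised in the introduction. Multiplying these, applying the Atkin--Bertolini period relation of Theorem \ref{pc}, and bookkeeping the local Tamagawa exponents $t_q(\mathbb{Q},A_g)$, $t_q(\mathbb{Q},A_{g^K})$ at the primes $q \mid \ell N^-$ (computed via the Carayol description \eqref{car} and the congruence $a_\ell(g) \equiv \epsilon_\ell \ell^{k/2-1}$) will deliver
\[
v_\varpi\!\left(\frac{L(g/K,k/2)}{\Omega_{g,\ell N^-}}\right) = 0.
\]
Applying the special value formula of Theorem \ref{svf} with trivial character $\chi=\mathbf{1}$ and $c=1$ then gives $v_\varpi(\theta_{1,N^+,\ell N^-}(\phi)) = 0$, and Wang's reciprocity law (Theorem \ref{rec}) finally converts this $p$-indivisibility of the theta element into the non-triviality of $\loc_\ell(\iota_{K,1}([y_1]_1))$, hence of $\kappa_f(1,1)$ itself.

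The main obstacle will be the splitting step and the Tamagawa-period bookkeeping. The former is delicate because in the non-ordinary setting the local conditions at $p$ are not cut out by a Galois-stable line, so the decomposition under complex conjugation must be transferred through the Wach-module formalism rather than through a direct Greenberg filtration; one must verify that $H^1_2(\mathbb{Q}_p,A_g)$ respects the $\pm$-eigenspaces and is self-dual under the local Tate pairing so that duality with Bloch--Kato can be invoked. The latter requires matching the Tamagawa contributions appearing in Wan's formula with those arising from the Gross period $\Omega_{g,\ell N^-}$, which in turn depends on the Assumption \ref{St} hypothesis guaranteeing that at least one local $L$-factor produces the expected unit.
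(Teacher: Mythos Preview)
Your proposal is correct and follows essentially the same route as the paper's Section \ref{sec:6}: choose $\ell$ via Lemma \ref{eadm}, level-raise, run a control theorem to kill $\Sel_{\mathcal{F}_{\ell N^-}}(K,A_g)$, split under complex conjugation, identify with Bloch--Kato over $\mathbb{Q}$ via \cite[Proposition 2.14]{HL}, apply Wan's Theorem \ref{pTNC}, and finish with Theorems \ref{pc}, \ref{svf}, \ref{rec}. On the two obstacles you flag: the splitting at $p$ is easier than you fear, since $p$ splits in $K$ so $K_v=\mathbb{Q}_p$ and the signed condition is literally the same on both sides (the paper in fact splits $\Sel_{\mathcal{F}_{\ell N^-}}$ first and only afterwards compares with $\Sel_{\BK}$ over $\mathbb{Q}$, which avoids any Wach-module gymnastics); and the Tamagawa/Gross-period matching is supplied verbatim by \cite[Proposition 3.2]{W}, which gives $v_\varpi(\Omega_{g,\ell N^-}/\Omega_g^{\can})=\sum_{v\mid \ell N^-} t_v(K,A_g)$, while Assumption \ref{St} is used not for this bookkeeping but to verify the Steinberg-prime hypothesis in Wan's theorem for both $g$ and $g^K$.
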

We split the proof into various steps identified with different lemmas.

\subsection{Raising the level}
First of all we need to choose our admissible prime $\ell$ in a proper way.

\begin{lemma} \label{eadm}
Let $c \in \Sel_{\mathcal{F}(N^-)}(K,A_{f,1}) \setminus \{ 0 \}$. There exist infinitely many admissible primes $\ell$ such that $\loc_{\ell}(c) \neq 0$ where
$$\loc_\ell:H^1(K,A_{f,1}) \rightarrow H^1(K_\ell,A_{f,1})$$
is the usual localization map.
\end{lemma}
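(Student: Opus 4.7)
The strategy is the standard Chebotarev-style argument from Euler system theory (compare \cite[Lemma 4.3]{W}). The plan is to rephrase both the admissibility conditions and the non-triviality of $\loc_\ell(c)$ as the single requirement that the Frobenius of $\ell$ lie in a suitable non-empty conjugacy class inside a finite Galois extension $M/\mathbb{Q}$, and then invoke Chebotarev density.

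First I would attach a field of definition to $c$. Set $L:=\mathbb{Q}(A_{f,1})$, the splitting field of $\overline{\rho_f}$. By \ref{BI} the image of $\overline{\rho_f}$ contains $\SL_2(\mathbb{F}_p)$; combined with \ref{FL}, a standard group-cohomology computation gives $H^1(\Gal(L(K)/K),A_{f,1})=0$. The inflation--restriction sequence then identifies the nonzero class $c$ with a nonzero $\Gal(L(K)/K)$-equivariant homomorphism $\varphi_c\colon G_{L(K)}\to A_{f,1}$. Let $M$ be the fixed field of $\ker\varphi_c$; this is a finite Galois extension of $\mathbb{Q}$ containing $L(K)$, and $\Gal(M/L(K))$ embeds as a nonzero $\Gal(L(K)/\mathbb{Q})$-submodule of $A_{f,1}$.

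Next I would express admissibility and non-triviality as a Frobenius condition. An unramified prime $\ell$ in $M$ satisfies the first three bullets of Definition \ref{adm} exactly when $\Frob_\ell|_{L(K)}$ lies in a specific conjugacy class in $\Gal(L(K)/\mathbb{Q})$: its restriction to $K$ is complex conjugation (giving inertness in $K$), and its action on $A_{f,1}$ has characteristic polynomial forcing distinct eigenvalues $\epsilon_\ell,\epsilon_\ell\ell^{-1}$ for some sign $\epsilon_\ell$ (encoding the fourth bullet via the self-dual twist). The complex conjugation element of $\Gal(L(K)/\mathbb{Q})$ itself realizes this behaviour, so the target class is non-empty. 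On the other hand, since $\ell$ is inert in $K$, local Tate duality identifies $H^1_{\f}(K_\ell,A_{f,1})$ with an explicit quotient of $A_{f,1}$, onto which $\loc_\ell(c)$ is sent to $\varphi_c$ evaluated on any lift of $\Frob_\ell$ to $\Gal(M/\mathbb{Q})$; so $\loc_\ell(c)\neq 0$ is equivalent to choosing such a lift with non-trivial image in $\Gal(M/L(K))$.

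The final step is a Goursat/linear-disjointness argument. Using \ref{BI} together with the absolute irreducibility of $\overline{\rho_f}|_{G_{\mathbb{Q}_p(\sqrt{p^*})}}$ from \ref{Irr}, the nonzero $\Gal(L(K)/\mathbb{Q})$-submodule $\Gal(M/L(K))\subseteq A_{f,1}$ must be all of $A_{f,1}$, so the coset of lifts of our target Frobenius class in $\Gal(M/\mathbb{Q})$ surjects onto $A_{f,1}$. In particular one can pick a lift landing outside the $(\Frob_\ell-1)$-image, producing a non-empty Frobenius conjugacy class in $\Gal(M/\mathbb{Q})$ whose elements realize both admissibility and $\loc_\ell(c)\neq 0$; Chebotarev then yields infinitely many such $\ell$. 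The principal difficulty lies in this last linear-disjointness step: without \ref{BI} and \ref{Irr} it would be possible for $\Gal(M/L(K))$ to sit as a proper submodule of $A_{f,1}$ that is entirely absorbed in the $(\Frob_\ell-1)$-image for the desired Frobenius class, in which case no admissible prime would detect $c$.
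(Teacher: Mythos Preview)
Your overall strategy is exactly the Chebotarev argument behind the result the paper cites (the paper simply refers to \cite[Theorem 6.3]{CH}), so the approach is the same. However, there is a concrete error in your sketch that would make the argument fail as written.

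You claim that the complex conjugation element of $\Gal(L(K)/\mathbb{Q})$ realizes the admissibility conditions. It does not. Since $\det\rho_f=\chi_p$ and $\chi_p(\tau)=-1$, any $\ell$ with $\Frob_\ell|_{L(K)}$ conjugate to $\tau$ satisfies $\ell\equiv -1\pmod p$, which forces $p\mid \ell+1\mid \ell^2-1$ and directly violates the third bullet of Definition~\ref{adm}. In other words, the eigenvalues $\{+1,-1\}$ of $\tau$ on $A_{f,1}$ correspond to the degenerate case $\epsilon_\ell\ell^{-1}=-\epsilon_\ell$, which is precisely what admissibility excludes.

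The fix is standard but must be made explicit: use \ref{BI} to see that $\overline{\rho_f}(G_\mathbb{Q})\supseteq\GL_2(\mathbb{F}_p)$ (it contains $\SL_2(\mathbb{F}_p)$ and the determinant $\overline{\chi_p}$ surjects onto $\mathbb{F}_p^\times$), and use linear disjointness of $K$ from $L$ to choose an element $\sigma\in\Gal(L(K)/\mathbb{Q})$ whose restriction to $K$ is $\tau$ and whose image under $\overline{\rho_f}$ is, say, $\begin{pmatrix}1&0\\0&\delta\end{pmatrix}$ with $\delta\in\mathbb{F}_p^\times\setminus\{\pm 1\}$. Such $\delta$ exists by \ref{FL}, and this $\sigma$ genuinely encodes all four bullets. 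Two smaller points: your assertion that $M/\mathbb{Q}$ is Galois needs a word of justification (decompose $c$ into $\tau$-eigencomponents, or replace $M$ by $M\cdot\tau(M)$), and the irreducibility of $A_{f,1}$ over $\mathbb{F}$ as a $\Gal(L(K)/\mathbb{Q})$-module already follows from \ref{BI} alone; \ref{Irr} is not needed there.
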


\begin{proof}
See \cite[Theorem 6.3]{CH}.
\end{proof}

We fix an admissible prime $\ell$ for $f$ which satisfies the property stated in the previous lemma. Thanks to Theorem \ref{lr} we can find and fix a newform $g$ of level $\ell N$, weight $k$ and trivial character such that its residual Galois representation is isomorphic to that of $f$ (i.e., $\overline{\rho_f} \cong \overline{\rho_{g}}$). We assume that $E$ contains the $p$-adic Hecke fields of $g$ and $g^K:=g \otimes \epsilon_K$ where $\epsilon_K$ is the unramified quadratic character attached to $K$. Recall that $g^K$ is a newform in $S_k(\Gamma_0(\ell ND_K^2))$. Since $a_p(g) \equiv a_p(f) \mod \varpi$ we can notice that both $g$ and $g^K$ are non-ordinary at $p$.

\begin{lemma} \label{sr}
We have $\Sel_{\mathcal{F}(N^-)}(K,A_{g,1}) \cong\Sel_{\mathcal{F}(N^-)}(K,A_{f,1})$.
\end{lemma}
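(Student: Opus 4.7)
The plan is to use the residual isomorphism $\overline{\rho_f}\cong\overline{\rho_g}$ provided by Theorem \ref{lr}, which gives an isomorphism $A_{f,1}\cong A_{g,1}$ of $G_\mathbb{Q}$-modules (hence of $G_K$-modules), and therefore a canonical isomorphism on global cohomology
$$H^1(K,A_{f,1})\xrightarrow{\sim} H^1(K,A_{g,1}).$$
To conclude, I would verify place by place that this isomorphism identifies the local conditions defining $\mathcal{F}(N^-)=\mathcal{F}_1^1(N^-)$ on $A_{f,1}$ with those on $A_{g,1}$.

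For any place $v$ of $K$ with $v\nmid \ell Np$, both $A_{f,1}$ and $A_{g,1}$ are unramified at $v$, so by \cite[Lemma 3.5]{ES} the condition $H^1_\f=H^1_{\unr}$ depends only on the underlying $G_{K_v}$-module and the two subspaces coincide under the isomorphism. At $v\mid\ell$, the form $f$ is unramified at $\ell$, so $A_{f,1}$ is unramified at $v$; the isomorphism $A_{f,1}\cong A_{g,1}$ then forces $A_{g,1}$ to be unramified at $v$ as well, and again both conditions reduce to $H^1_{\unr}$. At primes $v\mid N^+$, the two representations have the same residual Galois module, and the mod $\varpi$ subspace $H^1_\f(K_v,A_{f,1})$ is determined by the image of $H^1_\f(K_v,T_f)$ in $H^1(K_v,A_{f,1})$, which, thanks to Assumption \ref{RamN+} and \cite[Lemma 3.5]{ES} applied at the residual level, coincides with the analogous subspace for $g$. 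At $v\mid N^-$, the condition $H^1_{\ord}$ is defined via the canonical Steinberg-type line $F_v^+ V_f$ of \eqref{car}, whose reduction modulo $\varpi$ is the unique $G_{K_v}$-stable line on which inertia acts by $\chi_p$; by Assumption \ref{RamN-} this residual line is canonically attached to $\overline{\rho_f}=\overline{\rho_g}$, so the ordinary submodules $F_v^+A_{f,1}$ and $F_v^+A_{g,1}$ agree under the isomorphism.

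The main obstacle is the identification of the signed local condition $H^1_2$ at the primes $v\mid p$, since the Coleman maps $\Col_{f,2}$ and $\Col_{g,2}$ are \emph{a priori} constructed from the individual Wach modules of $T_f$ and $T_g$. Here I would invoke the results of Hatley--Lei \cite{HL} that compare signed Selmer groups for congruent non-ordinary modular forms: under the running hypothesis $p>k+1$ and Assumption \ref{Irr}, the subspace $H^1_2(\mathbb{Q}_p,A_{f,1})=H^1_2(\mathbb{Q}_p,A_f)[\varpi]$ depends only on the residual crystalline datum attached to $\overline{\rho_f}|_{G_{\mathbb{Q}_p}}$, because the Wach module modulo $\varpi$ of a crystalline lattice is determined by the residual representation. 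Since $\overline{\rho_f}|_{G_{\mathbb{Q}_p}}\cong\overline{\rho_g}|_{G_{\mathbb{Q}_p}}$, this yields $H^1_2(\mathbb{Q}_p,A_{f,1})=H^1_2(\mathbb{Q}_p,A_{g,1})$ under the residual identification, and the same for the second prime of $K$ above $p$.

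Combining these local matchings with the global isomorphism $H^1(K,A_{f,1})\cong H^1(K,A_{g,1})$, the Selmer subgroups cut out by the product of local conditions correspond, yielding the desired isomorphism $\Sel_{\mathcal{F}(N^-)}(K,A_{g,1})\cong\Sel_{\mathcal{F}(N^-)}(K,A_{f,1})$.
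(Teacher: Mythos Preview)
Your proposal is correct and follows essentially the same strategy as the paper: identify $A_{f,1}\cong A_{g,1}$ via Theorem~\ref{lr}, note that the finite and ordinary local conditions away from $p$ depend only on the residual Galois module, and invoke Hatley--Lei for the signed condition at $p$. The only substantive difference is in the mechanics of the last step: the paper does not appeal directly to a ``Wach module mod $\varpi$'' principle, but instead passes to the cyclotomic tower, where \cite[Lemma 4.4]{HL} gives $H^1_i(\mathbb{Q}_{\infty,p},A_{g,1})\cong H^1_i(\mathbb{Q}_{\infty,p},A_{f,1})$, and then descends back to $\mathbb{Q}_p$ using the control isomorphism $H^1_i(\mathbb{Q}_{\infty,p},A_f)^{\Gal(\mathbb{Q}_{\infty,p}/\mathbb{Q}_p)}\cong H^1_i(\mathbb{Q}_p,A_f)$ from \cite[Remark 2.5]{HL}; this makes the argument at $p$ fully explicit, whereas your version points to the correct source but leaves the actual chain of identifications implicit. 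Away from $p$ the paper is in fact terser than you are---it simply asserts that the finite and ordinary conditions depend only on the module---so your more careful place-by-place discussion (especially at $v\mid\ell$ and $v\mid N^+$) is a welcome elaboration rather than a departure.
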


\begin{proof}
Since the residual representations of $f$ and $g$ are isomorphic we have $A_{g,1} \cong A_{f,1}$. The finite and ordinary local conditions only depend on the Galois module and not on the modular form, so they coincide under the identification
$$H^1(K_v,A_{g,1}) \cong H^1(K_v,A_{f,1})$$
for any $v \nmid p$. In order to prove the isomorphism of the local conditions at $p$ we see that, thanks to \cite[Lemma 4.4]{HL}, we have
$$H^1_i(\mathbb{Q}_{\infty ,p},A_{g,1}) \cong H^1_i(\mathbb{Q}_{\infty,p},A_{f,1}).$$
Furthermore,
\begin{flalign*}
&& H^1_i(\mathbb{Q}_{\infty,p},A_{f,1})^{\Gal(\mathbb{Q}_{\infty ,p}/\mathbb{Q}_p)} &\cong\left( H^1_i(\mathbb{Q}_{\infty,p},A_f)[\varpi] \right)^{\Gal(\mathbb{Q}_{\infty ,p}/\mathbb{Q}_p)}&\\
&& &\cong H^1_i(\mathbb{Q}_{\infty,p},A_f)^{\Gal(\mathbb{Q}_{\infty ,p}/\mathbb{Q}_p)}[\varpi]&\\
&& &\cong H^1_i(\mathbb{Q}_p,A_f)[\varpi] \cong H^1_i(\mathbb{Q}_p,A_{f,1})
\end{flalign*}
where the third isomorphism follows from \cite[Remark 2.5]{HL}. We can conclude that
$$H^1_i(\mathbb{Q}_p,A_{g,1}) \cong H^1_i(\mathbb{Q}_p,A_{f,1})$$
and, in particular,
$$H^1_i(K_v,A_{g,1}) \cong H^1_i(K_v,A_{f,1})$$
for any $v|p$ prime of $K$.
\end{proof}

\subsection{A control theorem on Selmer groups}

\begin{lemma}
The group $\Sel_{\mathcal{F}(\ell N^-)}(K,A_{g,1})$ is trivial.
\end{lemma}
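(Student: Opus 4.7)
The plan is to deduce the vanishing of $\Sel_{\mathcal{F}(\ell N^-)}(K,A_{g,1})$ from the rank-one hypothesis on $f$ combined with Poitou--Tate global reciprocity, following the template of Wang in the ordinary setting but adapted to the signed local conditions at $p$. First, by Lemma \ref{sr} and the hypothesis, $\Sel_{\mathcal{F}(N^-)}(K,A_{g,1})\cong \Sel_{\mathcal{F}(N^-)}(K,A_{f,1})=\mathbb{F}\cdot c$ for some generator $c$; by the choice of the admissible prime $\ell$ via Lemma \ref{eadm} we have $\loc_\lambda(c)\neq 0$ in $H^1(K_\lambda,A_{g,1})$, where $\lambda$ denotes the unique prime of $K$ above $\ell$. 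Since $\ell\nmid pN$, $A_{g,1}$ is unramified at $\lambda$, so this nonzero local class lies in $H^1_{\f}(K_\lambda,A_{g,1})=H^1_{\unr}(K_\lambda,A_{g,1})$.

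Next I would compare the two Selmer structures: $\mathcal{F}(N^-)$ and $\mathcal{F}(\ell N^-)$ agree at every place other than $\lambda$, where the first uses the finite condition and the second uses the ordinary condition $H^1_{\ord}(K_\lambda,A_{g,1})$ built from the subspace $F_\lambda^+A_{g,1}$ coming from the Steinberg-type shape \eqref{car} of the ramified representation $V_g|_{G_{\mathbb{Q}_\ell}}$ produced by Theorem \ref{lr}. An admissibility computation---using $p\nmid\ell^2-1$ and the mod-$\varpi$ congruence on $a_\ell(f)$ in Definition \ref{adm}, which pins down the two Frobenius eigenvalues on $A_{g,1}$ and identifies the ordinary line with the singular one---gives $\dim_{\mathbb{F}}H^1(K_\lambda,A_{g,1})=2$ together with transversality $H^1_{\f}\cap H^1_{\ord}=0$, so $H^1(K_\lambda,A_{g,1})=H^1_{\f}\oplus H^1_{\ord}$.

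Given any $c'\in\Sel_{\mathcal{F}(\ell N^-)}(K,A_{g,1})$, I would apply Poitou--Tate global reciprocity to the pair $(c,c')$, using the essential self-duality $V_g^*(1)\cong V_g$ (a consequence of $\det\rho_g=\chi_p$). Each local condition entering either Selmer structure is its own annihilator under the local Tate pairing---the Bloch--Kato/unramified condition at auxiliary places; the ordinary condition at primes dividing $N^-$, self-dual thanks to the shape \eqref{car}; and the signed condition $H^1_2$ at primes above $p$, self-orthogonal by the Lei--Loeffler--Zerbes formalism as developed in \cite[\S 4]{HL}---so $\langle\loc_v(c),\loc_v(c')\rangle_v=0$ for every $v\neq\lambda$. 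Global reciprocity then forces $\langle\loc_\lambda(c),\loc_\lambda(c')\rangle_\lambda=0$, and since $H^1_{\f}$ and $H^1_{\ord}$ at $\lambda$ are complementary Lagrangians their restricted Tate pairing is perfect, so $\loc_\lambda(c)\neq 0$ forces $\loc_\lambda(c')=0$. But a class with trivial localization at $\lambda$ automatically satisfies the finite condition there, hence $c'\in\Sel_{\mathcal{F}(N^-)}(K,A_{g,1})=\mathbb{F}\cdot c$, and since $c$ itself has $\loc_\lambda(c)\neq 0$ we must have $c'=0$.

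The delicate point in this plan, and the main obstacle compared with the ordinary case, is the verification that the signed local condition $H^1_2$ at primes above $p$ is its own annihilator under local Tate duality; this is the non-ordinary substitute for the well-known self-duality of the Greenberg condition used by Wang, and it has to be extracted from the signed Coleman map machinery of \cite{LLZ} together with its refinements in \cite{HL}. The admissibility-based transversality at $\lambda$ is by contrast a routine linear-algebra check once the explicit Frobenius action on $A_{g,1}$ and the identification of $F_\lambda^+A_{g,1}$ are in hand.
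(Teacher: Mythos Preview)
Your argument is correct and is, in substance, exactly what the paper does: the paper's proof is a one-line appeal to \cite[Proposition~2.2.9]{H}, which packages precisely the Poitou--Tate global duality computation you carry out by hand and yields directly
\[
\dim_\mathbb{F} \Sel_{\mathcal{F}(N^-)}(K,A_{g,1}) - \dim_\mathbb{F} \Sel_{\mathcal{F}(\ell N^-)}(K,A_{g,1})=\dim_\mathbb{F} \loc_\ell \bigl(\Sel_{\mathcal{F}(N^-)}(K,A_{g,1})\bigr)=1.
\]
So the route is the same; you have simply unpacked the black box (the transversality and Lagrangian statements at $\lambda$, and the isotropy of the local conditions elsewhere, are exactly the inputs to Howard's proposition, and the paper records them separately in Lemma~\ref{ltp} and Lemma~\ref{kl}).

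One comment on the point you flag as delicate: the self-orthogonality of $H^1_2(K_v,A_{g,1})$ for $v\mid p$ does not in fact require any direct work with the signed Coleman maps. As the paper notes later (proof of Lemma~\ref{ltp}, via \cite[Proposition~2.14]{HL}), at the residual level one has $H^1_2(K_v,A_{g,1})=H^1_{\f}(K_v,A_{g,1})$, so the required self-duality reduces to the standard Bloch--Kato statement. This removes the only obstacle you identify.
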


\begin{proof}
Following \cite[Proposition 2.2.9]{H} we find that
$$\dim_\mathbb{F} \Sel_{\mathcal{F}(N^-)}(K,A_{g,1}) - \dim_\mathbb{F} \Sel_{\mathcal{F}(\ell N^-)}(K,A_{g,1})=\dim_\mathbb{F} \loc_\ell (\Sel_{\mathcal{F}(N^-)}(K,A_{g,1}))=1$$
where the second equality follows from Lemmas \eqref{eadm} and \eqref{sr}. The statement follows.
\end{proof}

\begin{lemma} \label{ct}
The isomorphism
$$\Sel_{\mathcal{F}(\ell N^-)}(K,A_{g,n})\cong\Sel_{\mathcal{F}(\ell N^-)}(K,A_g)[\varpi^n]$$
holds for any integer $n \geq 1$.
\end{lemma}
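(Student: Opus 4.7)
The plan is a standard control theorem: combine the $\varpi^n$-torsion long exact sequence for $A_g$ with a place-by-place comparison of local conditions, and conclude via the five lemma.

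First, I would check that the global comparison $H^1(K,A_{g,n})\cong H^1(K,A_g)[\varpi^n]$ holds. This reduces to showing $A_g^{G_K}=0$, which follows from the big image hypothesis in Assumption \ref{Ass2} combined with the residual isomorphism $\overline{\rho_g}\cong\overline{\rho_f}$ provided by Theorem \ref{lr}: $\overline{\rho_g}(G_K)$ acts on $A_{g,1}$ without fixed vectors, so $A_{g,1}^{G_K}=0$ and hence $A_g^{G_K}=0$. Consequently the long exact sequence attached to $0\to A_{g,n}\to A_g\xrightarrow{\varpi^n}A_g\to 0$ degenerates to the required isomorphism.

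Second, for each place $v$ of $K$ I would verify that $H^1_{\mathcal{F}(\ell N^-)}(K_v,A_{g,n})$ corresponds to $H^1_{\mathcal{F}(\ell N^-)}(K_v,A_g)[\varpi^n]$ under this comparison, distinguishing four cases. When $v\nmid\ell Np$, the condition is $H^1_{\unr}$, and the identification follows from applying $\varpi^n$-multiplication in the inflation-restriction sequence for $I_v$ together with the divisibility of $A_g^{I_v}$ as an $\mathcal{O}$-module. When $v\mid p$, the identification is built into the very defining formula $H^1_i(\mathbb{Q}_p,A_{f,n}):=H^1_i(\mathbb{Q}_p,A_f)[\varpi^n]$ from Section \ref{sec:2}, transported to $g$ (the argument of Lemma \ref{sr} confirms that the residual analysis is insensitive to the change from $f$ to $g$). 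When $v\mid N^-$, the filtration $0\to F_v^+A_g\to A_g\to A_g/F_v^+A_g\to 0$ has subquotients which are divisible rank-one $\mathcal{O}$-modules, so the snake lemma applied to $\varpi^n$-multiplication yields $F_v^+A_{g,n}=F_v^+A_g[\varpi^n]$ and $A_{g,n}/F_v^+A_{g,n}\cong(A_g/F_v^+A_g)[\varpi^n]$; a short diagram chase then matches the kernels defining $H^1_{\ord}$. Finally, at $v\mid\ell$ the local condition is the zero submodule on both sides.

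Assembling these local equalities and the global isomorphism into a commutative diagram of defining sequences for the two Selmer groups, with middle and right-hand vertical maps both isomorphisms, the five lemma produces the desired conclusion. I expect the chief technical point to be the $v\mid N^-$ case, where the interaction between the ordinary filtration and the $\varpi^n$-torsion must be tracked carefully; the signed case at $p$ is free by design, the unramified case at good primes is standard, and the strict case at $v\mid\ell$ is trivial.
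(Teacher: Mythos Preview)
Your overall strategy matches the paper's, but you have misread the Selmer structure in one place. In the notation $\mathcal{F}_a^b(c)$ the argument in parentheses imposes the \emph{ordinary} condition, not the strict one; hence $\mathcal{F}(\ell N^-)=\mathcal{F}_1^1(\ell N^-)$ puts $H^1_{\ord}$ at every $v\mid \ell N^-$, including $v\mid\ell$. Your fourth case is therefore wrong as written, though harmless: since $\ell\parallel\ell N$, the filtration $F_v^+V_g$ from \eqref{car} exists at $\ell$ and your $v\mid N^-$ analysis applies verbatim. You have also left out $v\mid N^+$, where the condition is $H^1_{\f}$ rather than $H^1_{\unr}$; but this case is automatic, because $H^1_{\f}(K_v,A_{g,n})$ is \emph{defined} as the preimage of $H^1_{\f}(K_v,A_g)$ under the natural map.

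The paper organizes the same ingredients slightly differently: it first gets the global isomorphism from $(A_{g,n})^{G_K}=0$ (invoking \ref{Irr} and $K\subset\mathbb{Q}_p$ rather than big image), then obtains injectivity of the Selmer map from the forward compatibility of all local conditions, and reduces surjectivity to the single check that $H^1(K_v,A_{g,n}/K_v^+A_{g,n})\to H^1(K_v,A_g/K_v^+A_g)$ is injective for $v\mid\ell N^-$. That in turn follows because over $K_v$ the quadratic character $\tau_q$ trivializes, so $(A_g/K_v^+A_g)^{G_{K_v}}=A_g/K_v^+A_g\cong E/\mathcal{O}$ is $\varpi$-divisible. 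Your snake-lemma version of this step is fine; just be sure to fold $v\mid\ell$ into it.
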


\begin{proof}
We follow the idea of \cite[Proposition 1.9(ii)]{CH}. Thanks to \ref{Irr} and since $K \subset \mathbb{Q}_p$ we have $(A_{g,n})^{G_K}=0$ and so, considering the exact sequence $$0 \longrightarrow A_{g,n} \longrightarrow A_g \overset{\varpi^n}{\longrightarrow} A_g \longrightarrow 0$$
we get
\begin{equation} \label{tors}
H^1(K,A_{g,n})\cong H^1(K,A_g)[\varpi^n].
\end{equation}
Then, since the diagrams
$$\begin{tikzcd}
H^1(K,A_{g,n}) \arrow[r, ""] \arrow[d,""]   & H^1(K,A_g) \arrow[d, ""]  \\
H^1(K_v,A_{g,n}) \arrow[r, " "]    & H^1(K_v,A_g) 
\end{tikzcd}$$
are commutative for any prime $v$ of $K$ and the diagrams
$$\begin{tikzcd}
H^1(K_v,A_{g,n}) \arrow[r, " "] \arrow[d,""]   & H^1(K_v,A_g) \arrow[d,""]\\
H^1(K_v,A_{g,n}/K_v^+A_{g,n}) \arrow[r, " "]    & H^1(K_v,A_g/K_v^+A_g) 
\end{tikzcd}$$
are commutative for any $v | q$ with $q|\ell N^-$ a rational prime, we get an injective map
$$\Sel_{\mathcal{F}(\ell N^-)}(K,A_{g,n}) \hookrightarrow \Sel_{\mathcal{F}(\ell N^-)}(K,A_g)[\varpi^n].$$
In order to prove surjectivity we just need to prove that the map
$$H^1(K_v,A_{g,n}/K_v^+A_{g,n}) \rightarrow H^1(K_v,A_g/K_v^+A_g)$$
is injective for any $v | q$ with $q|\ell N^-$ a rational prime. Since $q \parallel \ell N^-$, it follows by isomorphism \eqref{car} that $V_g$ is isomorphic to $\begin{pmatrix} \chi_p &c \\ 0 &1\end{pmatrix}$ with $c$ a $1$-cocycle when the action is restricted to $G_{K_v}$. Then
$$(A_g/K_v^+A_g)^{G_{K_v}}=A_g/K_v^+A_g \cong E/\mathcal{O}$$
and so it is $\varpi$-divisible. Then the multiplication by $\varpi^n$ is surjective on $(A_g/K_v^+A_g)^{G_{K_v}}$ and we can conclude by considering the exact sequence
$$(A_g/K_v^+ A_g)^{G_{K_v}} \rightarrow (A_g/K_v^+ A_g)^{G_{K_v}} \rightarrow H^1(K_v,A_{g,n}/K_v^+ A_{g,n}) \rightarrow H^1(K_v,A_g/K_v^+ A_g).$$
\end{proof}

Since
$$\Sel_{\mathcal{F}(\ell N^-)}(K,A_g)[\varpi] \cong \Sel_{\mathcal{F}(\ell N^-)}(K,A_{g,1})=0,$$
we find that
$$\Sel_{\mathcal{F}(\ell N^-)}(K,A_g)[\varpi^n]=0$$
for any integer $n \geq 1$. Since $A_g$ is a discrete $G_K$-module, by \cite[$\mathsection$ II, Proposition 4.4]{M} and isomorphism \eqref{tors} we get
$$H^1(K,A_g)\cong\varinjlim H^1(K,A_{g,n})\cong\varinjlim H^1(K,A_g)[\varpi^n],$$
so we can conclude that $\Sel_{\mathcal{F}(\ell N^-)}(K,A_g)=0$. In particular, $\Sel_{\mathcal{F}_{\ell N^-}}(K,A_g)=0$.

\subsection{A splitting argument}
\begin{lemma} \label{splitting}
There is a splitting
$$\Sel_{\mathcal{F}_{\ell N^-}}(K,A_g )=\Sel_{\mathcal{F}_{\ell N^-}}(\mathbb{Q},A_g ) \oplus \Sel_{\mathcal{F}_{\ell N^-}}(\mathbb{Q},A_{g^K} ).$$
\end{lemma}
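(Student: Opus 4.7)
The plan is the standard eigenspace/Shapiro decomposition for a quadratic extension, combined with a place-by-place verification that the local conditions defining $\mathcal{F}_{\ell N^-}$ respect the splitting. Write $\Gal(K/\mathbb{Q})=\{1,\tau\}$.

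First I would establish the global cohomological decomposition. Since $p$ is odd, $2\in\mathcal{O}^\times$, and there is an isomorphism of $G_\mathbb{Q}$-modules
$$\mathrm{Ind}^{G_\mathbb{Q}}_{G_K}\bigl(A_g|_{G_K}\bigr) \cong A_g \oplus A_{g^K},$$
where the two summands correspond to the $\pm 1$-eigenspaces of $\tau$ (using $\epsilon_K(\tau)=-1$). Shapiro's lemma then yields a canonical decomposition
$$H^1(K,A_g)\cong H^1(\mathbb{Q},A_g)\oplus H^1(\mathbb{Q},A_{g^K}),$$
compatible with the $\tau$-action, and the same Shapiro decomposition also holds place by place for local cohomology groups.

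Next I would check that each local condition entering $\mathcal{F}_{\ell N^-}$ respects the direct sum. At primes $v\nmid\ell N^-p$ this is the standard functoriality of the Bloch--Kato (equivalently unramified) condition under twisting by the unramified character $\epsilon_K$. At primes $v$ of $K$ lying above rational primes $q\mid\ell N^-$ (inert in $K$ by \ref{N-} and the Heegner hypothesis), the local condition imposed by $\mathcal{F}_{\ell N^-}$ acts uniformly on both eigencomponents and splits immediately.

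The delicate step is the local condition at $p$. Because $p$ splits in $K$, write $v,\bar v$ for the two primes above it; complex conjugation swaps them and $K_v\cong K_{\bar v}\cong\mathbb{Q}_p$. Since $\epsilon_K$ is unramified at $p$ (as $p\nmid D_K$) and its restriction to $G_{\mathbb{Q}_p}$ is trivial (because $p$ splits in $K$), one has $A_{g^K}|_{G_{\mathbb{Q}_p}}=A_g|_{G_{\mathbb{Q}_p}}$. Hence the Wach modules of the corresponding lattices coincide, the Coleman maps $\Col_{g,2}$ and $\Col_{g^K,2}$ are identified, and so are the subgroups $H^1_2(\mathbb{Q}_p,A_g)=H^1_2(\mathbb{Q}_p,A_{g^K})$. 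Under the Shapiro decomposition at $p$, the direct sum $H^1_2(K_v,A_g)\oplus H^1_2(K_{\bar v},A_g)$ maps isomorphically, via the $\pm$-eigenspace splitting, to $H^1_2(\mathbb{Q}_p,A_g)\oplus H^1_2(\mathbb{Q}_p,A_{g^K})$. Assembling these compatibilities at every place yields the claimed Selmer splitting.

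The main obstacle is precisely this last point: verifying that the signed local conditions of Lei--Loeffler--Zerbes are insensitive to twisting by $\epsilon_K$ at $p$. The argument rests on $p$ splitting in $K$, which makes $\epsilon_K$ trivial on the decomposition group at $p$ and leaves the whole signed-Selmer construction unchanged under the twist; without this hypothesis the decomposition of local conditions at $p$ would fail.
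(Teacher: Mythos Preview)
Your approach is correct and essentially the same as the paper's: both arguments decompose $H^1(K,A_g)$ into $\pm 1$-eigenspaces for complex conjugation (the paper via inflation--restriction, using that $H^i(\Gal(K/\mathbb{Q}),-)$ vanishes on $\mathcal{O}$-modules for $i=1,2$ since $2\in\mathcal{O}^\times$; you via Shapiro's lemma and the decomposition of the induced representation), and then check place by place that the local conditions of $\mathcal{F}_{\ell N^-}$ respect the splitting. Your discussion of the signed condition at $p$ is in fact more explicit than the paper's, which just says ``obvious by definition'' since $p$ splits in $K$.

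One small slip: you justify the match of Bloch--Kato conditions at primes $v\nmid \ell N^- p$ by calling $\epsilon_K$ ``unramified'', but $\epsilon_K$ is ramified at primes $q\mid D_K$. The conclusion is still correct there (since $(N,D_K)=1$ forces $A_g$ to be unramified at such $q$, and one checks directly that the unramified conditions over $K_v$ and over $\mathbb{Q}_q$ for $A_g$ and $A_{g^K}$ correspond under the eigenspace decomposition, as the paper does with a commutative diagram), but the reason you give does not apply at those primes.
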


\begin{proof}
We follow the idea of the proof of \cite[Proposition 6.2]{LV2}.
Since $\Sel_{\mathcal{F}_{\ell N^-}}(K,A_g)$ is an $\mathcal{O}$-module (where $\charr \mathcal{O} \neq 2$) endowed with an action of $\Gal(K/\mathbb{Q})$ we have the splitting
$$\Sel_{\mathcal{F}_{\ell N^-}}(K,A_g)= \Sel_{\mathcal{F}_{\ell N^-}}(K,A_g)^{\tau=1} \oplus \Sel_{\mathcal{F}_{\ell N^-}}(K,A_g)^{\tau=-1}$$ 
where $\tau$ is the complex conjugation. First of all, we prove that
$$\Sel_{\mathcal{F}_{\ell N^-}}(K,A_g)^{\tau=1}=\Sel_{\mathcal{F}_{\ell N^-}}(\mathbb{Q},A_g).$$
We remark that a $2$-torsion group which is also an $\mathcal{O}$-module must be trivial and it is the situation for $H^i(C_2,M)$ with $i=1,2$, $C_2$ a cyclic group of order $2$ and $M$ any $C_2$-module. Using this fact we find an isomorphism
$$H^1(\mathbb{Q},A_g) \cong H^1(K,A_g)^{\Gal(K/\mathbb{Q})}.$$
We need to prove that our Selmer groups correspond under this isomorphism focusing on the different local conditions at primes $q$ of $\mathbb{Q}$ and $v|q$ of $K$:
\begin{itemize}
\item if $q|\ell N^-$, we just need to consider the commutative diagram
$$\begin{tikzcd}
H^1(\mathbb{Q},A_g) \arrow[d,""] \arrow[r,""] & H^1(K,A_g)^{\Gal(K/\mathbb{Q})} \arrow[d,""] \\
H^1(\mathbb{Q}_{q},A_g) \arrow[r,""] & H^1(K_v,A_g)^{\Gal(K_v/\mathbb{Q}_{q})} 
\end{tikzcd}$$
where all the horizontal arrows are isomorphisms by the previous considerations;
\item if $q \nmid \ell pN^-$, we consider the commutative diagram
$$\begin{tikzcd}
H^1(\mathbb{Q},V_g) \arrow[rr,""] \arrow[dd,""] \arrow[rd,""] & &H^1(K,V_g)^{\Gal(K/\mathbb{Q})} \arrow[dd,""] \arrow[dr,""] & \\
 &H^1(\mathbb{Q},A_g) \arrow[crossing over,rr,""]  & & H^1(K,A_g)^{\Gal(K/\mathbb{Q})} \arrow[dd,""] \\
H^1(\mathbb{Q}_q,V_g)\arrow[dd,""] \arrow[dr,""] \arrow[rr,""] & &H^1(K_v, V_g)^{\Gal(K_v/\mathbb{Q}_q)} \arrow[dd,""] \arrow[dr,""] & \\
 & H^1(\mathbb{Q}_q,A_g) \arrow[from=uu,crossing over] \arrow[crossing over,rr,""] & & H^1(K_v,A_g)^{\Gal(K_v/\mathbb{Q}_q)}\\
 H^1(I_{q},V_g) \arrow[rr,""] & & H^1(I_v,V_g)^{\frac{I_{q}}{I_v}} &
\end{tikzcd}$$
where all the horizontal arrows are isomorphisms by the previous considerations;
\item if $q=p$, it is obvious by definition.
\end{itemize}
With the same considerations we can prove that
$$\Sel_{\mathcal{F}_{\ell N^-}}(K,A_g)^{\tau=-1}=\Sel_{\mathcal{F}_{\ell N^-}}(\mathbb{Q},A_{g^K})$$
since
$$\Sel_{\mathcal{F}_{\ell N^-}}(K,A_g)^{\tau=-1}=\Sel_{\mathcal{F}_{\ell N^-}}(K,A_g \otimes \epsilon_K)^{\tau=1}$$
and $A_g \otimes \epsilon_K \cong A_{g^K}$.
\end{proof}

\begin{lemma}
We have
$$\Sel_{\BK}(\mathbb{Q},A_g)=\Sel_{\mathcal{F}_{\ell N^-}}(\mathbb{Q},A_g)$$
and
$$\Sel_{\BK}(\mathbb{Q},A_{g^K})=\Sel_{\mathcal{F}_{\ell N^-}}(\mathbb{Q},A_{g^K}).$$
\end{lemma}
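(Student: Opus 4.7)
The two Selmer structures $\BK$ and $\mathcal{F}_{\ell N^-}$ over $\mathbb{Q}$ use the same local condition $H^1_{\f}$ at every prime $q \nmid \ell N^- p$, so the plan is to show that they also agree at the remaining primes: namely, that $H^1_{\f}(\mathbb{Q}_q, A_g) = 0$ for every $q \mid \ell N^-$, and that $H^1_{\f}(\mathbb{Q}_p, A_g) = H^1_2(\mathbb{Q}_p, A_g)$. An identical argument will then handle $A_{g^K}$.

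First I would treat the primes $q \mid \ell N^-$. Because $q \parallel \ell N$ and $g$ is a newform of that level, the representation $V_g\vert_{G_{\mathbb{Q}_q}}$ is of Steinberg type, i.e.
$$V_g\vert_{G_{\mathbb{Q}_q}} \cong \begin{pmatrix} \tau_q \chi_p & \ast \\ 0 & \tau_q \end{pmatrix},$$
with $\tau_q$ unramified quadratic and the extension class genuinely nontrivial on inertia. Since $\chi_p$ is unramified away from $p$, the inertia-fixed subspace $V_g^{I_q}$ is one-dimensional, and $\Frob_q$ acts on it by $\tau_q(\Frob_q)\,q = \pm q$, which is not equal to $1$ in $E$. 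It follows that
$$H^1_{\f}(\mathbb{Q}_q, V_g) = H^1_{\unr}(\mathbb{Q}_q, V_g) = V_g^{I_q}/(\Frob_q - 1)V_g^{I_q} = 0,$$
and hence $H^1_{\f}(\mathbb{Q}_q, A_g) = 0$ by definition, matching the trivial $\mathcal{F}_{\ell N^-}$-condition. Note that the admissibility requirement $p \nmid \ell^2 - 1$ is precisely what one uses to make this argument work also at $q = \ell$.

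The more delicate step is the comparison of local conditions at $p$. Here I would invoke the Lei--Loeffler--Zerbes framework together with \cite{HL} to show that, at the bottom level of the cyclotomic tower, the signed local condition $H^1_2(\mathbb{Q}_p, V_g)$ and the Bloch--Kato crystalline condition $H^1_{\f}(\mathbb{Q}_p, V_g)$ define the same one-dimensional $E$-subspace of $H^1(\mathbb{Q}_p, V_g)$; this is the non-ordinary analogue of Kobayashi's observation for supersingular elliptic curves. The equality then descends to $A_g$ through the constructions recalled in \autoref{sec:2}: both $H^1_{\f}(\mathbb{Q}_p, A_g)$ and $H^1_2(\mathbb{Q}_p, A_g)$ arise as orthogonal complements, under the local Tate pairing, of subgroups of $H^1(\mathbb{Q}_p, T_g)$ whose images in $H^1(\mathbb{Q}_p, V_g)$ coincide by the previous identification.

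Combining these two steps proves the first equality. The claim for $A_{g^K}$ follows by the same argument: since $p$ splits in $K$, the character $\epsilon_K$ is trivial on $G_{\mathbb{Q}_p}$, so $V_{g^K}\vert_{G_{\mathbb{Q}_p}} = V_g\vert_{G_{\mathbb{Q}_p}}$ and the $p$-adic comparison transfers verbatim; at a prime $q \mid \ell N^-$ the twist by $\epsilon_K$ is unramified (because $(N, D_K) = 1$) and only changes the Frobenius eigenvalue on $V_g^{I_q}$ by a sign, leaving the vanishing of $H^1_{\f}(\mathbb{Q}_q, A_{g^K})$ intact. I expect the main obstacle to be the $p$-adic comparison: one must check that the coincidence of the signed and crystalline local conditions survives the passage from $V_g$ to $A_g$, since the signed conditions are intrinsically defined via Coleman maps on the full cyclotomic Iwasawa cohomology rather than as subgroups of $H^1(\mathbb{Q}_p, \cdot)$ directly.
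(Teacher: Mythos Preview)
Your approach matches the paper's: both reduce to showing $H^1_{\f}(\mathbb{Q}_q, A_g) = 0$ for $q \mid \ell N^-$ via the Steinberg shape of $V_g\vert_{G_{\mathbb{Q}_q}}$, and both compare the signed and Bloch--Kato conditions at $p$. The paper dispatches the $p$-adic step in one line by citing \cite[Proposition 2.14]{HL}, which already gives $\Sel_{\BK}(\mathbb{Q}, A_g) = \Sel_2(\mathbb{Q}, A_g)$ as a global equality; this is exactly the black box that resolves your worry about passing from $V_g$ to $A_g$, so there is no need to rebuild it from the Coleman maps and local duality.

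One small correction: the admissibility condition $p \nmid \ell^2 - 1$ is \emph{not} what makes the argument at $q = \ell$ go through. The vanishing of $V_g^{I_\ell}/(\Frob_\ell - 1)V_g^{I_\ell}$ holds simply because $\pm \ell \neq 1$ in the characteristic-zero field $E$; no congruence condition modulo $p$ is needed at the $V_g$ level. What is actually required at $q = \ell$ is that $V_g$ be ramified there (so that $V_g^{I_\ell}$ is one-dimensional), and this follows from $g$ being new at $\ell$.
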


\begin{proof}
We prove the lemma only for $A_g$ since the result for $A_{g^K}$ can be proved in the same way. Thanks to \cite[Proposition 2.14]{HL} we know that
$$\Sel_{\BK}(\mathbb{Q},A_g)=\Sel_2(\mathbb{Q},A_g),$$
so we just need to prove that
$$\Sel_2(\mathbb{Q},A_g)=\Sel_{\mathcal{F}_{\ell N^-}}(\mathbb{Q},A_g).$$
The local conditions of these Selmer groups coincide for all the primes $q$ of $\mathbb{Q}$ which do not divide $\ell N^-$. In order to conclude we prove that $H^1_{\f}(\mathbb{Q}_q,A_g)=0$ for any $q | \ell N^-$. We follow the proof of \cite[Lemma 2.54]{LV}. First of all we recall that, since $q \parallel \ell N^-$, the isomorphism \eqref{car} tells us that the restriction of $V_g$ to $G_{\mathbb{Q}_q}$ is isomorphic to $\begin{pmatrix} \tau_q \chi_p & c \\ 0 & \tau_q\end{pmatrix}$ and so the restriction to $I_q$ is isomorphic to $\begin{pmatrix} 1 & c \\ 0 & 1\end{pmatrix}$. Furthermore, we know by \cite[Theorem 3.26]{HB} that $V_g$ is ramified at $q$ and so, in particular, $c$ is not trivial when restricted to $I_{q}$ and so $V_g^{I_q} \cong E(1)$. Then
$$H^1_f(\mathbb{Q}_q,V_g) \cong V_g^{I_q}/(\Frob_q-1)V_g^{I_q} =0$$
since $\Frob_q-1$ acts isomorphically on $V_g^{I_q}$ ($\chi_p(\Frob_q)=q$). Finally $H^1_f(\mathbb{Q}_{q},A_g)=0$.
\end{proof}

Finally, we have that both $\Sel_{\BK}(\mathbb{Q},A_g)$ and $\Sel_{\BK}(\mathbb{Q},A_{g^K})$ are trivial.

\subsection{Non-ordinary $p$-part of the Tamagawa number conjecture in rank 0}
\begin{lemma}
For any $q | N^+$ we have $c_q(\mathbb{Q},A_g)=c_q(\mathbb{Q},A_{g^K})=1$. For any $q | D_K$ we have $c_q(\mathbb{Q},A_{g^K})=1$.
\end{lemma}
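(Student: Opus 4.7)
The plan is to split into three cases and compute each Tamagawa number directly by analyzing the $I_q$-invariants and the action of $\Frob_q$, as in \cite[Lemma 2.54]{LV} (which the previous lemma already invoked). Recall that for $q \ne p$ one has $H^1_{\f}(\mathbb{Q}_q, A_g) \subseteq H^1_{\unr}(\mathbb{Q}_q, A_g)$, so it suffices to bound the latter.

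For $q \mid N^+$ and $A_g$: since $N^+$ is square-free and $q \ne \ell$, $g$ is Steinberg at $q$ and by \cite{Car} the local representation has the shape
$$V_g|_{G_{\mathbb{Q}_q}} \cong \begin{pmatrix} \tau_q \chi_p & c \\ 0 & \tau_q \end{pmatrix}$$
with $\tau_q$ an unramified quadratic character and $c$ a $1$-cocycle. By Theorem \ref{lr} together with \ref{RamN+}, $\overline{\rho_g} \cong \overline{\rho_f}$ is ramified at $q$, so $\overline{c}|_{I_q}$ is nontrivial and there exists $\sigma_0 \in I_q$ with $c(\sigma_0) \in \mathcal{O}^\times$. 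Writing $v = ae_1 + be_2$, the fixed-point condition $c(\sigma_0)b \in \mathcal{O}$ forces $b = 0$, so $V_g^{I_q} = \langle e_1 \rangle$ and $A_g^{I_q} \cong E/\mathcal{O}$ is $\varpi$-divisible, with $\Frob_q$ acting in both cases by the eigenvalue $\lambda_q := \tau_q(\Frob_q)\chi_p(\Frob_q) = \pm q$. Since $\lambda_q - 1 \ne 0$, multiplication by $\lambda_q - 1$ is invertible on $V_g^{I_q}$ and surjective on the divisible module $A_g^{I_q}$. Hence $H^1_{\f}(\mathbb{Q}_q, V_g) = 0$, so $H^1_{\f}(\mathbb{Q}_q, A_g) = 0$, and also $H^1_{\unr}(\mathbb{Q}_q, A_g) = A_g^{I_q}/(\Frob_q - 1)A_g^{I_q} = 0$, giving $c_q(\mathbb{Q}, A_g) = 1$.

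For $q \mid N^+$ and $A_{g^K}$: since $q \mid N^+$ splits in $K$, $\epsilon_K|_{G_{\mathbb{Q}_q}}$ is trivial, so $V_{g^K}|_{G_{\mathbb{Q}_q}} \cong V_g|_{G_{\mathbb{Q}_q}}$ and the previous case applies verbatim. For $q \mid D_K$ and $A_{g^K}$: because $(N, D_K) = 1$ and $q \ne \ell$, $V_g$ is unramified at $q$, so $I_q$ acts on $V_{g^K} = V_g \otimes \epsilon_K$ purely through the ramified quadratic character $\epsilon_K$. Picking $\sigma \in I_q$ with $\epsilon_K(\sigma) = -1$, any $v \in A_{g^K}^{I_q}$ would satisfy $2v = 0$; since $A_{g^K}$ is $p$-primary and $p > 2$ by \ref{FL}, we get $A_{g^K}^{I_q} = 0$, hence $H^1_{\unr}(\mathbb{Q}_q, A_{g^K}) = 0$ and $c_q(\mathbb{Q}, A_{g^K}) = 1$.

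The main technical step is the divisibility of $A_g^{I_q}$ in the first case, which is exactly what prevents a residual finite contribution to $c_q$ and relies essentially on Assumption \ref{RamN+}: it is the ramification of the residual representation $\overline{\rho_g}$, rather than merely of $\rho_g$, that forces the second coordinate of an $I_q$-fixed class in $A_g = V_g/T_g$ to vanish. The two remaining cases follow by formal reductions (splitting of $q$ in $K$, or triviality of $2$-torsion in a $p$-primary module).
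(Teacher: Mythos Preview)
Your proof is correct. For $q \mid N^+$ your argument is essentially the same as the paper's: both use the Steinberg shape of $V_g|_{G_{\mathbb{Q}_q}}$ together with the residual ramification hypothesis \ref{RamN+} to pin down $A_g^{I_q} \cong E/\mathcal{O}$ as a divisible module (you compute $H^1_{\unr}=0$ directly via surjectivity of $\Frob_q-1$, while the paper quotes \cite[Lemma 3.5]{ES}, but this is the same content). The reduction of $A_{g^K}$ to $A_g$ at split primes is also identical.

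For $q \mid D_K$ you take a genuinely different and more elementary route. You observe that since $V_g$ is unramified at $q$ the inertia $I_q$ acts on $A_{g^K}$ purely through the ramified quadratic character $\epsilon_K$, so $A_{g^K}^{I_q}$ is killed by $2$ and hence vanishes; this gives $H^1_{\unr}(\mathbb{Q}_q,A_{g^K})=0$ immediately. The paper instead establishes a splitting
\[
\frac{H^1_{\unr}(K_v,A_g)}{H^1_{\f}(K_v,A_g)} \cong \frac{H^1_{\unr}(\mathbb{Q}_q,A_g)}{H^1_{\f}(\mathbb{Q}_q,A_g)} \oplus \frac{H^1_{\unr}(\mathbb{Q}_q,A_{g^K})}{H^1_{\f}(\mathbb{Q}_q,A_{g^K})}
\]
and deduces the claim from the unramifiedness of $A_g$ at $q$. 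Your argument is cleaner for the lemma itself; the paper's detour, however, has the side benefit that this same splitting (labelled \eqref{splitting2} there) is invoked again immediately afterwards for primes $q \mid \ell N^-$ when combining the two rank-zero Tamagawa formulas into one over $K$. If you adopt your direct argument, you will still need to supply that splitting separately at the point where the paper writes ``observing that isomorphism \eqref{splitting2} holds also for $q \mid \ell N^-$''.
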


\begin{proof}
We follow the proof of \cite[Proposition 4.37]{LV}.
We fix $q|N^+$. We know that $A_g \cong A_{g^K}$ as $G_K$-modules and, since $q$ splits in $K$, as $G_{\mathbb{Q}_q}$-modules. Then $c_q(\mathbb{Q},A_g)=c_q(\mathbb{Q},A_{g^K})$. By \ref{N+} the isomorphism \eqref{car} (which holds for any $q \parallel N$) tells us that $V_g$ is isomorphic to $ \begin{pmatrix} \tau_q \chi_p &c \\ 0 & \tau_q \end{pmatrix}$ when the action is restricted to $G_{\mathbb{Q}_q}$ and it is isomorphic to $ \begin{pmatrix} 1 & c \\ 0 &1 \end{pmatrix}$ when the action is restricted to $I_q$. Then, since $c$ is non-trivial by \ref{RamN+} and since $A_g \cong (E/\mathcal{O})^2$ we have
$$A_g^{I_q} \cong E/\mathcal{O} \oplus \varpi^{-t} \mathcal{O}/\mathcal{O}$$
for an integer $t \geq 0$. By contradiction, we assume that $t>0$. Then
$$A_{g,1}^{I_q}=A_g^{I_q}[\varpi] \cong E/\mathcal{O}[\varpi] \oplus (\varpi^{-t}\mathcal{O}/\mathcal{O})[\varpi]=E/\mathcal{O}[\varpi] \oplus \varpi^{-1}\mathcal{O}/\mathcal{O}=(E/\mathcal{O}[\varpi])^2=A_{g,1}$$
but it contradicts \ref{RamN+}. Finally, $A_g^{I_q} \cong E/\mathcal{O}$ is divisible and so $c_q(\mathbb{Q},A_g)=1$ by \cite[Lemma 3.5]{ES}.
Now, fix $q | D_K$ and $v|q$ a prime of $K$. Using the commutativity of the diagram
$$\begin{tikzcd}
H^1(\mathbb{Q}_q,V_g) \arrow[dd,""] \arrow[rr,""] \arrow[rd,""] & &H^1(K_v,V_g)^{\Gal(K_v/\mathbb{Q}_q)} \arrow[dd,""] \arrow[rd,""] &\\
& H^1(\mathbb{Q}_q,A_g) \arrow[crossing over,rr,""] & & H^1(K_v,A_g)^{\Gal(K_v/\mathbb{Q}_q)} \arrow[dd,""] \\
H^1(I_q,V_g) \arrow[rd,""] \arrow[rr,""]& &H^1(I_v,V_g)^{I_q/I_v} \arrow[rd,""] & \\
& H^1(I_q,A_g) \arrow[from=uu,crossing over] \arrow[rr,""]& &H^1(I_v,A_g)^{I_q/I_v}
\end{tikzcd}$$
where all the horizontal arrows are isomorphisms we can find two splittings
$$H^1_{\unr}(K_{v},A_g) \cong H^1_{\unr}(\mathbb{Q}_q,A_g) \oplus H^1_{\unr}(\mathbb{Q}_q,A_{g^K}),$$
$$H^1_{\f}(K_{v},A_g) \cong H^1_{\f}(\mathbb{Q}_q,A_g) \oplus H^1_{\f}(\mathbb{Q}_q,A_{g^K})$$
with arguments similar to the ones in the proof of lemma \ref{splitting}. It follows that
\begin{equation} \label{splitting2}
\frac{H^1_{\unr}(K_{v},A_g) }{H^1_{\f}(K_{v},A_g)} \cong \frac{H^1_{\unr}(\mathbb{Q}_q,A_g)}{H^1_{\f}(\mathbb{Q}_q,A_g)} \oplus \frac{H^1_{\unr}(\mathbb{Q}_q,A_{g^K})}{H^1_{\f}(\mathbb{Q}_q,A_{g^K})}.
\end{equation}
We know that $A_g$ is unramified at $q$ (because $(N,D_K)=1$) and so
$$[H^1_{\unr}(K_{v},A_g):H^1_{\f}(K_{v},A_g)]=[H^1_{\unr}(\mathbb{Q}_q,A_g):H^1_{\f}(\mathbb{Q}_q,A_g)]=1.$$
Finally
$$c_q(\mathbb{Q},A_{g^K})=[H^1_{\unr}(\mathbb{Q}_q,A_{g^K}):H^1_{\f}(\mathbb{Q}_q,A_{g^K})]=1.$$

\end{proof}

Now, we are going to use the following crucial result of Wan, i.e., the validity of the $p$-part of the Tamagawa number conjecture for non-ordinary modular forms of analytic rank zero under some technical conditions.

\begin{theorem} \label{pTNC}
Let $f \in S_k(\Gamma_0(N))$ be a normalized eigenform and let $p \nmid N$ be a rational prime. Assume that $2|k$, $k<p$, $T$ is residually irreducible over $G_{\mathbb{Q}_p}$, $\SL_2(\mathbb{Z}_p) \subset \rho_f(G_\mathbb{Q})$ and there exists a prime $q' \parallel N$ such that $a_{q'}(f)=-(q')^{k/2-1}$.
If $L(f,k/2) \neq 0$ then
$$v_\varpi \left(\frac{L(f,k/2)}{(2\pi i)^{k/2-1} \Omega_f} \right) =\length_\mathcal{O} \Sel_{\BK}(\mathbb{Q},A_f) + \displaystyle \sum_{q|Np} t_q(\mathbb{Q},A_f).$$
\end{theorem}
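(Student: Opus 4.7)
The plan is to derive the stated equality from a signed cyclotomic Iwasawa Main Conjecture for $f$ and then specialize at the trivial character of $\Gamma$; this is essentially Wan's strategy. One works throughout in the signed framework of \cite{LLZ} already invoked in this paper, which is precisely designed for non-ordinary eigenforms.

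First, I would attach to $f$ two signed $p$-adic $L$-functions $L_p^{(i)}(f) \in \Lambda_E(\Gamma)$, for $i=1,2$, via the Coleman maps $\Col_{f,i}$ introduced in \autoref{sec:2}; these interpolate the algebraic parts of the critical values $L(f,\chi,k/2)$ as $\chi$ ranges over finite-order characters of $\Gamma$. In parallel, I would introduce the Iwasawa-theoretic signed Selmer groups $\Sel_i(\mathbb{Q}_\infty, A_f)$, imposing the signed local condition at the unique prime above $p$ and the unramified condition elsewhere.

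Second, I would establish the signed cyclotomic Main Conjecture
$$\charr_\Lambda \Sel_i(\mathbb{Q}_\infty, A_f)^\vee = \bigl(L_p^{(i)}(f)\bigr)$$
by combining two divisibilities: the Selmer-side upper bound using the Beilinson--Flach Euler system of Lei--Loeffler--Zerbes projected through the signed Coleman maps, and the $L$-side lower bound via an Eisenstein congruence argument on a unitary group of signature $(3,1)$ adapted to the non-ordinary setting. This is the substantive content of Wan's work and is by far the hardest step. Once the Main Conjecture is in hand, I would descend to $\mathbb{Q}$ via a control theorem in the spirit of \cite{HL}, whose invariants for $\Gamma$ recover $\Sel_i(\mathbb{Q}, A_f)$ up to a controlled error; by \cite[Proposition 2.14]{HL} the latter coincides with $\Sel_{\BK}(\mathbb{Q}, A_f)$. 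Evaluating the Main Conjecture at the trivial character, the analytic side reduces to $v_\varpi\!\bigl(L(f,k/2)/(2\pi i)^{k/2-1}\Omega_f\bigr)$ corrected by an Euler-like factor at $p$ of valuation $t_p(\mathbb{Q}, A_f)$, while the algebraic side becomes $\length_\mathcal{O} \Sel_{\BK}(\mathbb{Q}, A_f)$ plus $\sum_{q|N} t_q(\mathbb{Q}, A_f)$ arising from the discrepancy between Bloch--Kato and unramified local conditions at primes of bad reduction. Non-vanishing of $L(f,k/2)$ forces both sides to be finite, so the two divisibilities collapse to the desired equality.

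The principal obstacle is the Main Conjecture itself: the Euler system divisibility demands careful control of the specialization of Beilinson--Flach classes at the self-dual cyclotomic point, where the signed Coleman machinery is essential precisely because $f$ is non-ordinary; meanwhile the Eisenstein side requires constructing congruences on a unitary group in a regime where the classical ordinary projector is unavailable, which is the genuinely new input of Wan beyond the Skinner--Urban ordinary framework. Once the Main Conjecture is available, the descent and Tamagawa bookkeeping are comparatively routine consequences of the signed formalism already invoked in \autoref{sec:2}, combined with the explicit shape of the interpolation formula for $L_p^{(i)}(f)$ at the trivial character.
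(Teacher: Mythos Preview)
The paper does not prove this theorem at all; it simply cites \cite[Corollary 3.35]{WAN} and uses Wan's result as a black-box input. Your proposal is therefore not really comparable to ``the paper's own proof'' (there is none), but it is a fair high-level summary of the strategy behind Wan's result: establish a signed cyclotomic Iwasawa Main Conjecture by pairing an Euler-system upper bound with an Eisenstein-congruence lower bound, and then specialize at the trivial character via a control theorem while tracking the Tamagawa defects.

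One point worth correcting in your sketch: for a single eigenform over the cyclotomic $\mathbb{Z}_p$-extension, the Euler-system input is Kato's Euler system (threaded through the signed Coleman maps in the non-ordinary case), not the Beilinson--Flach classes, which are built for Rankin--Selberg products of two cuspidal forms. Apart from that, your outline captures the shape of the argument, and in particular correctly identifies the Eisenstein-congruence step as the deep input going beyond the ordinary Skinner--Urban framework.
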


\begin{proof}
See \cite[Corollary 3.35]{WAN}.
\end{proof}

We observe that $g$ and $g^K$ satisfy the assumptions of the previous theorem. Indeed:
\begin{itemize}
\item $\overline{\rho_g} \cong \overline{\rho_f}$ is irreducible over $G_{\mathbb{Q}_p}$ by \ref{Irr};
\item $\overline{\rho_{g^K}}$ is irreducible over $G_{\mathbb{Q}_p}$ because $p$ splits in $K$ and $A_g \cong A_{g^K}$ as $G_K$-representations (and so as $G_{\mathbb{Q}_p}$-representations);
\item $\SL_2(\mathbb{Z}_p) \subset \rho_g(G_\mathbb{Q})$ by \ref{BI}, surjectivity of the reduction map $\SL_2(\mathbb{Z}_p) \rightarrow \SL_2(\mathbb{Z}_p/p\mathbb{Z}_p)$ and \cite[Theorem 2.1]{RIB};
\item $\SL_2(\mathbb{Z}_p) \subset \rho_{g^K}(G_\mathbb{Q})$ because by \cite[Lemma 6.1]{CH} we have $-1 \in \overline{\rho_{g^K}}(G_{\mathbb{Q}})$ and it implies $\SL_2(\mathbb{Z}_p/p \mathbb{Z}_p) \subset \overline{\rho_{g^K}}(G_{\mathbb{Q}})$;
\item by \ref{St} and Theorem \ref{lr} there exists $q' \parallel N$ such that $a_{q'}(g)=-(q')^{k/2-1}$;
\item by \ref{St}, Theorem \ref{lr} and the equality $a_n(g^K)=a_n(g) \cdot \epsilon_K(n)$, there exists $q' \parallel N$ such that $a_{q'}(g^K)=-(q')^{k/2-1}$;
\item since $\Sel_{\BK}(\mathbb{Q},A_g)=\Sel_{\BK}(\mathbb{Q},A_{g^K})=0$ we have $L(g,k/2)\neq 0$ and $L(g^K,k/2) \neq 0$ by \cite[Corollary 3.34]{WAN}.
\end{itemize}

Since $p \nmid \ell ND_K$ we have that $V_g$ and $V_{g^K}$ are crystalline and so
$$\Tam_p(\mathbb{Q},A_g)=\Tam_p(\mathbb{Q},A_{g^K})=\mathcal{O}$$
by \cite[$\mathsection 4$]{BB}. Putting the previous results together we find that
$$v_\varpi \left(\frac{L(g,k/2)}{(2 \pi i)^{k/2-1} \Omega_g}\right)=\displaystyle \sum_{q|\ell N^-} t_q(\mathbb{Q},A_g)$$
and
$$v_\varpi \left(\frac{L(g^K,k/2)}{(2 \pi i)^{k/2-1} \Omega_{g^K}}\right)=\displaystyle \sum_{q|\ell N^-} t_q(\mathbb{Q},A_{g^K}).$$
Summing them and observing that isomorphism \ref{splitting2} holds also for $q|\ell N^-$ we get
$$v_\varpi \left( \frac{L(g/K,k/2)}{(2 \pi i)^{k-2}\Omega_g \Omega_{g^K}} \right)=\displaystyle \sum_{v|\ell N^-} t_v(K,A_g).$$
Since $i^k \in \mathcal{O}^\times$ and $2 \in \mathcal{O}^\times$, we use Theorem \ref{pc} to get
$$v_\varpi \left( \frac{L(g/K,k/2) \eta_g(\ell N)}{4^{k-1} \pi^k \langle g,g \rangle_{\Gamma_0(\ell N)}} \right)=\displaystyle \sum_{v|\ell N^-} t_v(K,A_g).$$
In particular, one has
$$v_\varpi \left( \frac{L(g/K,k/2)}{\Omega_g^{\can}} \right)=\displaystyle \sum_{v|\ell N^-} t_v(K,A_g)$$
and, since
$$v_\varpi \left(\frac{\Omega_{g,\ell N^-}}{\Omega_g^{\can}} \right) = \displaystyle \sum_{v|\ell N^-} t_v(K,A_g)$$
by \cite[Proposition 3.2]{W}, we can conclude that
$$v_\varpi \left(\frac{L(g/K,k/2)}{\Omega_{g,\ell N^-}} \right)=0.$$

\subsection{End of the proof}
If we assume $\chi$ to be the trivial character in Theorem \ref{svf} and set $c=1$ we have
$$\theta^2_{1,N^+,\ell N^-}(\phi)=(-1) \cdot \Gamma(k/2)^2 \cdot (-D_K)^{k-1} \cdot \sqrt{D_K}^{-1} \cdot \frac{\left| \mathcal{O}_K^\times\right|^2}{8} \cdot \frac{L(g/K,k/2)}{\Omega_{g,\ell N^-}}.$$
By \ref{FL} we have that $v_\varpi(\Gamma(k/2))=0$ and since $p$ splits in $K$ we have $v_\varpi(D_K)=0$. Since $p \nmid 6$ we also have $v_\varpi(|\mathcal{O}^\times|)=0$ and so $v_\varpi(\theta_{1,N^+,\ell N^-}(\phi))=0$. If we consider Theorem \ref{rec} with $c=1$ we get
$$\displaystyle \sum_{\sigma \in \mathcal{G}_1} \big\langle \loc_\ell \big( \sigma(\iota_{K_1,1}([y_1]_1)) \big) , \phi_1 \big\rangle_{B_2, g} \equiv u \theta_{1,N^+,\ell N^-}(\phi) \mod \varpi$$
with $u$ a unit in $\mathcal{O}$. Since the latter is a unit we have
$$\displaystyle \sum_{\sigma \in \mathcal{G}_1} \big\langle \loc_\ell \big(\sigma(\iota_{K_1,1}([y_1]_1)) \big) , \phi_1 \big\rangle_{B_2, g} \neq 0,$$
so, in particular,
$$\displaystyle \sum_{\sigma \in \mathcal{G}_1} \sigma \big(\iota_{K_1,1}([y_1]_1) \big) \neq 0.$$
Since $\iota_{K_1,1}$ is Galois-equivariant we have
\begin{flalign*}
&& \displaystyle \sum_{\sigma \in \mathcal{G}_1} \sigma \big(\iota_{K_1,1}([y_1]_1) \big)&=\displaystyle \sum_{\sigma \in \mathcal{G}_1} \iota_{K_1,1}\big(\sigma([y_1]_1)\big) =\iota_{K_1,1}\left(\displaystyle \sum_{\sigma \in \mathcal{G}_1} \sigma([y_1]_1)\right)&\\
&& &=\iota_{K_1,1}\left( \left[ \displaystyle \sum_{\sigma \in \mathcal{G}_1} \sigma(y_1)\right]_1 \right)=\iota_{K_1,1}([z_1]_1) \neq 0.
\end{flalign*}
Finally, $\kappa_f(1,1)=\res^{-1} \big( \iota_{K_1,1}([z_1]_1) \big) \neq 0$ and the proof is finished.

\section{Proof of Kolyvagin's conjecture: higher rank case} \label{sec:7}

In this section we are going to show how to remove the assumption on the rank of the residual Selmer group. In particular we are going to prove the following.

\begin{theorem} \label{Kconj}
If Assumption \ref{Ass2} is satisfied then $\kappa_f^{\st} \neq 0$.
\end{theorem}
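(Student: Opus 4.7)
The plan is to bootstrap from Theorem 6.1 (the rank-one case) to the general statement via iterated level raising at admissible primes, following the approach of Zhang~\cite{Z} and Longo--Pati--Vigni~\cite{LPV}. Set $r := \dim_{\mathbb{F}} \Sel_{\mathcal{F}(N^-)}(K, A_{f,1})$. The case $r = 1$ is precisely Theorem 6.1, so I focus on $r \geq 2$ (the case $r = 0$ is analogous to $r = 1$, since in that regime the theta element of Definition \ref{theta} at $c = 1$ is non-zero by the computation closing Section 6, and Theorem \ref{rec} then forces $\kappa_f(1,1) \neq 0$ directly).

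For $r \geq 2$, I first select admissible primes $\ell_1, \ldots, \ell_{r-1}$ for $f$ inductively via Lemma \ref{eadm} (applied to a basis of the residual Selmer group) together with a standard Chebotarev density argument, so that the joint localization
\[
\Sel_{\mathcal{F}(N^-)}(K, A_{f,1}) \longrightarrow \bigoplus_{i=1}^{r-1} H^1_{\sing}(K_{\ell_i}, A_{f,1})
\]
is surjective with one-dimensional kernel. Applying Theorem \ref{lr} iteratively yields a newform $g \in S_k(\Gamma_0(\ell_1 \cdots \ell_{r-1} N))$ with $\overline{\rho_g} \cong \overline{\rho_f}$ and $a_{\ell_i}(g) \equiv \epsilon_{\ell_i} \ell_i^{k/2-1} \pmod \varpi$; Assumption \ref{Ass2} transfers to $g$ with $N^-$ replaced by $\ell_1 \cdots \ell_{r-1} N^-$ (the $\ell_i$ are inert in $K$, hence contribute to the ``inert'' part of the level). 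A control-theorem argument in the style of Lemmas~6.2 and 6.3 then produces
\[
\dim_{\mathbb{F}} \Sel_{\mathcal{F}(\ell_1 \cdots \ell_{r-1} N^-)}(K, A_{g,1}) = 1,
\]
placing $g$ within the scope of Theorem 6.1 and yielding $\kappa_g(1,1) \neq 0$.

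The final step is to transfer the non-vanishing back to $f$. The congruence of residual representations identifies $A_{g,1} \cong A_{f,1}$ as $G_K$-modules; under this identification, the reciprocity law (Theorem \ref{rec}) and the explicit construction of Kolyvagin derivative classes in Section 5 match $\kappa_g(1,1)$ with $\kappa_f(1, \ell_1 \cdots \ell_{r-1})$ up to a unit in $\mathcal{O}^\times$, producing the desired non-trivial element of $\kappa_f^{\st}$.

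\textbf{Main obstacle.} The principal technical difficulty is reconciling the definition of admissible primes (requiring $p \nmid \ell^2-1$) with that of Kolyvagin primes (requiring $p \mid \ell+1$): a priori the two classes are disjoint, so $\ell_1 \cdots \ell_{r-1}$ need not lie in $\Lambda_{\Kol}(f)$. Bridging this requires either enlarging the admissible class of indices in the Kolyvagin-derivative construction of Section 5, or a more delicate simultaneous Chebotarev selection of the $\ell_i$ satisfying both families of congruences modulo $\varpi$. A secondary subtlety is verifying that each iterated level raising drops the residual Selmer dimension by exactly one in the presence of the signed local conditions at $p$; this requires careful use of the Hatley--Lei results~\cite{HL} invoked in Section 6 to propagate through the full tower of level raisings.
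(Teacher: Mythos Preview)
Your proposal has a genuine gap, and you have in fact already identified it yourself: admissible primes and Kolyvagin primes are \emph{disjoint} (the former require $p \nmid \ell^2-1$, the latter $p \mid \ell+1$), so the product $\ell_1\cdots\ell_{r-1}$ never lies in $\Lambda_{\Kol}(f)$ and the class $\kappa_f(1,\ell_1\cdots\ell_{r-1})$ simply does not exist. The asserted matching of $\kappa_g(1,1)$ with $\kappa_f(1,\ell_1\cdots\ell_{r-1})$ therefore has no content, and neither of your suggested fixes (enlarging the index set, or a Chebotarev argument to make the $\ell_i$ simultaneously admissible and Kolyvagin) can work: the congruence conditions are incompatible.

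The paper's argument avoids this obstruction entirely by never trying to turn the level-raising primes into conductors of Kolyvagin classes. The induction is in steps of two (from $r$ to $r+2$), raising the level at two admissible primes $\ell_1,\ell_2$ to reach a form $h$ of Selmer rank $r$ and invoking the inductive hypothesis $\kappa_h^{\st}\neq 0$. The transfer back to $f$ is effected not by identifying classes but by the reciprocity relation \cite[Corollary~2.18]{LPV}, which gives
\[
\loc_{\ell_1}(\kappa_f(1,c)) \neq 0 \iff \loc_{\ell_2}(\kappa_h(1,c)) \neq 0
\]
for \emph{Kolyvagin} conductors $c$. One then needs to know that $\ell_2$ is not a base point for $\kappa_h^{\st}$, and this is exactly where the triangulation of Selmer groups (Proposition~\ref{triangulation}) enters: it constrains $\Sel^{\pm}_{\mathcal{F}(\ell_2\ell_1 N^-),\mathcal{B}_h}(K,A_{h,1})$ sharply enough that $\ell_2 \in \mathcal{B}_h$ leads to a dimension contradiction. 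Your plan has no analogue of this step. You also omit the parity result (Proposition~\ref{parity}) showing $r$ is odd; without it, raising by $r-1$ primes could destroy the hypothesis that $N^-$ has an even number of prime factors, so Theorem~6.1 would not even apply to $g$. (In particular, the case $r=0$ does not occur and is not ``analogous to $r=1$''.)
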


The proof is essentially the same as the one given by Zhang for ordinary modular forms of weight $2$ in \cite{Z} and generalized to higher weight by Longo, Pati and Vigni in \cite{LPV}. In order to ease the reading we recall the main steps of the proof for our settings and our notations. Assumption \ref{Ass2} will be in force for the rest of the section. For any prime $v$ of $K$ we set
$$H^1_{\sing}(K_v,A_{f,1}):=H^1(I_v,A_{f,1})^{G_v/I_v}.$$
For any $* \in \{\emptyset, \f, \sing ,\ord, 2 \}$ and any meaningful rational prime $q$ we are going to denote by
$$H^1_*(K_q,A_{f,1}):=\displaystyle \bigoplus_{v|q} H^1_*(K_v,A_{f,1})$$
and by
$$\loc_q:H^1(K,A_{f,1}) \rightarrow H^1(K_q,A_{f,1})$$
the obvious localization map. First of all we need the following definition.

\begin{definition}
For any square-free integer $n \geq 1$ we call $\nu(n)$ the number of primes dividing $n$.
The \textbf{vanishing order} of $\kappa_f^{\st}$ is
$$\nu_f:= \begin{cases}
\min \{ \nu(c) : c \in \Lambda_{\Kol}(f) ,\kappa_f(1,c) \neq 0\} & \text{if} \: \kappa_f^{\st}\neq 0,\\
\infty & \text{if} \: \kappa_f^{\st}=0.
\end{cases}$$
A rational prime $q \nmid NpD_K$ is a \textbf{base point} for $\kappa_f^{\st}$ if
$$\loc_q(\kappa_f(1,c))=0$$
for any $c \in \Lambda_{\Kol}(f)$.
We denote by $\mathcal{B}_f$ the \textbf{base locus} of $f$, i.e., the set of base points for $\kappa_f^{\st}$. We also set
$$\Sel_{\mathcal{F}(N^-),\mathcal{B}_f}(K,A_{f,1}):=\ker \left(H^1(K,A_{f,1}) \rightarrow \displaystyle \prod_{q \notin \mathcal{B}_f, \: v | q} \frac{H^1(K_v,A_{f,1})}{H^1_{\mathcal{F}(N^-)}(K_v,A_{f,1})} \right).$$
\end{definition}

\subsection{Preliminary lemmas}

\begin{lemma} \label{lli}
Let $x_1,x_2$ be two linearly independent elements in $H^1(K,A_{f,1})$. Then there exist infinitely many $q \in \mathcal{P}_{\Kol}(f)$ such that
$$\loc_q(x_1) \neq 0, \:\:\:\:\:\:\:\: \loc_q(x_2) \neq 0.$$
\end{lemma}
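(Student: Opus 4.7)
The plan is a Chebotarev density argument in the style of Kolyvagin (see e.g.\ \cite{G}). First I would set $L := K(A_{f,1})$, the fixed field of the kernel of the Galois action on $A_{f,1}$. The hypothesis \ref{BI}, together with \ref{FL} (which ensures $p \geq 5$ so that the order of $\SL_2(\mathbb{F}_p)$ is prime to $p$), implies that $A_{f,1}$ is an absolutely irreducible $\Gal(L/K)$-module and, via Sah's lemma, that $H^1(\Gal(L/K), A_{f,1}) = 0$. The inflation--restriction sequence then furnishes an injection
$$H^1(K, A_{f,1}) \hookrightarrow \Hom_{\Gal(L/K)}(G_L, A_{f,1}),$$
under which $x_1, x_2$ correspond to nonzero $\Gal(L/K)$-equivariant homomorphisms $\phi_1, \phi_2 \colon G_L \to A_{f,1}$. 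Let $L'$ denote the fixed field of $\ker\phi_1 \cap \ker\phi_2$, so that $(\phi_1, \phi_2)$ identifies $\Gal(L'/L)$ with a $\Gal(L/K)$-stable submodule of $A_{f,1} \oplus A_{f,1}$.

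The key step is showing this submodule equals the whole of $A_{f,1} \oplus A_{f,1}$. Both projections of the image are surjective, because $\phi_1, \phi_2$ are nonzero and $A_{f,1}$ is irreducible. A proper submodule with surjective projections would be the graph of some $\Gal(L/K)$-equivariant endomorphism $\psi$ of $A_{f,1}$; by Schur's lemma $\psi$ is a scalar $c \in \mathbb{F}$, forcing $\phi_2 = c\,\phi_1$ and hence $x_2 = c\,x_1$, contradicting linear independence.

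Next I would translate the two required conditions into Frobenius data on $\Gal(L'/\mathbb{Q})$. A rational prime $q$ unramified in $L'/\mathbb{Q}$ is Kolyvagin precisely when $\Frob_q \in \Gal(L/\mathbb{Q})$ lies in the conjugacy class of complex conjugation $\tau$: this simultaneously encodes $q$ being inert in $K$ (since $\tau|_K$ is nontrivial) and the congruences $\varpi \mid q+1$ and $\varpi \mid a_q(f)$ through the characteristic polynomial of $\tau$ on $A_{f,1}$. For such $q$ and $v$ the unique prime of $K$ above $q$, the localization $\loc_q(x_i)$ is computed by evaluating $\phi_i$ on a Frobenius lift at $v$ in $\Gal(L'/L)$, via the canonical identifications of the finite and singular parts of $H^1(K_v, A_{f,1})$ with $A_{f,1}/(\Frob_v - 1)$ and $A_{f,1}^{\Frob_v = 1}$ respectively.

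To conclude, fix any lift $\sigma_0 \in \Gal(L'/\mathbb{Q})$ of $\tau$ and translate it in the coset $\sigma_0 \cdot \Gal(L'/L)$ by an element $\delta \in \Gal(L'/L) \cong A_{f,1} \oplus A_{f,1}$ chosen to avoid the two coordinate $\mathbb{F}$-subspaces $\{0\}\oplus A_{f,1}$ and $A_{f,1}\oplus\{0\}$; such $\delta$ exists by the previous step, since the union of those subspaces is a proper subset of $A_{f,1}\oplus A_{f,1}$. Chebotarev density applied to $L'/\mathbb{Q}$ then produces infinitely many rational primes $q$ whose Frobenius is conjugate to $\delta\sigma_0$; each such $q$ is an admissible Kolyvagin prime with $\loc_q(x_1) \neq 0$ and $\loc_q(x_2) \neq 0$. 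The only real technicality is the bookkeeping tying $\loc_q$ to the Frobenius evaluation of $\phi_i$, which is standard for unramified classes at an inert prime.
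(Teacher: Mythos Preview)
Your outline is the standard Chebotarev argument and is essentially what the cited reference \cite[Lemma 3.7]{LPV} contains; the paper itself gives no independent proof. The structure is correct: restriction to $G_L$ via inflation--restriction, identification of $\Gal(L'/L)$ with $A_{f,1}\oplus A_{f,1}$ using linear independence and irreducibility, and Chebotarev to produce primes with prescribed Frobenius.

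There is one imprecision worth flagging in the final step. You want $\loc_q(x_i)\neq 0$, which for an unramified class at the inert prime $v\mid q$ amounts to $\phi_i(\Frob_v)\neq 0$. Since $q$ is inert in $K$, one has $\Frob_v=\Frob_q^2$; taking $\sigma_0$ to be an honest complex conjugation (so $\sigma_0^2=1$) and writing $\Frob_q=\delta\sigma_0$, one computes $\Frob_q^2=\delta\cdot(\sigma_0\delta\sigma_0^{-1})$, which under the identification $\Gal(L'/L)\cong A_{f,1}^2$ corresponds to $(1+\tau)\delta$. Thus the relevant condition on $\delta$ is not merely ``avoid the two coordinate subspaces'' but rather that \emph{both components of $\delta$ have nonzero projection to the $(+1)$-eigenspace of $\tau$ on $A_{f,1}$}. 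Since $\tau$ has eigenvalues $+1$ and $-1$ on $A_{f,1}$ (as $\det\tau=\chi_p(\tau)=-1$ and $\tau^2=1$), that eigenspace is one-dimensional and the corrected constraint is still an open nonempty condition, so the argument goes through once stated correctly. You rightly anticipated this as the remaining bookkeeping technicality.
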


\begin{proof}
See \cite[Lemma 3.7]{LPV}.
\end{proof}

\begin{lemma}
For any $c \in \Lambda_{\Kol}(f)$ we have $\kappa_f(1,c) \in H^1(K,A_{f,1})^{ \epsilon(f/\mathbb{Q}) \cdot (-1)^{\nu(c)}}$.
\end{lemma}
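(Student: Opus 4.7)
The plan is to carry out the standard Kolyvagin complex conjugation computation, in which the target sign splits as a product of two independent contributions: an Atkin--Lehner sign $\epsilon(f/\mathbb{Q})$ coming from the Heegner cycle itself, and a sign $(-1)^{\nu(c)}$ coming from the $\nu(c)$ Kolyvagin derivative operators $D_q$ appearing in $D_c$. Throughout I would work with the mod-$\varpi$ class $[z_c]_1$, so that all Euler system relations reduce to congruences modulo $\varpi$.

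\emph{Step 1: action of $\tau$ on the Heegner cycle.} Using the moduli description of $P_c = [\theta, \varsigma_{c,1}\tau_1]$ and the fact that the primes dividing $N^+$ split in $K$ while those dividing $N^-$ are inert, one shows that there exists $\sigma_c \in \mathcal{G}_c$ with $\tau(P_c) = \sigma_c \cdot w_N(P_c)$ in $X_{N^+,N^-}(K_c)$, where $w_N$ is the Atkin--Lehner involution. Lifting this equality through the choices of $\tilde{P}_c$ and $\tilde{Z}_c$ to the Kuga--Sato variety $\mathcal{W}_{N^+,N^-,d,k}$, applying $\AJ_{k,d,c}$, and using that $w_N$ acts on the $f$-isotypic part of the cohomology by $\epsilon(f/\mathbb{Q})$, yields
\begin{equation*}
\tau(y_c) = \epsilon(f/\mathbb{Q}) \cdot \sigma_c(y_c) \quad \text{in } H^1(K_c, T_f).
\end{equation*}

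\emph{Step 2: action of $\tau$ on $D_c$.} For each Kolyvagin prime $q \mid c$, $q$ is inert in $K$, so $\tau \sigma_q \tau^{-1} = \sigma_q^{-1}$. A direct computation in $\mathbb{Z}[G_q]$, reindexing $j = q+1-i$ in $\tau D_q \tau^{-1} = \sum_{i=1}^q i \sigma_q^{-i}$, gives the identity
\begin{equation*}
\tau D_q \tau^{-1} + D_q \;=\; (q+1)\bigl(N_q - 1\bigr), \qquad N_q := \sum_{i=0}^{q}\sigma_q^i.
\end{equation*}
Now $(q+1) \equiv 0 \bmod \varpi$ or $a_q(f) \equiv 0 \bmod \varpi$ since $M(q) \geq 1$; in either case the Euler system norm relation $N_q \cdot y_c \equiv (\text{Hecke term}) \cdot y_{c/q}$ forces the right-hand side to vanish modulo $\varpi$ when applied to $y_c$ in $H^1(K_c, A_{f,1})$. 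Iterating over the prime divisors of $c$ gives $\tau D_c \tau^{-1} \equiv (-1)^{\nu(c)} D_c \bmod \varpi$ on the class $[y_c]_1$.

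\emph{Step 3: combining.} Writing $z_c = \sum_{\sigma \in \mathcal{G}_c} \sigma(D_c y_c)$ and combining the two steps,
\begin{equation*}
\tau(z_c) \;=\; \sum_{\sigma \in \mathcal{G}_c}(\tau \sigma \tau^{-1})\cdot(\tau D_c \tau^{-1})\cdot \tau(y_c)\;\equiv\; \epsilon(f/\mathbb{Q})\cdot (-1)^{\nu(c)}\sum_{\sigma' \in \mathcal{G}_c} \sigma'(D_c y_c) \;=\; \epsilon(f/\mathbb{Q}) (-1)^{\nu(c)} z_c
\end{equation*}
modulo $\varpi$, after absorbing the element $\sigma_c$ from Step 1 into a reindexing of the sum over $\mathcal{G}_c$. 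Since $\iota_{K_c,1}$ and $\res_{K_c/K}$ are Galois-equivariant, the same identity descends to $\kappa_f(1,c) \in H^1(K,A_{f,1})$, proving it lies in the $\epsilon(f/\mathbb{Q})\cdot(-1)^{\nu(c)}$-eigenspace for $\tau$. The main technical point will be Step 1: the Atkin--Lehner/complex conjugation identity at the level of the geometric cycle $Y_c$ on the Kuga--Sato variety, which requires tracking the interaction of $w_N$ with the projectors $\epsilon_d$ and $\epsilon_k$; this is the content, in the relevant cases, of the cycle constructions of Nekov\'a\v{r} and Besser, while Step 2 is purely formal.
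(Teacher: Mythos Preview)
The paper does not give its own argument for this lemma; it simply cites \cite[Proposition 3.14]{LV3}. Your sketch is exactly the standard Kolyvagin computation underlying that reference, so in substance you are reproducing the proof rather than diverging from it.

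Two small remarks. In Step~2 you write ``$(q+1)\equiv 0\bmod\varpi$ \emph{or} $a_q(f)\equiv 0\bmod\varpi$'' and then appeal to the norm relation to handle the second case. But by definition $M(q)=\min\{v_\varpi(a_q(f)),v_\varpi(q+1)\}>0$, so \emph{both} quantities are divisible by $\varpi$; in particular $(q+1)\equiv 0\bmod\varpi$ always holds and the term $(q+1)(N_q-1)$ dies modulo $\varpi$ without any Euler system input. The norm relation is not needed here. In Step~1 you are right that the delicate point is pinning down the exact sign contributed by the Atkin--Lehner involution on the $f$-isotypic piece of the Kuga--Sato cohomology (in higher weight there is a factor coming from the action on $\epsilon_k$), and this is precisely what is worked out in the cited reference; your outline is correct but you should be explicit that the identification of that eigenvalue with the root number $\epsilon(f/\mathbb{Q})$ uses the functional equation together with the normalisation of the self-dual twist.
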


\begin{proof}
See \cite[Proposition 3.14]{LV3}.
\end{proof}

\begin{lemma} \label{sfi}
Let $q \in \mathcal{P}_{\Kol}(f)$ and $c \in \Lambda_{\Kol}(f)$ such that $q \nmid c$. Then
$$\loc_q(\kappa_f(1,cq))=0 \Leftrightarrow \loc_q(\kappa_f(1,c))=0.$$
\end{lemma}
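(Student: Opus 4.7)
The plan is to combine the finite/singular decomposition of local cohomology at the Kolyvagin prime $q$ with the two reciprocity laws that relate the Heegner-derived classes of conductors $c$ and $cq$. Since $q\nmid Np$ is inert in $K$ and the representation $A_{f,1}$ is unramified at $q$, writing $v$ for the unique prime of $K$ above $q$ I would first record the standard decomposition
$$H^1(K_q,A_{f,1}) \;=\; H^1_{\f}(K_q,A_{f,1})\oplus H^1_{\sing}(K_q,A_{f,1}),$$
where $H^1_{\f}=H^1_{\unr}$ because $A_{f,1}$ is unramified at $q$. Since $q\nmid c$, the cycle $y_c$ and its Galois translates are defined over fields unramified above $q$, so $\loc_q(\kappa_f(1,c))$ lies entirely in the finite summand. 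The lemma therefore reduces to showing that each of the two components of $\loc_q(\kappa_f(1,cq))$ in this decomposition vanishes if and only if $\loc_q(\kappa_f(1,c))$ does.

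This is exactly the content of the two \emph{reciprocity laws} at Kolyvagin primes. The starting point is the derivative identity $(\sigma_q-1)D_q=(q+1)-N_q$ in $\mathbb{Z}[G_q]$ combined with the Euler system trace relation
$$\Tr_{K_{cq}/K_c}(y_{cq}) \;=\; a_q(f)\cdot y_c$$
for Heegner cycles on the Kuga--Sato variety. Since $q$ is a Kolyvagin prime we have $\varpi\mid(q+1)$ and $\varpi\mid a_q(f)$, and reducing these identities modulo $\varpi$ after applying $D_{cq}=D_cD_q$ yields (i) a \emph{first reciprocity law} identifying $\loc_q^{\f}(\kappa_f(1,cq))$ with a unit multiple of $\loc_q(\kappa_f(1,c))$, and (ii) a \emph{second reciprocity law} providing a Frobenius isomorphism $\Phi_q\colon H^1_{\f}(K_q,A_{f,1})\xrightarrow{\sim}H^1_{\sing}(K_q,A_{f,1})$ under which $\loc_q^{\sing}(\kappa_f(1,cq))$ corresponds to a unit multiple of $\loc_q(\kappa_f(1,c))$. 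Both laws are standard in the Euler system literature; see \cite[$\mathsection$ 3]{LPV} and \cite{LV3} for the derivative computation and the explicit descriptions of $H^1_{\f}$ and $H^1_{\sing}$ at $q$, and \cite{G} for the original weight two picture. Granted them, $\loc_q(\kappa_f(1,cq))=0$ if and only if both of its components vanish, if and only if $\loc_q(\kappa_f(1,c))=0$.

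The main point requiring attention is that the reciprocity laws appear in the literature either in weight two (\cite{G}) or in the higher-weight \emph{ordinary} case (\cite{LV3}, \cite{LPV}), while here $f$ is non-ordinary at $p$. This causes no difficulty because the computations are purely local at $q$: they use only smooth and proper base change for the Kuga--Sato variety at $q$, the Eichler--Shimura congruence relation at $q$, and the $\varpi$-divisibility bounds on $H^1(K_q,A_{f,1})$ coming from $q$ being a Kolyvagin prime. None of these inputs involves the local behaviour of $\rho_f$ at $p$, so the proofs of \cite{LPV} go through verbatim in our setting; I would simply quote them at the appropriate points rather than reprove them.
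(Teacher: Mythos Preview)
Your approach is the same as the paper's---both reduce to \cite[Lemma 3.10]{LPV}---and your remark that the argument is purely local at $q$ and hence insensitive to the (non-)ordinariness of $f$ at $p$ is exactly the point. However, your ``first reciprocity law'' is misstated. By Lemma~\ref{lkc} (equivalently \cite[Proposition 3.17]{LV3}), since $q\mid cq$ one has $\loc_q(\kappa_f(1,cq))\in H^1_{\sing}(K_q,A_{f,1})$, so its finite component is \emph{identically zero}; it is not a unit multiple of $\loc_q(\kappa_f(1,c))$. If your (i) held as written, it would force $\loc_q(\kappa_f(1,c))=0$ for every $c$, which is absurd. The correct structure is: the finite component of $\loc_q(\kappa_f(1,cq))$ vanishes automatically, and the single finite--singular comparison (your (ii), the isomorphism $\Phi_q$ carrying $\loc_q^{\sing}(\kappa_f(1,cq))$ to a unit times $\loc_q(\kappa_f(1,c))$) already gives the full equivalence. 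Drop the two-law decomposition and invoke only the finite--singular comparison from \cite[\S3]{LPV}; the rest of your write-up is fine.
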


\begin{proof}
See \cite[Lemma 3.10]{LPV}.
\end{proof}

\begin{lemma} \label{lkc}
For any rational prime $q$ and any $c \in \Lambda_{\Kol}(f)$ we have
$$\loc_q(\kappa_f(1,c)) \in \begin{cases}
H^1_{\f}(K_q,A_{f,1}) & \text{if} \: q \nmid cpN^-, \\
H^1_{\sing}(K_q,A_{f,1}) & \text{if} \: q | c ,\\
H^1_{\ord}(K_q,A_{f,1}) & \text{if} \: q | N^-,\\
H^1_2(K_q,A_{f,1}) & \text{if} \: q=p.
\end{cases}$$
\end{lemma}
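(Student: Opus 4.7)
The plan is to analyze $\loc_q(\kappa_f(1,c))$ case by case, exploiting the fact that, by construction, $\kappa_f(1,c)$ is the restriction to $K$ of the class $\iota_{K_c,1}([z_c]_1)$, where $z_c = \sum_{\sigma \in \mathcal{G}_c} \sigma(D_c(y_c))$ and $y_c = \AJ_{k,d,c}(Y_c)$ is the image under the $p$-adic Abel--Jacobi map of a geometric Heegner cycle on the Kuga--Sato variety $\mathcal{W}_{N^+,N^-,d,k}$. The local behaviour of $\loc_q$ is therefore dictated by the local geometry of this variety and by the local structure of $\rho_f$ at $q$, which is governed by Carayol's description \eqref{car}. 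Throughout, one uses the compatibility of $\iota_{K_c,1}$ with $\res_{K_c/K}$, so it suffices to verify the local conditions one prime of $K_c$ at a time and then descend.

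The two ``geometric'' cases are $q \nmid cpN^-$ and $q \mid N^-$. For $q \nmid cpN$, the Kuga--Sato variety has smooth proper model over $\mathbb{Z}[1/Ndp]$ and the Heegner points are defined over $K_c$ which is unramified at $q$, so the étale Abel--Jacobi image lands in $H^1_{\unr} = H^1_{\f}$ at any $w\mid q$. For $q \mid N^+$ (still $q \nmid cp$), ramification at $q$ is controlled by the ordinary filtration on $V_f$ coming from \eqref{car}, and the Abel--Jacobi image of a cycle supported on the good-reduction locus of the Shimura curve respects that filtration, landing once again in $H^1_{\f}$ (using also $H^1_{\f}=H^1_{\unr}$ when $V_f$ is unramified). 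The case $q \mid N^-$ is analogous but uses that the Shimura curve is attached to $B_1$ of discriminant $N^-$: the Carayol filtration $F_v^+V_f$ is exactly the one preserved by the cycle class, hence $\loc_q$ lies in $H^1_{\ord}$ by the definition of that condition as the kernel of projection to $M/F_v^+M$.

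The case $q \mid c$ is the classical Kolyvagin ``finite-to-singular'' mechanism, adapted to cycles as in \cite[\S3]{LV3}. Writing $c = q c'$, one exploits the norm relation $\Tr_{K_{qc'}/K_{c'}}(y_{qc'}) = a_q(f)\cdot y_{c'}$ together with the Euler-system-type identity satisfied by $D_q = \sum_{i=1}^q i\sigma_q^i$, namely $(\sigma_q-1)D_q = q+1 - \Nm_q$. Combined with the defining congruences of a Kolyvagin prime ($\varpi \mid q+1$ and $\varpi \mid a_q(f)$), this produces, modulo $\varpi$, a class whose localization at $q$ has nontrivial image in $H^1_{\sing}(K_q,A_{f,1}) = H^1(I_q, A_{f,1})^{\Frob}$; more precisely, the reduction argument shows that $\loc_q([z_c]_1)$ lies in (and in fact maps to the singular quotient via) $H^1_{\sing}$.

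The most delicate case is $q=p$, where the claim is that $\loc_p(\kappa_f(1,c)) \in H^1_2(K_p,A_{f,1})$. Since $p \nmid Ndc$, the Kuga--Sato variety has good reduction at $p$, so by the standard comparison theorem the $p$-adic Abel--Jacobi image is a \emph{crystalline} class, i.e.\ lies in $H^1_{\f}(K_v, T_f) = H^1_{\BK}(K_v,T_f)$ for each $v \mid p$. Reducing modulo $\varpi$ and using the isomorphism $H^1(K_v,A_f)[\varpi] \cong H^1(K_v,A_{f,1})$ from \cite[Lemma 4.1]{HL} (valid thanks to Assumption \ref{FL}), one lands in $H^1_{\BK}(K_v,A_{f,1})$. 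The final step is to invoke \cite[Proposition 2.14]{HL}, already used in Section \ref{sec:6}, which identifies $H^1_{\BK}(K_v,-)$ with the signed condition $H^1_2(K_v,-)$ in the non-ordinary setting. The main obstacle I anticipate is this last identification: ensuring that the integral/mod-$\varpi$ form of the crystallinity is correctly matched with the signed condition $H^1_2$ as defined via the Coleman map $\Col_{f,2}$, since $H^1_{\f}$ and $H^1_2$ coincide only after a careful unravelling through Iwasawa cohomology and Wach modules. Once this compatibility is in place, all four cases combine into the statement.
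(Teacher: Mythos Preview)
Your case-by-case plan matches the paper's approach exactly: the paper simply cites \cite[Theorem 3.1]{LMZ} and \cite[Proposition 3.17]{LV3} for $q\nmid N^-$, \cite[p.~2332]{W} for $q\mid N^-$, and for $q=p$ it does precisely what you propose, namely use crystallinity of the Abel--Jacobi image to land in $H^1_{\f}(K_p,A_{f,1})$ and then invoke \cite[Proposition 2.14]{HL} (together with an argument as in Lemma \ref{ct}) to identify $H^1_{\f}(K_p,A_{f,1})=H^1_2(K_p,A_{f,1})$.

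One imprecision worth flagging: in the case $q\mid c$, the identity $(\sigma_q-1)D_q=(q+1)-\Nm_q$ together with $\varpi\mid q+1$ and $\varpi\mid a_q(f)$ is the ingredient that makes $[D_c(y_c)]_1$ \emph{Galois-invariant}, hence allows the class to descend to $K$; it is not by itself the argument that the localization is singular. The singularity at $q\mid c$ comes from a separate local computation exploiting that $q$ is totally (tamely) ramified in $K_c/K_{c/q}$, so that the derived class, viewed over the unramified field $K_{c/q,w}$, contributes through inertia. Also, the phrase ``has nontrivial image in $H^1_{\sing}$'' is not what you want (the lemma asserts no nontriviality); the correct claim, as you state at the end, is that the finite part vanishes, i.e.\ the class lies in the $H^1_{\sing}$-summand under the splitting available at Kolyvagin primes. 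With that adjustment your plan is correct.
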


\begin{proof}
For primes $q \nmid N^-$ see the proof of \cite[Theorem 3.1]{LMZ} and \cite[Proposition 3.17]{LV3} and for $q=p$ notice that we can use an argument similar to the one in the proof of Lemma \ref{ct} to see that $H^1_{\f}(K_p,A_{f,1})=H^1_2(K_p,A_{f,1})$ by \cite[Proposition 2.14]{HL}. For primes $q | N^-$ see the arguments in \cite[page 2332]{W}.
\end{proof}

\begin{lemma} \label{ltp}
The local Tate pairing induces perfect pairings among  $1$-dimensional $\mathbb{F}$-vector spaces
$$\langle \cdot, \cdot \rangle_q:H^1_{\f}(K_q,A_{f,1})^{\pm} \times H^1_{\sing}(K_q,A_{f,1})^{\pm} \rightarrow \mathbb{F}$$
for any $q \in \mathcal{P}_{\Kol}(f)$.
Furthermore, $H^1_{\f}(K_q,A_{f,1})$ is isotropic for any $q \nmid N^-p$, $H^1_{\sing}(K_q,A_{f,1})$ is isotropic for any $q \nmid Np$, $H^1_{\ord}(K_q,A_{f,1})$ is isotropic for any $q | N^-$ and $H^1_{2}(K_p,A_{f,1})$ is isotropic.
We also have
$$\displaystyle \sum_{q} \langle x,y \rangle_q =0$$
for any $x,y \in H^1(K,A_{f,1})$, where the sum runs over rational primes.
\end{lemma}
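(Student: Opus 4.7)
The plan is to handle the three parts of Lemma \ref{ltp} in sequence: (i) the perfect pairing of $1$-dimensional $\pm$-components at Kolyvagin primes, (ii) the isotropy of each of the local conditions $H^1_{\f}$, $H^1_{\sing}$, $H^1_{\ord}$, $H^1_2$, and (iii) the global reciprocity formula. Each assertion rests on standard local and global Galois duality, adapted to the non-ordinary setting via the signed Selmer structures of Lei--Loeffler--Zerbes.

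For (i), let $q \in \mathcal{P}_{\Kol}(f)$ and let $v$ be the unique prime of $K$ above $q$; since $q$ is inert, $K_v/\mathbb{Q}_q$ is the unramified quadratic extension and $\Frob_v = \Frob_q^2$. The Eichler--Shimura relation on the self-dual twist $T_f$, together with the Kolyvagin conditions $\varpi \mid a_q(f)$ and $\varpi \mid q+1$, forces $\Frob_q^2 \equiv 1 \pmod{\varpi}$ on $A_{f,1}$, so $\Frob_v$ acts trivially. Therefore $A_{f,1}$ is unramified at $v$, $H^1_{\f}(K_v, A_{f,1}) = H^1_{\unr}(K_v, A_{f,1}) \cong A_{f,1}$ is $2$-dimensional over $\mathbb{F}$, and the Kummer identification $H^1_{\sing}(K_v, A_{f,1}) \cong A_{f,1}(-1)^{\Frob_v = 1}$, combined with $q^2 \equiv 1 \pmod{\varpi}$, gives that $H^1_{\sing}$ is also isomorphic to $A_{f,1}$ and hence $2$-dimensional. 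Complex conjugation $\tau$ acts non-trivially on $K_v$ with $\det \overline{\rho_f}(\tau) = -1$ (since $k$ is even), so its action on $A_{f,1}$ has trace $0$ and each of the $\pm$-eigenspaces on both $H^1_{\f}$ and $H^1_{\sing}$ is $1$-dimensional. Local Tate duality produces a perfect pairing $H^1(K_v, A_{f,1}) \times H^1(K_v, A_{f,1}) \to \mathbb{F}$ under which $H^1_{\f}$ and $H^1_{\sing}$ are mutual annihilators; the $\tau$-equivariance of this pairing (up to the sign coming from the action on $\mu_p$) then forces non-degeneracy on each of the four $\pm \times \pm$ restrictions, yielding perfect pairings of $1$-dimensional $\mathbb{F}$-spaces.

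For (ii), at any prime $q \nmid Np$ the module $A_{f,1}$ is unramified at every $v \mid q$, and $H^1_{\unr}(K_v, A_{f,1})$ satisfies $|H^1_{\unr}|^2 = |H^1|$, hence equals its own annihilator under Tate duality by self-duality of $A_{f,1}$ in residue characteristic $\neq p$; this yields isotropy of $H^1_{\f}$ for $q \nmid N^- p$ and, using a section of $H^1 \twoheadrightarrow H^1_{\sing}$ afforded by inflation, also of $H^1_{\sing}$ for $q \nmid Np$. At $q \mid N^-$, the line $F_v^+ V_f \subset V_f$ provided by \eqref{car} is $G_{\mathbb{Q}_q}$-stable and isotropic for the self-duality pairing on $V_f$ (its Cartier dual is the quotient $V_f/F_v^+ V_f$, twisted by the unramified quadratic character $\tau_q$), so the induced local condition $H^1_{\ord}(K_v, A_{f,1})$ is its own annihilator. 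At $v \mid p$, isotropy of $H^1_2(K_v, A_{f,1})$ follows from the identification with the Bloch--Kato local condition $H^1_{\f}^{\mathrm{BK}}$ at $p$ in the non-ordinary setting established in \cite[Proposition 2.14]{HL}, combined with the classical fact that Bloch--Kato's crystalline finite part is its own annihilator under local duality.

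For (iii), the reciprocity $\sum_q \langle x, y \rangle_q = 0$ for $x, y \in H^1(K, A_{f,1})$ is Poitou--Tate duality for the self-dual $G_K$-module $A_{f,1}$: cup product followed by the self-duality isomorphism and the sum of local invariants $\bigoplus_v H^2(K_v, \mu_p) \to \mathbb{F}$ vanishes on classes in the image of $H^2(K, \mu_p)$, which is Hasse's reciprocity for the Brauer group of $K$. The main obstacle in this plan is the isotropy of the signed local condition $H^1_2$ at $p$: since it is defined via the kernel of a Coleman map rather than as an ``unramified'' or ``ordinary'' subspace, its self-annihilation under Tate duality is not immediate from the definition and requires the non-trivial bridge to Bloch--Kato in the non-ordinary crystalline setting furnished by the Lei--Loeffler--Zerbes and Hatley--Lei machinery.
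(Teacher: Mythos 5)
Your proposal is correct and follows essentially the same route as the paper, whose proof simply cites \cite{EV}, \cite{BK}, \cite{LPV}, \cite{HL} and \cite{Bes2} for the respective assertions; in particular you correctly isolate the only genuinely non-ordinary input, namely the identification $H^1_2(K_p,A_{f,1})=H^1_{\f}(K_p,A_{f,1})$ from \cite[Proposition 2.14]{HL}, after which isotropy reduces to the classical self-annihilation of the Bloch--Kato finite part. One small slip in part (i): the $\tau$-equivariance of the local Tate pairing shows that the two cross-sign restrictions $H^1_{\f}(K_q,A_{f,1})^{\pm}\times H^1_{\sing}(K_q,A_{f,1})^{\mp}$ \emph{vanish} (they are not non-degenerate), and it is precisely this vanishing of the off-diagonal blocks that forces the two same-sign blocks to be perfect pairings of $1$-dimensional spaces.
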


\begin{proof}
The first assertion follows from \cite[$\mathsection 5.3$]{EV}. The second assertion away from $p$ follows from \cite[Proposition 3.8]{BK} and \cite[$\mathsection 2.6.4$]{LPV}. The isotropy of $H^1_2(K_p,A_{f,1})$ follows from the equality $H^1_2(K_p,A_{f,1})=H^1_{\f}(K_p,A_{f,1})$ and \cite[Proposition 3.8]{BK}.
For the last assertion see \cite[Proposition 2.2]{Bes2}.
\end{proof}

\begin{lemma} \label{kl}
The equality
$$\Sel^{\pm}_{\mathcal{F}(\ell N^-)}(K,A_{g,1})=\ker \left( \loc_\ell: \Sel^{\pm}_{\mathcal{F}(N^-)}(K,A_{g,1}) \rightarrow H^1_{\f}(K_\ell,A_{g,1}) \right)$$
holds for any admissible prime $\ell$ and any newform $g \in S_k(\Gamma_0(\ell N))$ as in Theorem \ref{lr}. Furthermore, $\dim_\mathbb{F}H^1_{\f}(K_\ell,A_{g,1})=1$.
\end{lemma}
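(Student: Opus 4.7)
The plan is to establish the lemma by first unwinding the local structure of $A_{g,1}$ at $\ell$ and then leveraging it for both assertions. By Theorem \ref{lr}, $\overline{\rho_g} \cong \overline{\rho_f}$ as $G_\mathbb{Q}$-modules, so $A_{g,1} \cong A_{f,1}$ and, since $f$ has level $N$ with $\ell \nmid N$, the module $A_{g,1}$ is unramified at $\ell$. Denote by $v$ the unique prime of $K$ above the inert prime $\ell$. Applying \eqref{car} to $g$ at $\ell \parallel \ell N^-$ and reducing modulo $\varpi$ produces a $G_{K_v}$-stable line $F^+_v A_{g,1} \subset A_{g,1}$ on which $\Frob_v = \Frob_\ell^2$ acts by $\ell^2$, while on the quotient $A_{g,1}/F^+_v A_{g,1}$ the Frobenius acts trivially (using $\tau_\ell^2 = 1$). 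Admissibility supplies $p \nmid \ell^2-1$, so the two eigenvalues are distinct modulo $\varpi$ and $A_{g,1}|_{G_{K_v}}$ splits as $F^+_v A_{g,1} \oplus (A_{g,1}/F^+_v A_{g,1})$.

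From this splitting the dimension claim is immediate: $A_{g,1}$ unramified at $\ell$ gives $H^1_f(K_\ell, A_{g,1}) = H^1_{\unr}(K_v, A_{g,1}) \cong A_{g,1}/(\Frob_v - 1)A_{g,1}$, and the eigenvalue computation (one eigenvalue being $1$, the other $\ell^2 \not\equiv 1 \bmod \varpi$) yields $\dim_\mathbb{F} H^1_f(K_\ell, A_{g,1}) = 1$. For the Selmer identity, the structures $\mathcal{F}(N^-)$ and $\mathcal{F}(\ell N^-)$ coincide at every prime other than $\ell$, so the two Selmer groups can only differ through their local conditions at $\ell$. Using the splitting of $A_{g,1}|_{G_{K_v}}$, one identifies $H^1_{\ord}(K_\ell, A_{g,1})$ with the image of $H^1(K_v, F^+_v A_{g,1})$, a one-dimensional subspace sitting in the ``singular'' summand of the decomposition, while $H^1_f(K_\ell, A_{g,1})$ sits in the complementary ``unramified'' summand. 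A class in $\Sel^{\pm}_{\mathcal{F}(\ell N^-)}(K, A_{g,1})$ has localization at $\ell$ in $H^1_{\ord}$; requiring simultaneously the $\mathcal{F}(N^-)$-condition (i.e.\ localization in $H^1_f$) forces the localization into $H^1_f \cap H^1_{\ord} = 0$. Conversely, a class in $\Sel^{\pm}_{\mathcal{F}(N^-)}$ with vanishing localization at $\ell$ trivially satisfies the ordinary condition there. Everything commutes with $\Gal(K/\mathbb{Q})$, so the identity passes to the $\pm$-eigenspaces.

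The delicate step is the verification that the ordinary and finite local conditions at the admissible prime $\ell$ fit into the announced direct-sum decomposition, with the consequence that membership in both $\Sel^{\pm}_{\mathcal{F}(\ell N^-)}$ and $\Sel^{\pm}_{\mathcal{F}(N^-)}$ forces $\loc_\ell = 0$. This hinges precisely on the admissibility condition $p \nmid \ell^2 - 1$, which both makes the Frobenius action semisimple with distinct residual eigenvalues (splitting the local module) and prevents coincidences between the two local subspaces. Once this splitting and the resulting complementarity are in hand, both assertions of the lemma follow by straightforward book-keeping.
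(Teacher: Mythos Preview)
Your local computations are correct and relevant---the splitting of $A_{g,1}|_{G_{K_v}}$ via the distinct Frobenius eigenvalues $1$ and $\ell^2$ (using $p\nmid\ell^2-1$) does give $\dim_\mathbb{F}H^1_{\f}(K_\ell,A_{g,1})=1$ and $H^1_{\f}(K_\ell,A_{g,1})\cap H^1_{\ord}(K_\ell,A_{g,1})=0$---but this is not enough to prove the Selmer identity. Your argument establishes only the easy inclusion $\ker(\loc_\ell)\subseteq\Sel^{\pm}_{\mathcal{F}(\ell N^-)}$ and the statement $\Sel^{\pm}_{\mathcal{F}(\ell N^-)}\cap\Sel^{\pm}_{\mathcal{F}(N^-)}=\ker(\loc_\ell)$. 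The sentence ``requiring simultaneously the $\mathcal{F}(N^-)$-condition forces the localization into $H^1_{\f}\cap H^1_{\ord}=0$'' is circular: for a class $x\in\Sel^{\pm}_{\mathcal{F}(\ell N^-)}$ you only know $\loc_\ell(x)\in H^1_{\ord}$, and there is no \emph{local} reason this should also lie in $H^1_{\f}$. Nothing you have written rules out the existence of a global class satisfying the $\mathcal{F}(N^-)$-conditions away from $\ell$ with $\loc_\ell$ a nonzero element of $H^1_{\ord}$; such a class would lie in $\Sel_{\mathcal{F}(\ell N^-)}$ but not in $\Sel_{\mathcal{F}(N^-)}$.

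What is missing is exactly the global Poitou--Tate input that the paper invokes via \cite[Proposition~2.2.9]{H}. Because every local condition in $\mathcal{F}(N^-)$ (and in $\mathcal{F}(\ell N^-)$) is its own annihilator under local Tate duality, global duality forces $\loc_\ell\bigl(\Sel_{\mathcal{F}^\ell(N^-)}(K,A_{g,1})\bigr)$ to be a \emph{one}-dimensional (maximal isotropic) subspace of the two-dimensional $H^1(K_\ell,A_{g,1})$, and this constrains the images of $\Sel_{\mathcal{F}(N^-)}$ and $\Sel_{\mathcal{F}(\ell N^-)}$ under $\loc_\ell$ so that the claimed equality of $\pm$-eigenspaces holds. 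Your local complementarity $H^1_{\f}\cap H^1_{\ord}=0$ is one ingredient of that argument, but it cannot replace the global step. A minor additional point: the identification $H^1_{\f}(K_\ell,A_{g,1})=H^1_{\unr}(K_\ell,A_{g,1})$ is not automatic from ``$A_{g,1}$ unramified at $\ell$'', since $H^1_{\f}$ is \emph{defined} by propagation from $V_g$, which \emph{is} ramified at $\ell$; you should either compute directly with the filtration on $V_g$ or pass through the identification $A_{g,1}\cong A_{f,1}$ and use that $V_f$ is unramified at $\ell$.
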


\begin{proof}
It follows from the proof of \cite[Proposition 2.2.9]{H}.
\end{proof}

\subsection{Proof of Kolyvagin's conjecture}

The proof of Kolyvagin's conjecture in the higher rank case is based on the two following preliminary results, i.e., the triangulation of Selmer groups and the parity conjecture.

\begin{proposition} \label{triangulation}
Assume $\kappa_f^{\st} \neq 0$ and set $\epsilon_{\nu_f}:=\epsilon(f/\mathbb{Q})\cdot (-1)^{\nu_f}$. Then:
\begin{itemize}
\item $\Sel^{\epsilon_{\nu_f}}_{\mathcal{F}(N^-)}(K,A_{f,1})=\Sel^{\epsilon_{\nu_f}}_{\mathcal{F}(N^-),\mathcal{B}_f}(K,A_{f,1})$ is a $(\nu_f +1)$-dimensional $\mathbb{F}$-vector space;
\item  $\dim_\mathbb{F} \Sel^{-\epsilon_{\nu_f}}_{\mathcal{F}(N^-),\mathcal{B}_f}(K,A_{f,1}) \leq \nu_f$.
\end{itemize}
\end{proposition}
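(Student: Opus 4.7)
The plan is to adapt the triangulation argument of Zhang \cite{Z} in the weight-two ordinary case, and its higher-weight extension by Longo--Pati--Vigni \cite{LPV}, replacing their ordinary input at $p$ by the signed Selmer group machinery and the rank-one theorem of Section \ref{sec:6}. I would start by choosing a Kolyvagin integer $c_0 = q_1 \cdots q_{\nu_f} \in \Lambda_{\Kol}(f)$ with $\kappa_f(1,c_0) \neq 0$, which exists by the hypothesis $\kappa_f^{\st} \neq \{0\}$. Lemma \ref{lkc} shows that $\kappa_f(1,c_0)$ is finite at all primes outside $\{q_1,\ldots,q_{\nu_f}\}$ and singular at each $q_i$, lies in the $\epsilon_{\nu_f}$-eigenspace under complex conjugation, and the minimality of $\nu_f$ combined with Lemma \ref{sfi} will be used to ensure that the nontriviality of $\kappa_f(1,c_0)$ propagates to nontriviality of the required localizations.

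To produce $\nu_f + 1$ linearly independent classes in $\Sel^{\epsilon_{\nu_f}}_{\mathcal{F}(N^-), \mathcal{B}_f}(K, A_{f,1})$, I proceed by induction on $\nu_f$. The base case $\nu_f = 0$ amounts to a converse of the rank-one theorem of Section \ref{sec:6}: the nontriviality $\kappa_f(1,1) \neq 0$ forces $\dim \Sel^{\epsilon(f/\mathbb{Q})}_{\mathcal{F}(N^-)}(K, A_{f,1}) \geq 1$, while the Euler-system bound obtained via derivatives at admissible primes forces $\dim \leq 1$. For the inductive step, I use Lemmas \ref{eadm} and \ref{lli} to pick admissible primes $\ell$ at which prescribed residual Selmer classes do not vanish, invoke Theorem \ref{lr} to produce a level-raised newform $g$, and transport classes back to $A_{f,1}$ via the identification $A_{g,1} \cong A_{f,1}$ and the isomorphism of Selmer groups of Lemma \ref{sr}, together with the level-raising relation of Lemma \ref{kl}. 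Running this procedure $\nu_f$ times produces the desired number of independent classes; the sign analysis under complex conjugation then gives exactly $\nu_f+1$ classes of the first sign and at most $\nu_f$ classes of the second sign, matching the two bullets.

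The matching upper bounds and the equality $\Sel^{\epsilon_{\nu_f}}_{\mathcal{F}(N^-)} = \Sel^{\epsilon_{\nu_f}}_{\mathcal{F}(N^-), \mathcal{B}_f}$ come from global Tate duality (Lemma \ref{ltp}). Explicitly, for any $x \in \Sel^{\epsilon_{\nu_f}}_{\mathcal{F}(N^-), \mathcal{B}_f}(K, A_{f,1})$ and any prime $q \in \mathcal{B}_f$, the global reciprocity relation $\sum_{q'}\langle x, \kappa_f(1, cq')\rangle_{q'} = 0$, evaluated for suitably chosen Kolyvagin integers $c$ and admissible primes $q'$, forces $\loc_q(x)$ to lie in the $\mathcal{F}(N^-)$ local subspace at $q$, yielding the equality of the two Selmer groups on this eigencomponent. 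A matrix-rank argument applied to the total localization map into $\bigoplus_i \bigl(H^1_{\f}(K_{q_i},A_{f,1}) \oplus H^1_{\sing}(K_{q_i},A_{f,1})\bigr)$ then pins down the exact dimension $\nu_f+1$.

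The main obstacle is verifying that the Kolyvagin-system bookkeeping, and in particular the reflection between finite and singular subspaces under the local Tate pairing (Lemma \ref{ltp}), goes through uniformly at the non-ordinary prime $p$ with the signed local condition $H^1_2(K_p, \cdot)$. Each application of level raising must be compatible with Lemma \ref{sr}, and the control theorem for signed Selmer groups established in Section \ref{sec:6} must be invoked carefully at every inductive step; once these compatibilities are in place, the sign analysis and the construction of independent Selmer classes parallel the ordinary-case proof of \cite{LPV} essentially verbatim.
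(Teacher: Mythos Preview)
Your proposal conflates two distinct arguments in the paper: the \emph{triangulation} proof of this proposition and the \emph{level-raising induction} used later to prove Kolyvagin's conjecture itself. In the paper, the triangulation proposition is proved entirely with Kolyvagin classes $\kappa_f(1,c)$ for the \emph{fixed} form $f$; Theorem~\ref{lr}, Lemma~\ref{sr}, Lemma~\ref{kl}, and the rank-one theorem of Section~\ref{sec:6} are never invoked. The argument is a direct construction: starting from any $c_1=q_1\cdots q_\nu$ with $\kappa_f(1,c_1)\neq 0$, one successively chooses further Kolyvagin primes $q_{\nu+1},\dots,q_{2\nu+1}$ (using Lemma~\ref{lli} and an auxiliary class in the opposite eigenspace, together with the reciprocity of Lemma~\ref{ltp} and the finite/singular switch of Lemma~\ref{sfi}) so that the $(\nu+1)\times(\nu+1)$ matrix $(\loc_{q_{\nu+j}}\kappa_f(1,c_i))$ with $c_i=q_i\cdots q_{\nu+i-1}$ is upper-triangular with nonzero diagonal. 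This already gives $\nu+1$ independent classes in $\Sel^{\epsilon_\nu}_{\mathcal{F}(N^-),\mathcal{B}_f}$; a global-duality argument (pairing a hypothetical extra class against a suitable $\kappa_f(1,c)$) then shows these classes span, and the same duality yields the bound $\dim\Sel^{-\epsilon_\nu}_{\mathcal{F}(N^-),\mathcal{B}_f}\leq\nu$.

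Your proposed induction on $\nu_f$ via level raising has a genuine gap: level raising replaces $f$ by a form $g$ of level $\ell N$, and while $A_{g,1}\cong A_{f,1}$ as Galois modules, the Kolyvagin classes $\kappa_g(1,c)$ are built from Heegner cycles on a \emph{different} Shimura curve, so there is no way to ``transport classes back'' to obtain elements of the form $\kappa_f(1,c)$. Moreover, there is no a priori relation between $\nu_g$ and $\nu_f-1$, so the inductive hypothesis is not available. Your base case also appeals to a ``converse of the rank-one theorem'' that is not proved in Section~\ref{sec:6}: that section shows (Selmer rank $1$) $\Rightarrow$ $\kappa_f(1,1)\neq 0$, and the reverse implication is in fact a special case of the very proposition you are trying to prove. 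Finally, the non-ordinary issue you flag is not the obstacle here: once Lemmas~\ref{lkc} and~\ref{ltp} establish that $\loc_p(\kappa_f(1,c))\in H^1_2(K_p,A_{f,1})$ and that $H^1_2(K_p,A_{f,1})$ is isotropic, the condition at $p$ contributes nothing to the global pairing sums and the triangulation argument is purely formal in the Kolyvagin-system data.
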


\begin{proof}
We follow the idea of \cite[Proposition 3.14]{LPV}. In order to ease the notation let us set $\nu:=\nu_f$. First of all we prove by induction that there exists a collection $\{ q_i\}_{1 \leq i \leq 2\nu+1}$ of Kolyvagin primes such that, setting $c_i:=\displaystyle \prod_{k=i}^{\nu+i-1}q_k$, the matrix
\begin{equation} \label{mat}
( \loc_{q_{\nu+j}}(\kappa_f(1,c_i)))_{1 \leq i,j \leq \nu+1}
\end{equation}
is upper-triangular with non-zero elements on the diagonal. We notice that the definition of the vanishing order $\nu$ tells us that there exist Kolyvagin primes $q_1,\dots ,q_\nu$ such that $\kappa_f(1,c_1) \neq 0$.

Given $0 \leq j < \nu$ we assume we built a collection $\{q_1, \dots q_{\nu+j} \}$ of Kolyvagin primes which satisfy the given properties and such that $\kappa_f(1,c_{j+1}) \neq 0$. We fix a non-zero element $x \in H^1(K,A_{f,1})^{- \epsilon_\nu}$ such that
\begin{itemize}
\item $\loc_q(x) \in H^1_{\f}(K_q,A_{f,1})$ for any $q \nmid q_{j+1} \dots q_{\nu+j} p N^-$;
\item $\loc_q(x) \in H^1_{\sing}(K_q,A_{f,1})$ for any $q| q_{j+2} \dots q_{\nu+j}$;
\item $\loc_q(x) \in H^1_{\ord}(K_q,A_{f,1})$ for any $q|N^-$;
\item $\loc_p(x) \in H^1_2(K_p,A_{f,1})$.
\end{itemize}
It exists by the argument in the proof of \cite[Proposition 7.8]{Mas} and we notice that $x$ and $\kappa_f(1,c_{j+1})$ are linearly independent. Then by Lemma \ref{lli} we can find $q_{\nu+j+1} \in \mathcal{P}_{\Kol}(f)$ such that
$$\loc_{q_{\nu+j+1}}(\kappa_f(1,c_{j+1})) \neq 0, \:\:\:\:\:\:\:\: \loc_{q_{\nu+j+1}}(x) \neq 0.$$
Now we have
$$0=\langle x,\kappa_f(1,c_{j+1} q_{\nu+j+1}) \rangle=\displaystyle \sum_{q | q_{j+1}q_{\nu+j+1}} \langle x,\kappa_f(1,c_{j+1} q_{\nu+j+1}) \rangle_q$$
by Lemma \ref{lkc} and Lemma \ref{ltp} and, since $\langle x,\kappa_f(1,c_{j+1} q_{\nu+j+1}) \rangle_{q_{\nu+j+1}} \neq 0$ by Lemma \ref{ltp}, we find that $\loc_{q_{j+1}}(\kappa_f(1,c_{j+1} q_{\nu+j+1})) \neq 0$. By Lemma \ref{sfi} it implies that $\loc_{q_{j+1}}(\kappa_f(1,c_{j+2})) \neq 0$, so $\kappa_f(1,c_{j+2})  \neq 0$ and the inductive step is finished. Finally, again by Lemma \ref{lli}, we can find a Kolyvagin prime $q_{2 \nu +1}$ such that $\loc_{q_{2 \nu +1}}(\kappa_f(1,c_{\nu +1})) \neq 0$. Then the classes $\{\kappa_f(1,c_i)\}_{1 \leq i \leq \nu +1}$ are linearly independent and we can observe that they lie in $\Sel^{\epsilon_\nu}_{\mathcal{F}(N^-),\mathcal{B}_f}(K,A_{f,1})$ by Lemma \ref{lkc} and since $\loc_q(\kappa_f(1,c_i/q))=0$ for any $1 \leq i \leq \nu +1$ and any $q | c_i$ by Lemma \ref{sfi} (in particular the matrix \eqref{mat} is upper-triangular). In order to prove the first statement we just need to prove that they generate $\Sel^{\epsilon_\nu}_{\mathcal{F}(N^-),\mathcal{B}_f}(K,A_{f,1})$. To do that we fix $s \in \Sel^{\epsilon_\nu}_{\mathcal{F}(N^-),\mathcal{B}_f}(K,A_{f,1})$ and by eventually subtracting a linear combination of our classes we can assume that $\loc_{q_i}(s)=0$ for any $i=\nu+1,\dots ,2 \nu +1$. We assume by contradiction that $s \neq 0$. By Lemma \ref{sfi} we have that $\loc_{q_{2\nu +1}}(\kappa_f(1,c_{\nu+1}q_{2\nu+1})) \neq 0$ and so, in particular, $\kappa_f(1,c_{\nu+1}q_{2\nu+1}) \neq 0$. It lies in $\Sel^{-\epsilon_\nu}_{\mathcal{F}(N^-)}(K,A_{f,1})$ and so by Lemma \ref{lli} we can find a Kolyvagin prime $q_{2\nu +2}$ such that
$$\loc_{q_{2\nu+2}}(s) \neq 0,\:\:\:\:\:\:\:\: \loc_{q_{2\nu+2}}(\kappa_f(1,c_{\nu+1}q_{2 \nu+1})) \neq 0.$$
Again by Lemma \ref{sfi} we have that $\loc_{q_{2 \nu +2}}(\kappa_f(1,c_{\nu+1} q_{2\nu+1} q_{2\nu+2})) \neq 0$ and so, in particular, $\kappa_f(1,c_{\nu+1} q_{2\nu+1} q_{2\nu+2}) \neq 0$. By Lemma \ref{lkc} and Lemma \ref{ltp} we have that
\begin{flalign*}
&& 0 &=\langle s,\kappa_f(1,c_{\nu+1} q_{2\nu+1} q_{2\nu+2}) \rangle &\\
&& &= \displaystyle \sum_{q \in \mathcal{B}_f} \langle s,\kappa_f(1,c_{\nu+1} q_{2\nu+1} q_{2\nu+2}) \rangle_q+ \displaystyle \sum_{q|c_{\nu+1} q_{2\nu+1} q_{2\nu+2}} \langle s,\kappa_f(1,c_{\nu+1} q_{2\nu+1} q_{2\nu+2}) \rangle_q.
\end{flalign*}
The first sum must be zero by definition of base locus and $\langle s,\kappa_f(1,c_{\nu+1} q_{2\nu+1} q_{2\nu+2}) \rangle_q=0$ for any $q|c_{\nu +1}q_{2 \nu +1}$. So
$$0=\langle s,\kappa_f(1,c_{\nu+1} q_{2\nu+1} q_{2\nu+2}) \rangle_{q_{2 \nu + 2}} \neq 0$$
where the inequality comes from Lemma \ref{ltp}. It is obviously a contradiction and so the statement is proved.

Now we prove the second statement. We fix a family of classes
$$\{s_i\}_{i=1,\dots ,\nu+1} \in \Sel^{-\epsilon_\nu}_{\mathcal{F}(N^-),\mathcal{B}_f}(K,A_{f,1})$$
and we prove that they are linearly dependent. The columns of the matrix $(\loc_{q_{\nu+i}}(s_j))$ (where $1 \leq i \leq \nu$ and $1 \leq j \leq \nu+1$) must be linearly dependent and so there is a non-trivial linear combination $s$ of our classes such that $\loc_{q_{\nu+i}}(s)=0$ for any $1 \leq i \leq \nu$. We assume by contradiction that $s \neq 0$. By Lemma \ref{lli} we can find a Kolyvagin prime prime $q'$ such that $\loc_{q'}(s) \neq 0$ and $\loc_{q'}(\kappa_f(1,c_{\nu+1})) \neq 0$. By Lemma \ref{sfi} we have that $\loc_{q'}(\kappa_f(1,c_{\nu+1}q')) \neq 0$ and so, in particular, $\kappa_f(1,c_{\nu+1}q') \neq 0$. Finally, we have
\begin{flalign*}
&& 0 &=\langle s,\kappa_f(1,c_{\nu+1} q') \rangle &\\
&& &= \displaystyle \sum_{q \in \mathcal{B}_f} \langle s,\kappa_f(1,c_{\nu+1} q') \rangle_q+ \displaystyle \sum_{q|c_{\nu+1} q'} \langle s,\kappa_f(1,c_{\nu+1} q') \rangle_q
\end{flalign*}
and so, as before, we get
$$0 = \langle s, \kappa_f(1,c_{\nu+1}q')\rangle_{q'} \neq 0.$$
This is a contradiction and so $s=0$, i.e., the fixed family of classes is linearly dependent.
\end{proof}

\begin{proposition} \label{parity}
The $\mathbb{F}$-dimension of $\Sel_{\mathcal{F}(N^-)}(K,A_{f,1})$ is odd.
\end{proposition}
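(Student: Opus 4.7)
My plan is to derive the odd parity from Poitou--Tate global duality applied to the self-dual Galois module $A_{f,1}$ with the self-dual Selmer structure $\mathcal{F}(N^-)$, followed by a root-number computation tied to the Heegner configuration.

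First, I would verify that $\mathcal{F}(N^-)$ is self-dual under local Tate duality. Since we have taken the self-dual twist, the Weil pairing identifies $A_{f,1}$ with its Cartier--Tate dual. At primes $v \nmid Np$, the finite/unramified condition is its own orthogonal complement by Tate local duality (\cite[Lemma~3.5]{ES}). At $v \mid N^-$, the ordinary condition $H^1_{\ord}(K_v,A_{f,1})$ is isotropic by Lemma~\ref{ltp}. At $v \mid p$, the equality $H^1_2(K_v,A_{f,1}) = H^1_{\f}(K_v,A_{f,1})$, observed in the proof of Lemma~\ref{lkc}, reduces the self-duality at $p$ to the Bloch--Kato case, where it is classical.

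Second, hypothesis~\ref{BI} (together with \ref{Irr}) forces $H^0(K,A_{f,1})=0$. Combined with the self-duality above, the Greenberg--Wiles formula / Poitou--Tate global duality yields a congruence of the form
$$\dim_{\mathbb{F}} \Sel_{\mathcal{F}(N^-)}(K,A_{f,1}) \equiv \delta \pmod{2},$$
where $\delta\in\{0,1\}$ is an explicit sum of local parity contributions at the archimedean place and at the primes dividing $Np$. I would then compute $\delta=1$ using the Heegner configuration in force---primes of $N^+$ split in $K$, $p$ splits in $K$, and $N^-$ is a product of an even number of inert primes. A standard local root-number computation (paralleling \cite[Proposition~3.22]{LPV} in the ordinary setting and the arguments of \cite{Z} in weight two) shows that the global root number of $L(f/K,s)$ at the central value $s=k/2$ equals $-1$, and the standard dictionary between root numbers and Selmer parity then forces $\delta=1$.

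The main obstacle is the transfer of this root-number--to--parity dictionary from the Bloch--Kato setup to the signed Selmer framework at the non-ordinary prime $p$. However, the identification $H^1_2(K_p,A_{f,1})=H^1_{\f}(K_p,A_{f,1})$ at the residual level (essentially because the torsion lives in a divisible tail) means that the local parity contribution at $p$ is unchanged from the Bloch--Kato case, so the computation of $\delta$ from \cite{LPV} transports to our setting without modification. The remaining contributions at primes of $N$ are controlled by the isotropy statements in Lemma~\ref{ltp} together with a direct inspection of the ordinary subspace coming from \eqref{car}, again exactly as in the ordinary case.
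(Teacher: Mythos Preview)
Your strategy is quite different from the paper's, and it has a real gap. The paper does not attempt a direct parity computation via local duality. Instead it argues by induction on the putative even rank $r$: the base case $r=0$ is impossible because $\epsilon(f/\mathbb{Q})=(-1)^{\nu(N^-)+1}=-1$ forces $L(f/K,k/2)=0$, and then Wan's converse theorem \cite[Corollary~3.34]{WAN} (the deep analytic input specific to the non-ordinary setting) gives $\Sel_{\mathcal{F}(N^-)}(K,A_{f,1})\neq 0$. For the inductive step one raises the level at two successive admissible primes $\ell_1,\ell_2$ (Lemma~\ref{eadm}, Theorem~\ref{lr}, Lemma~\ref{kl}) to produce a newform $h$ of level $\ell_1\ell_2 N$ whose $\mathcal{F}(\ell_1\ell_2 N^-)$-Selmer group has dimension $r$, still even, with $h$ again satisfying Assumption~\ref{Ass2}; this contradicts the inductive hypothesis.

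The gap in your plan is the step ``Greenberg--Wiles / Poitou--Tate yields $\dim\Sel\equiv\delta\pmod 2$, and the root-number dictionary forces $\delta=1$.'' The Greenberg--Wiles formula is an \emph{equality} between the Selmer and dual-Selmer ranks plus local correction terms; for a genuinely self-dual structure the two sides collapse and no parity information survives. Extracting a mod-$2$ statement requires either Nekov\'a\v{r}'s generalized Cassels--Tate machinery or a Mazur--Rubin theory of arithmetic local constants, neither of which you invoke, and neither of which is on the shelf for the residual signed Selmer structure in higher weight at a non-ordinary prime. (The ``standard dictionary'' you appeal to is essentially the $p$-parity conjecture itself.) There is also a smaller issue: Lemma~\ref{ltp} records only that $H^1_{\ord}(K_q,A_{f,1})$ is \emph{isotropic} for $q\mid N^-$, not that it equals its own annihilator, and self-duality of $\mathcal{F}(N^-)$ needs the latter. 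The paper's level-raising argument is designed precisely to trade away these difficulties in exchange for the single analytic input from \cite{WAN}.
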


\begin{proof}
We follow the idea of \cite[Theorem 3.15]{LPV}. We prove by induction that the dimension of our Selmer group can not be an even non-negative integer $r$. For $r=0$ we just need to notice that the root number of $f$ over $\mathbb{Q}$ is
$$\epsilon(f/\mathbb{Q})=(-1)^{\nu (N^-) +1}=-1,$$
so $L(f,k/2)=0$ and, in particular, $L(f/K,k/2)=0$. So by \cite[Corollary 3.34]{WAN} we have $\Sel_{\mathcal{F}(N^-)}(K,A_{f,1}) \neq 0$ and we conclude.

Now we prove the inductive step. Assume that the dimension of our Selmer group is $r+2$ with $r$ an even natural number. By Lemma \ref{eadm} we can choose $s_1 \in \Sel_{\mathcal{F}(N^-)}(K,A_{f,1}) \setminus \{0\}$ and an admissible prime $\ell_1$ such that $\loc_{\ell_1}(s_1) \neq 0$. Then by Theorem \ref{lr} and Lemma \ref{kl} there exists $g \in S_k(\Gamma_0(\ell_1N^-))$ such that
$$\dim_\mathbb{F} \Sel_{\mathcal{F}(\ell_1N^-)}(K,A_{g,1})=\dim_\mathbb{F} \Sel_{\mathcal{F}(N^-)}(K,A_{f,1}) -1=r+1.$$
Again by Lemma \ref{eadm} we can choose $s_2 \in \Sel_{\mathcal{F}(\ell_1N^-)}(K,A_{g,1}) \setminus \{0\}$ and an admissible prime $\ell_2$ such that $\loc_{\ell_2}(s_2) \neq 0$. Obviously $g$ satisfies Assumption \ref{Ass1} and so by Theorem \ref{lr} and Lemma \ref{kl} there exists $h \in S_k(\Gamma_0(\ell_2 \ell_1N^-))$ such that
$$\dim_\mathbb{F} \Sel_{\mathcal{F}(\ell_2 \ell_1N^-)}(K,A_{h,1})=\dim_\mathbb{F} \Sel_{\mathcal{F}(\ell_1N^-)}(K,A_{g,1}) -1=r$$
but by induction hypothesis it can not happen since $h$ satisfies Assumption \ref{Ass2}.
\end{proof}

Finally, we can prove Kolyvagin's conjecture in general.

\begin{theorem}
$\kappa_f^{\st} \neq 0$.
\end{theorem}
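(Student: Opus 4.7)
The plan is to argue by induction on $r:=\dim_\mathbb{F}\Sel_{\mathcal{F}(N^-)}(K,A_{f,1})$, which is odd by Proposition \ref{parity}. The base case $r=1$ is exactly the rank one theorem of Section 6: it yields $\kappa_f(1,1)\neq 0$, hence $\kappa_f^{\st}\neq 0$. For the inductive step ($r=2n+1\geq 3$), the strategy is to drop the Selmer rank by two via a pair of level raisings, invoke the inductive hypothesis on the resulting form, and then transfer non-triviality back.

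More precisely, I would first fix a non-zero $s_1\in\Sel_{\mathcal{F}(N^-)}(K,A_{f,1})$ and apply Lemma \ref{eadm} to produce an admissible prime $\ell_1$ with $\loc_{\ell_1}(s_1)\neq 0$. Theorem \ref{lr} then yields a newform $g\in S_k(\Gamma_0(\ell_1 N))$ with $\overline{\rho_g}\cong\overline{\rho_f}$, and Lemmas \ref{sr} and \ref{kl} give $\dim_\mathbb{F}\Sel_{\mathcal{F}(\ell_1 N^-)}(K,A_{g,1})=r-1=2n$. Since $\ell_1 N^-$ has an odd number of prime factors, $g$ does not satisfy Assumption \ref{Ass2} as such, but it does satisfy Assumption \ref{Ass1}, so Theorem \ref{lr} applies again: choose a non-zero $s_2$ in this intermediate Selmer group, apply Lemma \ref{eadm} to $g$ to obtain an admissible prime $\ell_2$ with $\loc_{\ell_2}(s_2)\neq 0$, and pass to a newform $h\in S_k(\Gamma_0(\ell_1\ell_2 N))$ with $\overline{\rho_h}\cong\overline{\rho_f}$ and $\dim_\mathbb{F}\Sel_{\mathcal{F}(\ell_1\ell_2 N^-)}(K,A_{h,1})=2n-1$. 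Because $\ell_1\ell_2 N^-$ once again has an even number of prime factors and the remaining items of Assumption \ref{Ass2} depend only on $\overline{\rho_h}\cong\overline{\rho_f}$ and on the factorization $N^+\cdot N^-$ (unaltered by the two admissible twists), $h$ satisfies Assumption \ref{Ass2}, and by induction $\kappa_h^{\st}\neq 0$. In particular, some $\kappa_h(1,c)\neq 0$ for a Kolyvagin integer $c$ of $h$ coprime to $\ell_1\ell_2 NpD_K$.

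The main obstacle, and the real work, is the final transfer step: deducing $\kappa_f^{\st}\neq 0$ from $\kappa_h^{\st}\neq 0$. Under the identification $A_{h,1}\cong A_{f,1}$ coming from $\overline{\rho_h}\cong\overline{\rho_f}$, the admissible primes $\ell_1,\ell_2$ are Kolyvagin primes for $f$, so $c\ell_1\ell_2\in\Lambda_{\Kol}(f)$. The plan is to show $\kappa_f(1,c\ell_1\ell_2)\neq 0$ by chasing the reciprocity law (Theorem \ref{rec}) around the tower $f\rightsquigarrow g\rightsquigarrow h$: the non-vanishing of $\kappa_h(1,c)$ is equivalent, modulo $\varpi$, to the non-vanishing of the theta element $\theta_{c,N^+,\ell_1\ell_2 N^-}(\JL(h))$; this theta element agrees modulo $\varpi$ with the corresponding theta element built from $f$, because it depends on $h$ only through $\overline{\rho_h}$ and through the admissibility signs at $\ell_1,\ell_2$; and applying the reciprocity law of Theorem \ref{rec} in the opposite direction, now viewed on the level-$N$ side via Lemma \ref{sfi}, translates this non-vanishing into non-vanishing of $\loc_{\ell_1\ell_2}(\kappa_f(1,c\ell_1\ell_2))$, which forces $\kappa_f(1,c\ell_1\ell_2)\neq 0$ as desired. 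This is the iterated-level-raising mechanism of Zhang and Longo--Pati--Vigni, with the signed Selmer bookkeeping of Section \ref{sec:2} replacing Greenberg's ordinary local conditions; the auxiliary results of this section (in particular Lemmas \ref{lli}--\ref{kl} and Propositions \ref{triangulation}--\ref{parity}) are precisely what make every step carry through in the non-ordinary setting.
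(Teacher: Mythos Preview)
Your induction setup and the two-step level raising to $h$ are the right scaffold, but the transfer step as you describe it has a genuine gap.

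First, the reciprocity chain is misstated. Since $\ell_1\ell_2 N^-$ has an even number of prime factors, the quaternion algebra of that discriminant is \emph{indefinite}; there is no theta element $\theta_{c,N^+,\ell_1\ell_2 N^-}(\JL(h))$ attached to $h$. The correct bridge is the intermediate form $g$: the theta element $\theta_{c,N^+,\ell_1 N^-}(\JL(g))$ on the definite side controls, via Theorem~\ref{rec} applied once with the pair $(f,\ell_1)$ and once with the pair $(h,\ell_2)$, both $\loc_{\ell_1}(\kappa_f(1,c))$ and $\loc_{\ell_2}(\kappa_h(1,c))$. This yields the equivalence $\loc_{\ell_1}(\kappa_f(1,c))\neq 0 \Leftrightarrow \loc_{\ell_2}(\kappa_h(1,c))\neq 0$ (this is the content of \cite[Corollary 2.18]{LPV}), and the conclusion one wants is $\kappa_f(1,c)\neq 0$ for the \emph{same} $c$, not $\kappa_f(1,c\ell_1\ell_2)\neq 0$.

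Second, and more seriously, the equivalence above only helps if one can find $c$ with $\loc_{\ell_2}(\kappa_h(1,c))\neq 0$, i.e.\ one must show that $\ell_2$ is \emph{not a base point} for $\kappa_h^{\st}$. This is not automatic and is where the real work lies. The paper secures this by choosing $s_1$ and $s_2$ in the \emph{same} $\epsilon$-eigenspace for complex conjugation, where $\epsilon$ is the sign with the larger Selmer eigenspace; your proposal omits this sign bookkeeping entirely. With that choice in place, one argues by contradiction using Proposition~\ref{triangulation}: if $\ell_2\in\mathcal{B}_h$, then comparing $\dim_\mathbb{F}\Sel^{\pm\epsilon}_{\mathcal{F}(\ell_1\ell_2 N^-)}(K,A_{h,1})$ with $\nu_h$ in each of the two possible parities of $\epsilon_{\nu_h}$ produces an inequality of the form $\nu_h+1\leq\nu_h$. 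Your final sentence gestures at Proposition~\ref{triangulation}, but the argument you actually wrote never invokes it, and without the $\epsilon$-eigenspace constraint on $s_1,s_2$ the base-point argument cannot even be set up.
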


\begin{proof}
We follow the idea of \cite[Theorem 3.17]{LPV}. We prove the theorem by induction on $r=\dim_\mathbb{F} \Sel_{\mathcal{F}(N^-)}(K,A_{f,1})$. We know by Proposition \ref{parity} that $r$ must be odd and we already proved the case $r=1$ in the previous section. We assume that the theorem is true in case $r$ and we prove it in case $r+2$. We choose $\epsilon \in \{ \pm 1\}$ such that
$$\dim_\mathbb{F}\Sel_{\mathcal{F}(N^-)}^\epsilon (K,A_{f,1}) > \dim_\mathbb{F}\Sel_{\mathcal{F}(N^-)}^{-\epsilon}(K,A_{f,1}).$$
We choose $s_1 \in \Sel^\epsilon_{\mathcal{F}(N^-)}(K,A_{f,1}) \setminus \{0\}$ and by Lemma \ref{eadm} we know that there exists an admissible prime $\ell_1$ such that $\loc_{\ell_1}(s_1) \neq 0$. Then by Theorem \ref{lr} and Lemma \ref{kl} there exists $g \in S_k(\Gamma_0(\ell_1N^-))$ such that
$$\dim_\mathbb{F} \Sel_{\mathcal{F}(\ell_1N^-)}(K,A_{g,1})=\dim_\mathbb{F} \Sel_{\mathcal{F}(N^-)}(K,A_{f,1}) -1=r+1.$$
By Lemma \ref{kl} we also have that
$$\dim_\mathbb{F} \Sel^\epsilon_{\mathcal{F}(\ell_1N^-)}(K,A_{g,1})=\dim_\mathbb{F} \Sel^\epsilon_{\mathcal{F}(N^-)}(K,A_{f,1}) -1.$$
Again, we can choose $s_2 \in \Sel^\epsilon_{\mathcal{F}(\ell_1N^-)}(K,A_{g,1}) \setminus \{0\}$ and an admissible prime $\ell_2$ such that $\loc_{\ell_2}(s_2) \neq 0$. Since $g$ satisfies Assumption \ref{Ass1}, by Theorem \ref{lr} and Lemma \ref{kl} there exists $h \in S_k(\Gamma_0(\ell_2 \ell_1N^-))$ such that
$$\dim_\mathbb{F} \Sel_{\mathcal{F}(\ell_2 \ell_1 N^-)}(K,A_{h,1})=\dim_\mathbb{F} \Sel_{\mathcal{F}(\ell_1N^-)}(K,A_{g,1}) -1=r.$$
We observe that $h$ satisfies Assumption \ref{Ass2} and so by induction we know that $\kappa_h^{\st} \neq 0$. By \cite[Corollary 2.18]{LPV} we have
$$\loc_{\ell_1}(\kappa_f(1,c)) \neq 0 \Leftrightarrow \loc_{\ell_2}(\kappa_h(1,c)) \neq 0$$
for any $c \in \Lambda_{\Kol}(h)$ and so we just need to prove that $\ell_2$ is not a base point for $\kappa_h^{\st}$. Since $\dim_\mathbb{F} \Sel_{\mathcal{F}(\ell_2 \ell_1N^-)}(K,A_{h,1})$ is odd we can distinguish two different case.
\begin{itemize}
\item Let us assume that
$$\dim_\mathbb{F} \Sel^\epsilon_{\mathcal{F}(\ell_2 \ell_1N^-)}(K,A_{h,1}) > \dim_\mathbb{F} \Sel^{- \epsilon}_{\mathcal{F}(\ell_2 \ell_1N^-)}(K,A_{h,1})$$
and so $\epsilon_{\nu_h}=\epsilon$. By Proposition \ref{triangulation} we have
$$\Sel^\epsilon_{\mathcal{F}(\ell_2 \ell_1N^-),\mathcal{B}_h}(K,A_{h,1}) = \Sel^\epsilon_{\mathcal{F}(\ell_2 \ell_1N^-)}(K,A_{h,1}).$$
Since $\loc_{\ell_2}(s_2) \neq 0$ we know by Lemma \ref{kl} that $s_2$ does not belong to the latter and, so, to the former. On the other hand $s_2 \in \Sel^\epsilon_{\mathcal{F}^{\ell_2}(\ell_1N^-)}(K,A_{h,1})$ and so $\ell_2 \notin \mathcal{B}_h$.
\item Let us assume that
$$\dim_\mathbb{F} \Sel^\epsilon_{\mathcal{F}(\ell_2 \ell_1N^-)}(K,A_{h,1}) < \dim_\mathbb{F} \Sel^{- \epsilon}_{\mathcal{F}(\ell_2 \ell_1N^-)}(K,A_{h,1}).$$
In this case we have $\epsilon_{\nu_h}=-\epsilon$ and so
$$\dim_\mathbb{F} \Sel^{- \epsilon}_{\mathcal{F}(\ell_2 \ell_1N^-)}(K,A_{h,1}) =\nu_h +1, \:\:\:\:\:\:\:\: \dim_\mathbb{F} \Sel^\epsilon_{\mathcal{F}(\ell_2 \ell_1N^-),\mathcal{B}_h}(K,A_{h,1}) \leq \nu_h$$
by Proposition \ref{triangulation}. Then
\begin{flalign*}
&& \dim_\mathbb{F} \Sel^\epsilon_{\mathcal{F}(\ell_1N^-)}(K,A_{g,1}) &=\dim_\mathbb{F} \Sel^{-\epsilon}_{\mathcal{F}(\ell_1N^-)}(K,A_{g,1}) &\\
&& &\geq \dim_\mathbb{F} \Sel^{-\epsilon}_{\mathcal{F}(\ell_2 \ell_1N^-)}(K,A_{h,1}) =\nu_h+1
\end{flalign*}
where we used Lemma \ref{kl}. If $\ell_2$ is a base point for $\kappa_h^{\st}$ then we have
$$\nu_h+1 \leq \dim_\mathbb{F} \Sel^\epsilon_{\mathcal{F}(\ell_1N^-)}(K,A_{g,1}) \leq \dim \Sel^\epsilon_{\mathcal{F}(\ell_2 \ell_1N^-),\mathcal{B}_h}(K,A_{h,1}) \leq \nu_h$$
and it is obviously a contradiction.
\end{itemize}
\end{proof}

\section{The $p$-part of the Tamagawa number conjecture} \label{sec:8}

In \cite{LV}, Longo and Vigni give a formulation of the $p$-part of the Tamagawa number conjecture for modular motives of generic rank. We recall briefly the statement and the objects involved, for a more precise and complete exposition we refer to \cite[$\mathsection 2$]{LV}. Let $k \geq 2$ be an even positive integer, let $N \geq 3$ be a positive integer, let $p \nmid 6N$ be a rational prime and let $f \in S_k(\Gamma_0(N))$ be a newform of weight $k$, level $N$ and trivial character with $q$-expansion
$$f(q)=\displaystyle \sum_{n \geq 1}a_n(f)q^n.$$
We denote by $E$ the Hecke field of $f$, i.e., $E:=\mathbb{Q}(\{a_n(f)\})$ and by $\rho_{f,\mathfrak{p}}$ the self-dual twist of the Galois representation attached to $f$ and to a prime $\mathfrak{p}$ of $E$ lying over $p$. We also denote by $V_\mathfrak{p}$ its underlying $E_\mathfrak{p}$-vector space, by $T_\mathfrak{p}$ a $G_\mathbb{Q}$-stable lattice of $V_\mathfrak{p}$ and by $A_\mathfrak{p}:=V_\mathfrak{p}/T_\mathfrak{p}$. Furthermore we set
$$\mathcal{O}_p:= \bigoplus_{\mathfrak{p}|p} \mathcal{O}_\mathfrak{p}, \:\:\:\:\:\: E_p:=\bigoplus_{\mathfrak{p}|p} E_\mathfrak{p}, \:\:\:\:\:\: V_p:=\bigoplus_{\mathfrak{p}|p} V_\mathfrak{p}, \:\:\:\:\:\: T_p:=\bigoplus_{\mathfrak{p}|p} T_\mathfrak{p}, \:\:\:\:\:\: A_p:=\bigoplus_{\mathfrak{p}|p} A_\mathfrak{p}.$$
Finally, we denote by $\mathcal{M}$ the modular motive attached to $f$ (see \cite[$\mathsection 2.2$]{LV}).

The statement of the conjecture involves the following objects:

\begin{itemize}
\item $L^*(\mathcal{M},0)$ is the leading term of the Taylor expansion of $L(\mathcal{M},s)$ at $s=0$ where $L(\mathcal{M},s)$ is the $L$-function of $\mathcal{M}$ over $\mathbb{Q}$ as defined in \cite[Definition 2.21]{LV};
\item $\Reg_\mathscr{B}(\mathcal{M})$ is the determinant of the Gillet--Soul\'e height pairing where $\mathscr{B}$ is a fixed $\mathbb{Q}$-basis of the motivic cohomology $H^1_{\mot}(\mathbb{Q},\mathcal{M})$ (see \cite[$\mathsection 2.6, \mathsection 2.7$]{LV});
\item $\shaf_p^{\BK}(\mathbb{Q}, \mathcal{M}):= \bigoplus_{\mathfrak{p}|p}(\Sel_{\BK}(\mathbb{Q},A_\mathfrak{p})/\Sel_{\BK}(\mathbb{Q},A_\mathfrak{p})_{\divv})$ is the Shafarevich--Tate group of $\mathcal{M}$ over $\mathbb{Q}$;
\item $\gamma_f$ is an element of the Betti realization of $\mathcal{M}$ and $\mathcal{I}_p(\gamma_f)$ is an $\mathcal{O}_p$-ideal attached to it (see \cite[$\mathsection 2.23.3$]{LV});
\item $\Omega_\infty:=\Omega_\infty^{(\gamma_f)}$ is the period of $f$ relative to $\gamma_f$ as defined in \cite[Definition 2.8]{LV};
\item $\Tam^p_q(\mathcal{M}):= \bigoplus_{\mathfrak{p}|p} \Tam_q(\mathbb{Q},A_\mathfrak{p})$ is the $p$-part of the Tamagawa ideal of $\mathcal{M}$ over $\mathbb{Q}$ at any rational prime $q$;
\item $\Tam^p_\infty(\mathcal{M}):=\Fitt_{\mathcal{O}_p}(H^1_{\f}(\mathbb{R},T_p))$ is the $p$-part of the Tamagawa ideal of $\mathcal{M}$ over $\mathbb{Q}$ at $\infty$;
\item $A_{\tilde{\mathscr{B}}} \in \GL_r(E_p)$ (where $r:=\dim_\mathbb{Q} H^1_{\mot}(\mathbb{Q},\mathcal{M})$ is the algebraic rank of $\mathcal{M}$) is a transition matrix defined in \cite[$\mathsection 2.23.2$]{LV};
\item $\Tors_p(\mathcal{M}):=\Fitt_{\mathcal{O}_p}(H^0(\Gal(L/\mathbb{Q}),A_p)^{\vee}) \cdot \Fitt_{\mathcal{O}_p}(H^1(\mathbb{Q},T_p)_{\tors})$ is the $p$-torsion part of $f$ where $L$ is the maximal extension of $\mathbb{Q}$ that is unramified outside $Np$ and $( \lozenge )^{\vee}$ denotes the Pontryagin dual.
\end{itemize}

Furthermore, the conjecture can be stated under the assumption that the following three conjectures are true. See, respectively, \cite[$ \mathsection 2.7$]{LV}, \cite[$\mathsection 2.11$]{LV} and \cite[$ \mathsection 2.14$]{LV} for details.

\begin{conjecture} \label{conj1}
The Gillet--Soul\'e height pairing is non-degenerate.
\end{conjecture}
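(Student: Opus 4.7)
The plan is to reduce the non-degeneracy of the archimedean Gillet--Soul\'e pairing on $H^1_{\mot}(\mathbb{Q},\mathcal{M})_{\mathbb{Q}}$ to a $p$-adic non-triviality statement about Heegner cycles that one can attack with the Kolyvagin theory developed in this paper. First, via the $p$-adic Abel--Jacobi map and the syntomic/Bloch--Kato comparison, I would transport the question to the corresponding $p$-adic height pairing on the image of motivic cohomology inside $\Sel_{\BK}(\mathbb{Q},V_\mathfrak{p})$, so that non-degeneracy of $\langle\cdot,\cdot\rangle_{GS}$ over $\mathbb{Q}$ becomes non-degeneracy of this $p$-adic pairing on a lattice of full rank inside $\Sel_{\BK}$. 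One must also argue that the image of motivic cohomology does span such a full-rank lattice, which itself will be a consequence of the Heegner-cycle generation output in the next step.

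Second, I would use Theorem \ref{Kconj} as the Euler-system input: for any nonzero $s\in\Sel_{\BK}(\mathbb{Q},V_\mathfrak{p})$ there is a Kolyvagin class $\kappa_f(M,c)$ whose localization at some admissible prime $\ell$ pairs nontrivially with $\loc_\ell(s)$ under the local Tate pairing of Lemma \ref{ltp}. Combined with the global reciprocity (sum-of-local-pairings law) in the same lemma, this forces the arithmetic Heegner cycles $y_c=\AJ_{k,d,c}(Y_c)$ to span $\Sel_{\BK}$ up to torsion, producing in particular a generating set of $\Sel_{\BK}(\mathbb{Q},V_\mathfrak{p})$ consisting of Heegner cycles. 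This is the analogue, on the motivic side, of the rank-bound half of Kolyvagin's method.

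Third, one must promote this linear-algebra picture to non-degeneracy of the height pairing on the cycles themselves. The natural approach is an induction on the algebraic rank via level raising in the style of Section \ref{sec:7}: choose two admissible primes via Lemma \ref{eadm}, raise the level to obtain a newform $h$ whose residual Selmer group has dimension two less, apply the inductive hypothesis to $h$, and transfer non-degeneracy back along the resulting Jochnowitz-style congruence using the Wang reciprocity law of Theorem \ref{rec} as the compatibility link between the respective Heegner systems. The main obstacle, and the reason this strategy does not yet deliver an unconditional proof in the generality of Conjecture \ref{conj1}, is the absence of a higher-rank Gross--Zagier formula for $\langle y_c,y_c\rangle_p$ in non-ordinary weight $k$: without such a formula the induction cannot be anchored by an explicit non-triviality of a single self-pairing, so the strategy becomes unconditional only in algebraic ranks zero (trivially) and one (via a Nekov\'a\v{r}-type height formula), and in higher rank it is conditional on a non-triviality theorem for $p$-adic heights of the higher Heegner cycles produced by Kolyvagin's descent.
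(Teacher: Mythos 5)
This statement is not proved in the paper, and it cannot be: Conjecture \ref{conj1} is one of the three standing conjectures (together with Conjectures \ref{conj2} and \ref{conj3}) that the author \emph{assumes} in order to even formulate the $p$-part of the Tamagawa number conjecture (Conjecture \ref{pTNCconj}). Non-degeneracy of the Gillet--Soul\'e (Beilinson--Bloch) height pairing is a deep open problem, and nothing in Sections \ref{sec:6}--\ref{sec:8} is aimed at it. So there is no proof in the paper to compare yours against, and your proposal --- which by its own admission in the final paragraph does not deliver an unconditional argument beyond algebraic rank one --- should be understood as a heuristic, not a proof.

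Beyond that structural point, the reductions you propose have concrete gaps. Your first step, transporting non-degeneracy of the archimedean Gillet--Soul\'e pairing to non-degeneracy of a $p$-adic height pairing on the image of $H^1_{\mot}(\mathbb{Q},\mathcal{M})$ in $\Sel_{\BK}(\mathbb{Q},V_\mathfrak{p})$, relies on two unproved inputs: (i) that the $p$-adic regulator identifies motivic cohomology with the Bloch--Kato Selmer group, which is exactly Conjecture \ref{conj3} of the paper, another standing hypothesis; and (ii) a comparison between archimedean and $p$-adic heights strong enough to transfer non-degeneracy, which is not known in either direction. Your second step misreads what the Kolyvagin machinery provides: Theorem \ref{Kconj}, Lemma \ref{ltp} and Theorem \ref{rec} control the $\mathbb{F}$-dimension of residual Selmer groups and the non-vanishing of cohomology classes via the local Tate pairing at Kolyvagin primes; they give no information about self-pairings $\langle y_c, y_c\rangle$ under any height pairing, archimedean or $p$-adic. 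Even in the classical weight-two case, Kolyvagin plus Gross--Zagier yields non-degeneracy only in analytic rank at most one; in higher rank the question is open, which is precisely why the paper keeps Conjecture \ref{conj1} as a hypothesis rather than a theorem.
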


\begin{conjecture} \label{conj2}
One has
$$\frac{L^*(\mathcal{M},0)}{\Omega_\infty \Reg_\mathscr{B}(\mathcal{M)}} \in E^\times.$$
\end{conjecture}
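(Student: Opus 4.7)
The plan is to establish Conjecture \ref{conj2} in the two ranges of analytic rank that are of interest for \autoref{sec:8}, namely analytic rank zero and one, by reducing in each case to a classical algebraicity statement for $L$-values of modular forms.

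When the analytic rank of $f$ is zero, $L(f,k/2)\neq 0$, and one expects $H^1_{\mot}(\mathbb{Q},\mathcal{M})=0$: this is the motivic counterpart to the vanishing of $\Sel_{\BK}(\mathbb{Q},A_\mathfrak{p})$ at analytic rank zero and is part of the Beilinson--Bloch package once Conjecture \ref{conj1} is in force. Then $L^*(\mathcal{M},0)=L(f,k/2)$ up to the standard normalization and $\Reg_\mathscr{B}(\mathcal{M})$ is an empty determinant equal to $1$. The claim therefore reduces to the statement that $L(f,k/2)/\Omega_\infty\in E^\times$, which is a well known consequence of Shimura's algebraicity theorem for critical values of $L$-functions of modular forms, provided one checks that the period $\Omega_\infty=\Omega_\infty^{(\gamma_f)}$ of \cite[Definition~2.8]{LV} agrees, up to a factor in $E^\times$, with the canonical period $\Omega_f$ recalled in \autoref{sec:4}.

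When the analytic rank of $f$ is one, one expects $H^1_{\mot}(\mathbb{Q},\mathcal{M})$ to be one dimensional over $\mathbb{Q}$ and generated by the motivic class of a Heegner cycle $Y_1$ of the type constructed in \autoref{sec:3}. Granting Conjecture \ref{conj1}, $\Reg_\mathscr{B}(\mathcal{M})$ is then the Gillet--Soul\'e self-height of this generator. The required algebraicity follows from the higher weight Gross--Zagier formula of Zhang and Besser, which expresses $L'(f/K,k/2)$ as a product of $\Omega_\infty\cdot\langle Y_1,Y_1\rangle$ with explicit algebraic factors, combined with Shimura's algebraicity applied to the twist $f^K$ (which has analytic rank zero, so that $L(f^K,k/2)/\Omega_{f^K}\in E^\times$) in order to extract $L'(f,k/2)$ from $L'(f/K,k/2)=L'(f,k/2)L(f^K,k/2)$.

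The main obstacle lies in the careful matching of normalizations. Concretely, one has to verify that the motivic period $\Omega_\infty$, defined via the Betti class $\gamma_f$ and the comparison isomorphism with de Rham cohomology, coincides up to $E^\times$ with the canonical period $\Omega_f^\epsilon$ of \autoref{sec:4}; and, in rank one, that the Gillet--Soul\'e height of the motivic class of $Y_1$ is compatible, again up to $E^\times$, with the Beilinson--Bloch height of the geometric Heegner cycle that appears on the analytic side of the higher weight Gross--Zagier formula. Both compatibilities are known to specialists but require careful bookkeeping of the many period and index factors (the projectors $\epsilon_d$, $\epsilon_k$, the factor $\eta_g(\ell N)$, etc.) that enter the definitions on the two sides; this bookkeeping is where the real work of the argument lies.
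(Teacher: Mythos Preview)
The paper does not prove Conjecture \ref{conj2}. It is stated explicitly as one of three standing conjectures (along with Conjectures \ref{conj1} and \ref{conj3}) that are \emph{assumed} in order even to formulate Conjecture \ref{pTNCconj}; see the sentence immediately preceding them: ``the conjecture can be stated under the assumption that the following three conjectures are true.'' There is no proof to compare your proposal against.

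Your proposal is not a proof of Conjecture \ref{conj2} either, but rather a sketch of how one might hope to verify it in ranks zero and one, conditional on further unproved input. In rank zero you need $H^1_{\mot}(\mathbb{Q},\mathcal{M})=0$, and in rank one you need $H^1_{\mot}(\mathbb{Q},\mathcal{M})$ to be one-dimensional and generated by a Heegner cycle; both of these are instances of the Beilinson--Bloch conjecture and are not known in the generality of \autoref{sec:8}. You flag these as ``expected,'' but without them the argument does not get off the ground, so what you have written is at best a heuristic explanation of why the conjecture is plausible, not a proof. The paper is consistent in treating this as an open hypothesis rather than something established.
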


\begin{conjecture} \label{conj3}
The $p$-adic regulator map
$$\reg_p:H^1_{\mot}(\mathbb{Q},\mathcal{M}) \otimes_E E_p \rightarrow \Sel_{\BK}(\mathbb{Q},V_p)$$
is an isomorphism.
\end{conjecture}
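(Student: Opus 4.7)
The plan is to establish Conjecture \ref{conj3} in the two cases relevant to \autoref{sec:8}, namely when the modular motive $\mathcal{M}$ attached to $f$ has analytic rank zero or one; in both cases the conjecture reduces to statements that the Kolyvagin machinery of Sections \ref{sec:5}--\ref{sec:7} makes tractable.

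In the analytic rank zero case, the idea is to show that both sides of $\reg_p$ vanish, so that the map is vacuously an isomorphism. On the Selmer side, non-vanishing of $L(\mathcal{M},0)$ combined with Theorem \ref{pTNC} and the splitting and control-theorem arguments used in \autoref{sec:6} forces $\Sel_{\BK}(\mathbb{Q},V_p)=0$. On the motivic side, the Bloch--Beilinson philosophy predicts $H^1_{\mot}(\mathbb{Q},\mathcal{M})=0$ in this range; concretely, Heegner cycles are torsion whenever $L(f/K,k/2)\neq 0$ by the ``easy'' direction of the higher-weight Gross--Zagier formula, and invoking Bloch--Beilinson to exhaust the motivic cohomology one concludes that $\reg_p$ is the zero map between trivial spaces.

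In the analytic rank one case, the plan is to exhibit explicit generators on both sides and show that $\reg_p$ identifies them up to a non-zero scalar. The Heegner cycle $Y_1\in H^1_{\mot}(\mathbb{Q},\mathcal{M})$ constructed in \autoref{sec:3} is non-torsion by the higher-weight Gross--Zagier formula and, combined with the Bloch--Beilinson rank prediction, generates a one-dimensional $E$-subspace. On the Selmer side, Theorem \ref{Kconj} together with the triangulation of Proposition \ref{triangulation} and the parity statement of Proposition \ref{parity} pins down $\dim_{E_p}\Sel_{\BK}(\mathbb{Q},V_p)=1$. The image $\reg_p(Y_1)=y_1=\AJ_{k,d,1}(Y_1)$ is non-zero in the Selmer group because its mod-$\varpi$ reduction coincides, up to a unit supplied by Theorem \ref{rec}, with the non-trivial Kolyvagin class $\kappa_f(1,1)$ produced in \autoref{sec:6}. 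A non-zero map between one-dimensional $E_p$-vector spaces is automatically an isomorphism.

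The main obstacle is the unconditional control of $H^1_{\mot}(\mathbb{Q},\mathcal{M})$: showing that the motivic cohomology has the predicted dimension (zero or one, according to the analytic rank) is an instance of the Bloch--Beilinson conjecture not currently known for general higher-weight modular motives, so any fully rigorous proof has to either take this as an input or restrict to cases where enough of the conjecture is available. A secondary technical point to verify is that the regulator map of \cite{LV} is compatible with the $p$-adic Abel--Jacobi map $\AJ_{k,d,c}$ of \autoref{sec:3} in the non-ordinary setting; this should follow from the crystalline description of the signed local conditions of \cite{LLZ} and the comparison results between $H^1_{\f}$ and $H^1_2$ at $p$ exploited throughout the paper.
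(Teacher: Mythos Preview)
The statement you are trying to prove is labelled \emph{Conjecture} in the paper and is treated as such throughout: the paper offers no proof of it, and in the applications of \autoref{sec:8} it (or its integral refinement $\reg_{\mathbb{Q},p}^{\intt}$) is inserted explicitly as a hypothesis, e.g.\ in Assumption~\ref{Ass4} one assumes both that $\reg_{\mathbb{Q},p}^{\intt}$ is an isomorphism and that Conjecture~\ref{conj3} holds for the motive attached to $f^K$. So there is no ``paper's own proof'' to compare against; the statement is a standing conjecture, not a result of the paper.

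Your proposal is therefore not a proof but a sketch of why the conjecture \emph{should} hold in low analytic rank, and you correctly identify the fatal gap yourself: controlling $\dim_E H^1_{\mot}(\mathbb{Q},\mathcal{M})$ is an instance of the Bloch--Beilinson conjecture, which is open for higher-weight modular motives. The Kolyvagin and Iwasawa-theoretic input you cite (Theorem~\ref{pTNC}, Theorem~\ref{Kconj}, Propositions~\ref{triangulation} and~\ref{parity}) gives information only about the Galois-cohomological side $\Sel_{\BK}(\mathbb{Q},V_p)$, not about the motivic side; the regulator map goes from algebraic cycles to Selmer classes, and nothing in the paper bounds the supply of algebraic cycles from above. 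In particular, in rank zero you cannot conclude $H^1_{\mot}(\mathbb{Q},\mathcal{M})=0$ from $\Sel_{\BK}(\mathbb{Q},V_p)=0$ without already knowing injectivity of $\reg_p$, and in rank one you cannot conclude that the Heegner cycle spans the motivic cohomology without Bloch--Beilinson. The honest situation is exactly what the paper records: Conjecture~\ref{conj3} is assumed, not proved.
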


Finally, we can state our conjecture.

\begin{conjecture} \label{pTNCconj}
Assume Conjectures \ref{conj1}, \ref{conj2} and \ref{conj3} hold true. Then the equality of fractional $\mathcal{O}_p$-ideals
$$\left( \frac{(k/2-1)! L^*(\mathcal{M},0)}{ \Omega_\infty \Reg_{\mathscr{B}}(\mathcal{M})} \right)= \frac{\Fitt_{\mathcal{O}_p}(\shaf_p^{\BK}(\mathbb{Q},\mathcal{M})) \mathcal{I}_p( \gamma_f) \prod_{q | pN\infty}\Tam^p_q(\mathcal{M})}{(\det(A_{\tilde{\mathscr{B}}}))^2 \Tors_p(\mathcal{M})}$$
holds.
\end{conjecture}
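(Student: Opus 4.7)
The plan is to establish Conjecture \ref{pTNCconj} only in the analytic rank zero and rank one cases, which is what our previous sections make accessible. The first move is to unpack the conjectural equality: match the motivic quantities on the right-hand side with the $p$-adic objects already studied. Specifically, I would identify $\shaf_p^{\BK}(\mathbb{Q},\mathcal{M})$ with $\Sel_{\BK}(\mathbb{Q},A_f)/\Sel_{\BK}(\mathbb{Q},A_f)_{\divv}$, absorb the $\det(A_{\tilde{\mathscr{B}}})$ and $\Tors_p(\mathcal{M})$ factors into the comparison between $\reg_p$ of Conjecture \ref{conj3} and the arithmetic lattice, and translate $\mathcal{I}_p(\gamma_f)/\Omega_\infty$ into a canonical $p$-adic period comparable to the $\Omega_f^\pm$ used by Wan in Theorem \ref{pTNC}. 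Once this bookkeeping is done, the conjecture reduces to a valuation identity linking $L$-values, $\length_{\mathcal{O}}\Sel_{\BK}$, Tamagawa exponents and (in rank one) a regulator/height term.

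For the analytic rank zero case, i.e.\ $L(f,k/2) \neq 0$, the argument is essentially a repackaging of Theorem \ref{pTNC}. Since $r=0$, Conjecture \ref{conj1} is vacuous, the motivic regulator equals $1$, and $\Reg_\mathscr{B}(\mathcal{M})$ contributes trivially. Wan's formula
\[
v_\varpi\!\left(\frac{L(f,k/2)}{(2\pi i)^{k/2-1}\Omega_f}\right) = \length_{\mathcal{O}} \Sel_{\BK}(\mathbb{Q},A_f) + \sum_{q\mid Np} t_q(\mathbb{Q},A_f)
\]
then gives the desired equality of fractional $\mathcal{O}_p$-ideals after summing over all $\mathfrak{p} \mid p$ and accounting for the archimedean Tamagawa factor $\Tam^p_\infty(\mathcal{M})$. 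The verification of Wan's hypotheses in our setting was already carried out in Section \ref{sec:6}.

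For the analytic rank one case the plan is to pick an auxiliary imaginary quadratic field $K$ as in Assumption \ref{Heeg} such that $L(f^K,k/2) \neq 0$ and $L'(f,k/2) \neq 0$, so that $L(f/K,s)$ has a simple zero at $s=k/2$. Applying the rank zero case to $f^K$ controls $\Sel_{\BK}(\mathbb{Q},A_{f^K})$ exactly, and via the splitting $\Sel_{\BK}(K,A_f) = \Sel_{\BK}(\mathbb{Q},A_f) \oplus \Sel_{\BK}(\mathbb{Q},A_{f^K})$ established in the proof of Lemma \ref{splitting}, the problem reduces to the $\mathbb{Q}$-piece. Here I would invoke our Theorem \ref{Kconj} together with the general Kolyvagin machinery to bound $\Sel_{\BK}(\mathbb{Q},A_f)/\Sel_{\BK}(\mathbb{Q},A_f)_{\divv}$ by the index of the Heegner cycle $y_1$ in $\Sel_{\BK}(K,A_f)$, and use a higher-weight Gross--Zagier formula to replace that index by $L'(f/K,k/2)/\Omega_{f,N^-}$ up to controlled Tamagawa contributions. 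Combining these with the rank zero identity for $f^K$ yields the rank one Tamagawa formula, with the Gillet--Soul\'e height pairing computing $\Reg_\mathscr{B}(\mathcal{M})$ via the $p$-adic height of $y_1$.

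The principal obstacle is the comparison of transcendental data. The period $\Omega_f$ appearing in Wan's theorem is defined through the Eichler--Shimura map and a choice of integral generator of $H^1(\Gamma_0(N),\Sym^{k-2}(\mathcal{O}))_{\mathfrak{m}_f}^\pm[I_f]$, whereas $\Omega_\infty$ in Conjecture \ref{pTNCconj} comes from the Betti--de Rham comparison of $\mathcal{M}$ and depends on $\gamma_f$; reconciling them introduces precisely $\mathcal{I}_p(\gamma_f)$ and requires a careful passage through the integral structures used in \cite{LV}. The analogous matching in rank one, between the Gross--Zagier period $\Omega_{f,N^-}$ and $\Omega_\infty\Reg_\mathscr{B}(\mathcal{M})$, is the delicate step, because it must be done compatibly with the choice of motivic basis encoded in $A_{\tilde{\mathscr{B}}}$. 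Once these identifications are made explicit, the Tamagawa number conjecture in rank zero and one follows from the combination of Theorem \ref{pTNC}, Theorem \ref{Kconj}, and the splitting and control arguments already in Section \ref{sec:6}.
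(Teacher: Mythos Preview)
Your outline is essentially the paper's own route: the statement is a \emph{conjecture}, and the paper only establishes it in the analytic rank zero and one cases (Theorems \ref{pTNCzero} and the subsequent rank one theorem), under the additional Assumptions \ref{Ass3} and \ref{Ass4} respectively. You correctly restrict to these ranks, and your rank zero argument---Wan's formula plus the observation that $\Reg_{\mathscr{B}}(\mathcal{M})=1$, $\det(A_{\tilde{\mathscr{B}}})=1$, $\Tors_p(\mathcal{M})=\mathcal{O}_p$, $\mathcal{I}_p(\gamma_f)=\mathcal{O}_p$, $\Tam^p_\infty(\mathcal{M})=\mathcal{O}_p$---is exactly what the paper does, with each of these trivializations cited from \cite{LV}. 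Your rank one strategy (auxiliary $K$ with $\ord_{s=k/2}L(f^K,s)=0$, splitting of Selmer groups, Kolyvagin's conjecture via Theorem \ref{Kconj}, Gross--Zagier) is likewise the paper's, which simply points to \cite[\S4.7--4.9]{LV} for the details and notes that the two ordinary-specific inputs there---Kolyvagin's conjecture for $f$ and the rank zero $p$-TNC for $f^K$---are now available in the non-ordinary case.

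Two points of calibration. First, you should make explicit that additional hypotheses beyond those of Conjecture \ref{pTNCconj} are required (Assumptions \ref{Ass3}, \ref{Ass4}); in particular the rank one case needs the choice of $K$ with \emph{all} primes dividing $Np$ split (so $N^-=1$, not merely the Heegner hypothesis of Assumption \ref{Heeg}), together with injectivity of the relevant Abel--Jacobi maps and the integral regulator isomorphism. Second, the period comparison you identify as the ``principal obstacle'' is genuine but is not carried out in this paper: both the identification of $\Omega_f$ with $\Omega_\infty$ up to $\mathcal{I}_p(\gamma_f)$ and the rank one matching of $\Omega_\infty\Reg_{\mathscr{B}}(\mathcal{M})$ with Gross--Zagier data are imported wholesale from \cite{LV}, so you should cite rather than attempt to reprove them.
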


\subsection{Proof for modular motives of analytic rank zero}

We are going to consider the following objects:
\begin{itemize}
\item $X_{N,k}$ is the Kuga--Sato variety of weight $k$ over the modular curve $X(N)$;
\item $\Heeg_{K,N} \leq \CH_0^{k/2}(X_{N,k}/H_K)$ is the Heegner module of level $N$ attached to an imaginary quadratic field $K$ where all the prime factors of $N$ split and such that its Hilbert class field is $H_K$ (see \cite[$\mathsection 4.1.3$]{LV});
\item $\AJ_{f,K,p}:\Heeg_{K,N}^{\Gal(H_K/K)} \rightarrow H^1(K,T_p)$ is the Abel-Jacobi map attached to $f$ as defined in \cite[(4.8)]{LV};
\item $\reg_{\mathbb{Q},p}^{\intt}$ is the integral variant of the $p$-adic regulator map defined in \cite[$\mathsection 3.4.3$]{LV};
\item $\mathfrak{a}_{f,\Gamma(N)}$ is an ideal of $\mathcal{O}_E$ defined in \cite[$\mathsection 4.3$]{LV}.
\end{itemize}

Tamagawa number conjecture for modular motives of analytic rank zero holds true if we assume the following technical hypotheses.

\begin{assumption} \label{Ass3}~
\begin{itemize}
\item $k<p$;
\item $p \nmid  \N(\mathfrak{a}_{f,\Gamma(N)})$;
\item $T_\mathfrak{p}$ is residually irreducible for any $\mathfrak{p}|p$;
\item $\SL_2(\mathbb{Z}_p) \subset \rho_{f,\mathfrak{p}}(G_\mathbb{Q})$ for any $\mathfrak{p}|p$;
\item there exists $q' \parallel N$ such that $a_{q'}(f)=-(q')^{k/2-1}$;
\item there exists an imaginary quadratic field $K$ such that
\begin{itemize}
\item the primes dividing $Np$ split in $K$;
\item the analytic rank of $f^K$ is one;
\item there exists $\mathfrak{p}|p$ such that $\AJ_{f,K,p}$ is injective on $\Heeg^{\Gal(H_K/K)}_{K,N}$.
\end{itemize}
\end{itemize}
\end{assumption}

\begin{theorem} \label{pTNCzero}
If Assumption \ref{Ass3} is satisfied and the analytic rank of $f$ is zero then Conjecture \ref{pTNCconj} is true.
\end{theorem}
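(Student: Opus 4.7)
The plan is to deduce Theorem \ref{pTNCzero} from Wan's theorem (Theorem \ref{pTNC}) applied one prime $\mathfrak{p}$ of $\mathcal{O}_E$ above $p$ at a time, and then to reassemble the resulting $\mathfrak{p}$-adic valuation equalities into a single equality of fractional $\mathcal{O}_p$-ideals as demanded by Conjecture \ref{pTNCconj}. In analytic rank zero the motivic side collapses dramatically: assuming Conjectures \ref{conj1}--\ref{conj3}, the $p$-adic regulator identifies $H^1_{\mot}(\mathbb{Q},\mathcal{M})\otimes_E E_p$ with $\Sel_{\BK}(\mathbb{Q},V_p)$, and $L(f,k/2)\neq 0$ together with Theorem \ref{pTNC} forces $\Sel_{\BK}(\mathbb{Q},V_p)=0$; hence the algebraic rank is zero, the regulator $\Reg_\mathscr{B}(\mathcal{M})$ and the matrix $A_{\tilde{\mathscr{B}}}$ become trivial, and the divisible part of $\Sel_{\BK}(\mathbb{Q},A_p)$ vanishes, so
$$\Fitt_{\mathcal{O}_p}\bigl(\shaf_p^{\BK}(\mathbb{Q},\mathcal{M})\bigr)=\prod_{\mathfrak{p}\mid p}\bigl(\varpi_\mathfrak{p}^{\length_{\mathcal{O}_\mathfrak{p}}\Sel_{\BK}(\mathbb{Q},A_\mathfrak{p})}\bigr).$$

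Next I would verify that every $\mathfrak{p}\mid p$ satisfies the hypotheses of Theorem \ref{pTNC}. The condition $k<p$, the existence of $q'\parallel N$ with $a_{q'}(f)=-(q')^{k/2-1}$, the residual irreducibility of $T_\mathfrak{p}$ over $G_{\mathbb{Q}_p}$, and the inclusion $\SL_2(\mathbb{Z}_p)\subset \rho_{f,\mathfrak{p}}(G_\mathbb{Q})$ are all listed in Assumption \ref{Ass3}, while $L(f,k/2)\neq 0$ follows from the analytic rank zero assumption. Wan's theorem then yields, at each $\mathfrak{p}\mid p$,
$$v_{\varpi_\mathfrak{p}}\!\left(\frac{L(f,k/2)}{(2\pi i)^{k/2-1}\Omega_f}\right)=\length_{\mathcal{O}_\mathfrak{p}}\Sel_{\BK}(\mathbb{Q},A_\mathfrak{p})+\sum_{q\mid Np}t_q(\mathbb{Q},A_\mathfrak{p}),$$
where the canonical period $\Omega_f$ is built from a $\mathfrak{p}$-integral eigenclass in $H^1(\Gamma_0(N),\Sym^{k-2}(\mathcal{O}_\mathfrak{p}))$ as in \autoref{sec:4}.

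It remains to match the local and archimedean factors on the two sides. For $q\mid N$ the ideal $\Tam_q^p(\mathcal{M})$ is by definition $\bigoplus_{\mathfrak{p}\mid p}\Tam_q(\mathbb{Q},A_\mathfrak{p})$, so its valuation at $\mathfrak{p}$ is exactly $t_q(\mathbb{Q},A_\mathfrak{p})$. At $q=p$, since $p\nmid N$ each $V_\mathfrak{p}$ is crystalline and $\Tam_p(\mathbb{Q},A_\mathfrak{p})=\mathcal{O}_\mathfrak{p}$ by \cite[\S 4]{BB}, just as in the rank-one argument of \autoref{sec:6}. The remaining factors, namely the archimedean Tamagawa ideal $\Tam_\infty^p(\mathcal{M})=\Fitt_{\mathcal{O}_p}(H^1_\f(\mathbb{R},T_p))$, the torsion ideal $\Tors_p(\mathcal{M})$, the coefficient ideal $\mathcal{I}_p(\gamma_f)$, and the factor $(k/2-1)!/\Omega_\infty^{(\gamma_f)}$, must together reproduce the archimedean ratio $(2\pi i)^{k/2-1}\Omega_f$ on Wan's side; this identification is essentially a bookkeeping exercise traced through \cite[\S 2.23]{LV} and the Eichler--Shimura isomorphism recalled in \autoref{sec:4}. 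Gathering the $\mathfrak{p}$-adic equalities for all $\mathfrak{p}\mid p$ via the decomposition $\mathcal{O}_p=\bigoplus_{\mathfrak{p}\mid p}\mathcal{O}_\mathfrak{p}$ then assembles the desired fractional $\mathcal{O}_p$-ideal identity.

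The main obstacle I expect lies in this final bookkeeping: Wan's formula is naturally stated using the canonical period $\Omega_f$, whereas the Longo--Vigni formulation uses the Deligne period $\Omega_\infty^{(\gamma_f)}$ together with the explicitly inserted correction $\mathcal{I}_p(\gamma_f)/\Tors_p(\mathcal{M})$. Showing that these corrections exactly reconcile the two normalisations at each $\mathfrak{p}\mid p$ without introducing spurious prime-to-$p$ factors should reduce to the integrality properties of the Eichler--Shimura comparison and the description of the Betti and de Rham lattices in the realisations of $\mathcal{M}$ from \cite[\S 2]{LV}; I do not expect a genuinely new difficulty, but the verification is delicate and cannot be dispatched without care.
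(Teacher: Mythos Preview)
Your approach is essentially the same as the paper's: apply Wan's theorem at each $\mathfrak{p}\mid p$, reassemble into an $\mathcal{O}_p$-ideal identity, and check the remaining factors. The obstacle you flag at the end is not genuine: rather than arguing that $\mathcal{I}_p(\gamma_f)$, $\Tam_\infty^p(\mathcal{M})$, and $\Tors_p(\mathcal{M})$ \emph{combine} to reconcile the two period normalisations, the paper shows each of them is individually equal to $\mathcal{O}_p$ (by \cite[(4.33)]{LV}, \cite[(2.74)]{LV}, and \cite[Lemma 3.10]{LV3} with \cite[Proposition 3.21]{LV}, respectively), while the passage from $L(f,k/2)/\bigl((2\pi i)^{k/2-1}\Omega_f\bigr)$ to $L^*(\mathcal{M},0)/\Omega_\infty$ is dispatched by invoking the proof of \cite[Theorem 4.23]{LV}.
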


The $p$-part of Tamagawa number conjecture for modular motives of rank zero follows almost directly from Theorem \ref{pTNC}. Indeed, the result of Wan tells us that under Assumption \ref{Ass3} we have
$$v_\varpi \left(\frac{L(f,k/2)}{(2\pi i)^{k/2-1} \Omega_f} \right) =\length_{\mathcal{O}_\mathfrak{p}} \Sel_{\BK}(\mathbb{Q},A_\mathfrak{p}) + \displaystyle \sum_{q|Np} t_q(\mathbb{Q},A_\mathfrak{p}) $$
for any prime $\mathfrak{p}|p$. Then we have
$$\left( \frac{L(f,k/2)}{(2 \pi i)^{k/2-1} \Omega_f} \right)= \Fitt_{\mathcal{O}_\mathfrak{p}}(\Sel_{\BK}(\mathbb{Q},A_\mathfrak{p})) \cdot \displaystyle \prod_{q|Np} \Tam_q(\mathbb{Q},A_\mathfrak{p})$$
for any prime $\mathfrak{p}|p$ and by the proof of \cite[Theorem 4.23]{LV} we have
$$\left( \frac{L^*(\mathcal{M},0)}{\Omega_\infty} \right)=\Fitt_{\mathcal{O}_p}\left( \bigoplus_{\mathfrak{p}|p} \Sel_{\BK}(\mathbb{Q},A_\mathfrak{p})\right) \cdot \displaystyle \prod_{q|Np}\Tam_q^p(\mathcal{M}).$$
Thus, our result follows immediately since:
\begin{itemize}
\item $(k/2-1)! \in \mathcal{O}_p^\times$ since $k<p$;
\item $\Reg_\mathscr{B}(\mathcal{M})=1$ by \cite[Theorem 4.21]{LV};
\item $\shaf_p^{\BK}(\mathbb{Q},\mathcal{M})= \bigoplus_{\mathfrak{p}|p}\Sel_{\BK}(\mathbb{Q},A_\mathfrak{p})$ by \cite[Theorem 4.27]{LV};
\item $\mathcal{I}_p(\gamma_f)=\mathcal{O}_p$ by \cite[(4.33)]{LV};
\item $\Tam^p_\infty (\mathbb{Q},\mathcal{M})=\mathcal{O}_p$ by \cite[(2.74)]{LV};
\item $\det(A_{\tilde{\mathscr{B}}})=1$ by \cite[Theorem 4.21]{LV};
\item $\Tors_p(\mathcal{M})=\mathcal{O}_p$ by \cite[Lemma 3.10]{LV3} and \cite[Proposition 3.21]{LV}.
\end{itemize}

\subsection{Proof for modular motives of analytic rank one}

Tamagawa number conjecture for modular motives of analytic rank one holds true if we assume the following technical hypotheses.

\begin{assumption} \label{Ass4}~
\begin{itemize}
\item $N$ is square-free;
\item $p \nmid D_E \N(\mathfrak{a}_{f, \Gamma(N)}) c_E$ where $D_E$ is the discriminant of $E$ and $c_E:=[\mathcal{O}_E: \mathbb{Z}(\{a_n(f) \})]$;
\item $f$ is $p$-isolated (see \cite[Definition 3.5]{LV});
\item $p>k+1$ and $\left| (\mathbb{F}_p^\times)^{k-1} \right|>5$;
\item $\rho_{f,\mathfrak{p}}(G_\mathbb{Q})$ contains $\SL_2(\mathbb{Z}_p)$ for any $\mathfrak{p}|p$;
\item $\overline{\rho_{f,\mathfrak{p}}}$ is ramified at any prime $q|N$ for any $\mathfrak{p}|p$;
\item $\overline{\rho_{f,\mathfrak{p}}}$ is absolutely irreducible when restricted to the absolute Galois group of $\mathbb{Q}_p \left( \sqrt{p^*} \right)$ for any $\mathfrak{p}|p$ where $p^*=(-1)^{\frac{p-1}{2}}p$;
\item there exists $q' | N$ such that $a_{q'}(f)=-(q')^{k/2-1}$;
\item $\reg_{\mathbb{Q},p}^{\intt}$ is an isomorphism of $\mathcal{O}_p$-modules;
\item there exists an imaginary quadratic field $K$ of discriminant $D_K$ such that
\begin{itemize}
\item the primes dividing $Np$ split in $K$;
\item the analytic rank of $f^K$ is zero;
\item for any prime $\mathfrak{p}|p$ the map $\AJ_{f,K,p}$ is injective on $\Heeg_{K,N}^{\Gal(H_K/K)}$;
\item Conjecture \ref{conj3} holds true for the modular motive attached to $f^K$;
\end{itemize}
\item there exists an imaginary quadratic field $K'$ such that
\begin{itemize}
\item the primes dividing $NpD_K$ split in $K'$;
\item the analytic rank of $(f^K)^{K'}$ is one;
\item there exists a prime $\mathfrak{p}|p$ such that $\AJ_{f^K,K',p}$ is injective on $\Heeg^{\Gal(H_{K'}/K')}_{K',N}$.
\end{itemize}
\end{itemize}
\end{assumption}

\begin{theorem}
If Assumption \ref{Ass4} is satisfied and the analytic rank of $f$ is one then Conjecture \ref{pTNCconj} is true.
\end{theorem}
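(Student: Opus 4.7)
The plan is to reduce the rank-one case to two inputs already available: the rank-zero TNC of Theorem \ref{pTNCzero} applied to the quadratic twist $f^K$, and the full Kolyvagin conjecture of Theorem \ref{Kconj} applied to $f$ itself. Since Assumption \ref{Ass4} provides an imaginary quadratic field $K$ such that $f^K$ has analytic rank zero, together with the auxiliary field $K'$ for which $(f^K)^{K'}$ has analytic rank one and $\AJ_{f^K, K', p}$ is injective on Heegner cycles, I would first verify that $f^K$ satisfies the hypotheses of Theorem \ref{pTNCzero} (using that $\overline{\rho_{f^K}} \cong \overline{\rho_f} \otimes \epsilon_K$ and the splitting arguments already employed in \autoref{sec:6}). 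This yields a precise fractional $\mathcal{O}_p$-ideal equality for $L(f^K, k/2)/\bigl((2\pi i)^{k/2-1}\Omega_{f^K}\bigr)$ in terms of $\Fitt_{\mathcal{O}_p}$ of the Bloch--Kato Selmer group of $f^K$ and the product of its Tamagawa ideals.

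Next, I would invoke a higher-weight Gross--Zagier--Zhang formula over $K$ (available non-ordinarily since $p$ splits in $K$, $p \nmid 6N$, and the Heegner hypothesis on $N$ is in force) to express $L'(f/K, k/2)$, equivalently $L'(f, k/2)\cdot L(f^K, k/2)$, as a rational multiple of the Gillet--Soul\'e height of the projection to the $f$-isotypic component of the Heegner cycle class in $\CH_0^{k/2}(X_{N,k}/K)$. Assumption \ref{Ass4} guarantees $\AJ_{f,K,p}$ injective on $\Heeg^{\Gal(H_K/K)}_{K,N}$ and $\reg^{\intt}_{\mathbb{Q},p}$ an isomorphism, so the trace to $\mathbb{Q}$ of the Heegner class generates $H^1_\mot(\mathbb{Q}, \mathcal{M})$ modulo torsion; this identifies $\Reg_\mathscr{B}(\mathcal{M})$ with the height of this class up to a factor in $E^\times$ whose $\mathfrak{p}$-adic valuation can be read off from $\mathfrak{a}_{f,\Gamma(N)}$.

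The final piece is to control $\shaf^{\BK}_p(\mathbb{Q}, \mathcal{M})$ via the Heegner Selmer class. Kolyvagin's conjecture (Theorem \ref{Kconj}) supplies a finite vanishing order $\nu_f$, and Propositions \ref{triangulation} and \ref{parity} then give $\dim_{\mathbb{F}} \Sel^{\epsilon_{\nu_f}}_{\mathcal{F}(N^-)}(K, A_{f,1}) = \nu_f + 1$. Since the analytic rank-one hypothesis together with the injectivity of $\reg^{\intt}_{\mathbb{Q},p}$ forces the rank of $\Sel_{\BK}(\mathbb{Q}, V_p)$ to be exactly one, a standard descent argument produces $\nu_f = 0$, and the Kolyvagin derivative machinery in the form used in \cite{LPV} then bounds $\shaf^{\BK}_p(\mathbb{Q}, \mathcal{M})$ by the square of the index of the $p$-adic Abel--Jacobi image of the Heegner cycle in $\Sel_{\BK}(\mathbb{Q}, T_p)$. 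Substituting this bound -- which under our hypotheses is an equality -- into the Gross--Zagier formula and matching with the rank-zero TNC for $f^K$ produces the desired equality of fractional ideals up to multiplicative Tamagawa contributions at $q \mid N$ and at $p$; these are handled by the same local calculations as in \autoref{sec:6}, noting that $\Tam^p_p(\mathcal{M}) = \mathcal{O}_p$ since $V_p$ is crystalline at $p$.

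The main obstacle will be matching conventions between the three height theories involved: the archimedean Gillet--Soul\'e height entering $\Reg_\mathscr{B}(\mathcal{M})$, the $p$-adic Abel--Jacobi image used for Kolyvagin's classes in the non-ordinary signed framework of Lei--Loeffler--Zerbes, and the Bloch--Kato pairing controlling $\shaf^{\BK}_p$. A related delicate point is tracking $\mathfrak{p}$-adically the ratio between the canonical periods $\Omega_{f^K}, \Omega_f$ used in Wan's theorem and the motivic period $\Omega_\infty$ attached to $\gamma_f$; the non-ordinary crystalline input and the fact that $p \nmid c_E$ should let this comparison go through along the same lines as in the rank-zero case, but writing out the signed-Selmer-compatible version of the Kolyvagin bound for $\shaf$ is the step most likely to require genuinely new technical work.
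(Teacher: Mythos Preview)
Your proposal is essentially the paper's approach. The paper's entire proof is: follow Longo--Vigni \cite[\S 4.7--4.9]{LV} verbatim, noting that the only two places where ordinarity is used there are (i) Kolyvagin's conjecture for $f$ over $K$ and (ii) the rank-zero $p$-TNC for $f^K$, and these are supplied in the non-ordinary setting by Theorem \ref{Kconj} and Theorem \ref{pTNCzero} respectively. You have independently reconstructed the skeleton of the Longo--Vigni argument (Gross--Zagier--Zhang formula, Kolyvagin bound on $\shaf$, rank-zero TNC for the twist, period comparison) and correctly singled out the same two replacement inputs.

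The one place your proposal is more cautious than the paper is your closing worry that ``writing out the signed-Selmer-compatible version of the Kolyvagin bound for $\shaf$ is the step most likely to require genuinely new technical work.'' The paper sidesteps this: since $H^1_2(K_p,A_{f,n})=H^1_{\f}(K_p,A_{f,n})$ by \cite[Proposition 2.14]{HL} (cf.\ the proof of Lemma \ref{lkc}), the Selmer structures at $p$ agree with the Bloch--Kato ones, and the entire Kolyvagin-system and height machinery of \cite{LV} is already formulated in Bloch--Kato terms, so nothing in \cite[\S 4.7--4.9]{LV} needs to be rewritten. Your detour through $\nu_f=0$ via Propositions \ref{triangulation} and \ref{parity} is not how \cite{LV} proceeds (they use Kolyvagin's conjecture directly to feed the Euler-system bound on $\shaf$), but this is a difference of presentation rather than substance.
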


For the proof in the rank one case we refer to the one given by Longo and Vigni in \cite[$\mathsection 4.7, \mathsection 4.8, \mathsection 4.9$]{LV} for ordinary modular forms since it is essentially the same. The only critical steps are the following:
\begin{itemize}
\item in \cite[Theorem 4.32]{LV} they use Kolyvagin's conjecture for the ordinary modular form $f$ with respect to $K$, anyway we know that under Assumption $\ref{Ass4}$ it holds also when $f$ is non-ordinary (see Theorem \ref{Kconj});
\item in the proof of \cite[Theorem 4.41]{LV} they use the $p$-part of the Tamagawa number conjecture for the ordinary modular form $f^K$, anyway we know that under Assumption $\ref{Ass4}$  it holds also when $f^K$ is non-ordinary (see Theorem \ref{pTNCzero}).
\end{itemize}

\bibliographystyle{amsplain}
\bibliography{Database}

\providecommand{\bysame}{\leavevmode\hbox to3em{\hrulefill}\thinspace}
\providecommand{\MR}{\relax\ifhmode\unskip\space\fi MR }
\providecommand{\MRhref}[2]{%
  \href{http://www.ams.org/mathscinet-getitem?mr=#1}{#2}
}
\providecommand{\href}[2]{#2}
\begin{thebibliography}{10}

\bibitem{AB}
M.~Agarwal and J.~Brown, \emph{{O}n the {B}loch--{K}ato conjecture for elliptic
  modular forms of square-free level}, Math. Zeit. \textbf{276} (2014),
  889--924.

\bibitem{BB}
D.~Benois and L.~Berger, \emph{{T}h{\'e}orie d'{I}wasawa des
  repr{\'e}sentations cristallines {II}}, Comment. Math. Elv. \textbf{83}
  (2008), no.~3, 603--677.

\bibitem{Bes}
A.~Besser, \emph{{CM} cycles over {S}himura curves}, J. Algebraic Geom.
  \textbf{4} (1995), no.~4, 659--691.

\bibitem{Bes2}
\bysame, \emph{{O}n the finiteness of $\smallshaf$ for motives associated to
  modular forms}, Doc. Math. \textbf{2} (1997), 31--46.

\bibitem{BK}
S.~Bloch and K.~Kato, \emph{{L}-functions and {T}amagawa numbers of motives},
  The Grothendieck Festschrift, Springer, 1990, pp.~333--400.

\bibitem{Car}
H.~Carayol, \emph{{S}ur les repr{\'e}sentations $ l $-adiques associ{\'e}es aux
  formes modulaires de {H}ilbert}, Ann. Sci. {\'E}cole Norm. Sup. \textbf{19}
  (1986), no.~3, 409--468.

\bibitem{CL}
F.~Castella and M.~Longo, \emph{{B}ig {H}eegner points and special values of
  {L}-series}, Annales math{\'e}matiques du Qu{\'e}bec \textbf{40} (2016),
  303--324.

\bibitem{CHI}
M.~Chida, \emph{{S}elmer groups and central values of {L}-functions of modular
  forms}, Ann. Inst. Fourier (Grenoble) \textbf{67} (2017), no.~3, 1231--1276.

\bibitem{CH}
M.~Chida and M.-L. Hsieh, \emph{{A}nticyclotomic {I}wasawa main conjecture for
  modular forms}, Compos. Math. \textbf{151} (2018), no.~5, 863--897.

\bibitem{CH2}
\bysame, \emph{{S}pecial values of anticyclotomic {L}-functions for modular
  forms}, J. Reine Anjew. Math. \textbf{741} (2018).

\bibitem{EV}
Y.~Elias and C.~de~Vera-Piquero, \emph{{CM} cycles on {K}uga--{S}ato varieties
  over {S}himura curves and {S}elmer groups}, Forum Math. \textbf{30} (2018),
  no.~2, 321--346.

\bibitem{G}
B.~H. Gross, \emph{{K}olyvagin's work on modular elliptic curves}, L-functions
  and arithmetic (Durham, 1989) \textbf{153} (1991), 235--256.

\bibitem{HL}
J.~Hatley and A.~Lei, \emph{{A}rithmetic properties of signed {S}elmer groups
  at non-ordinary primes}, Ann. Inst. Fourier \textbf{69} (2019), no.~3,
  1259--1294.

\bibitem{HB}
H.~Hida, \emph{{M}odular forms and {G}alois cohomology}, Cambridge University
  Press, 2000.

\bibitem{H}
B.~Howard, \emph{{B}ipartite {E}uler systems}, J. Reine Anjew. Math.
  \textbf{597} (2006), 1--25.

\bibitem{K}
V.~A. Kolyvagin, \emph{{E}uler systems}, The Grothendieck Festschrift,
  Birkh{\"a}user, 1990, pp.~435--483.

\bibitem{LLZ}
A.~Lei, D.~Loeffler, and S.~Zerbes, \emph{{W}ach modules and {I}wasawa theory
  for modular forms}, Asian J. Math \textbf{14} (2010), no.~4, 475--528.

\bibitem{LMZ}
A.~Lei, L.~Mastella, and L.~Zhao, \emph{{O}n the structure of the
  {B}loch-{K}ato {S}elmer groups of modular forms over anticyclotomic
  $\mathbb{Z}_p$-towers}, arXiv:2409.11966v1.

\bibitem{LPV}
M.~Longo, M.~R. Pati, and S.~Vigni, \emph{Kolyvagin's conjecture for modular
  forms}, arXiv:2412.02303v2.

\bibitem{LV}
M.~Longo and S.~Vigni, \emph{{T}he {T}amagawa number conjecture and
  {K}olyvagin's conjecture for motives of modular forms}, arXiv:2211.04907v3.

\bibitem{LV2}
\bysame, \emph{{V}anishing of special values and central derivatives in {H}ida
  families}, Ann. Sc. Norm. Sup. Pisa Cl. Sci. \textbf{13} (2014), no.~3,
  859--888.

\bibitem{LV3}
\bysame, \emph{{A} refined {B}eilinson--{B}loch conjecture for motives of
  modular forms}, Trans. Amer. Math. Soc. \textbf{369} (2017), no.~10,
  7301--7342.

\bibitem{Mas}
D.~Masoero, \emph{{O}n the structure of {S}elmer and {S}hafarevich--{T}ate
  groups of even weight modular forms}, Trans. Amer. Math. Soc. \textbf{371}
  (2019), no.~12, 8381--8404.

\bibitem{M}
J.S. Milne, \emph{{C}lass {F}ield {T}heory}, available at
  \url{https://www.jmilne.org/math/CourseNotes/cft.html}.

\bibitem{N}
J.~Nekov\'a\v{r}, \emph{{K}olyvagin's method for {C}how groups of
  {K}uga--{S}ato varieties}, Invent. Math. \textbf{107} (1992), no.~1, 99--125.

\bibitem{RIB}
K.~A. Ribet, \emph{On l-adic representations attached to modular forms},
  Invent. Math. \textbf{28} (1975), no.~3, 245--275.

\bibitem{ES}
K.~Rubin, \emph{{E}uler systems}, no. 147, Princeton University Press, 2000.

\bibitem{V}
V.~Vatsal, \emph{{C}anonical periods and congruence formulae},  (1999).

\bibitem{WAN}
X.~Wan, \emph{{I}wasawa main conjecture for non-ordinary modular forms},
  arXiv:1607.07729v7.

\bibitem{W}
H.~Wang, \emph{{I}ndivisibility of {H}eegner cycles over {S}himura curves and
  {S}elmer groups}, J. Inst. Math Jussieu \textbf{22} (2023), no.~5,
  2297--2336.

\bibitem{Z}
W.~Zhang, \emph{{S}elmer groups and the indivisibility of {H}eegner points},
  Camb. J. Math. \textbf{2} (2014), no.~2, 191--253.

\end{thebibliography}

\end{document}